\documentclass[a4paper,12pt]{article}



\makeatletter
\@addtoreset{footnote}{page}
\makeatother


\usepackage{style-enumitem}

\usepackage{amsthm}
\usepackage{amsmath,amssymb,latexsym,amsfonts,mathrsfs}


\newcommand{\eps}{\ensuremath{\varepsilon}}

\renewcommand{\tilde}{\widetilde}



 %


\newcommand{\bN}{\ensuremath{\mathbb{N}}}

\newcommand{\bP}{\ensuremath{\mathbb{P}}}

\newcommand{\bR}{\ensuremath{\mathbb{R}}}

\newcommand{\cL}{\ensuremath{\mathcal{L}}}

\newcommand{\cP}{\ensuremath{\mathcal{P}}}

\theoremstyle{plain}
\newtheorem{Thm}{Theorem}[section]

\newtheorem{Lem}[Thm]{Lemma}
\newtheorem{Prop}[Thm]{Proposition}

\theoremstyle{definition}

\newtheorem{Rem}[Thm]{Remark}

\setlength\topmargin{0mm}
\setlength\headheight{0mm}
\setlength\headsep{0mm}
\setlength\topskip{0mm}
\setlength\textheight{230mm}
\setlength\footskip{20mm}

\setlength\oddsidemargin{0mm}
\setlength\evensidemargin{0mm}
\setlength\textwidth{160mm}

\setlength\parindent{5mm}
\setlength\parskip{3mm}

\pagestyle{plain}

\numberwithin{equation}{section}

\makeatletter
\renewcommand\section{\@startsection {section}{1}{\z@}%
	{-3.5ex \@plus -1ex \@minus -.2ex}%
	{2.3ex \@plus.2ex}%
	{\normalfont\large\bf}}
\makeatother

\makeatletter
\renewcommand\subsection{\@startsection {subsection}{1}{\z@}%
	{-3.5ex \@plus -1ex \@minus -.2ex}%
	{2.3ex \@plus.2ex}%
	{\normalfont\normalsize\bf}}
\makeatother

\usepackage{color}

\begin{document}
	
	\begin{center}
		{\Large \bf 
			A unifying approach to non-minimal
			quasi-stationary distributions for one-dimensional
			diffusions}
	\end{center}
	\begin{center}
		Kosuke Yamato\footnote{email: kyamato@math.kyoto-u.ac.jp} (Kyoto University)
	\end{center}
	\begin{center}
		{\small \today}
	\end{center}

	\begin{abstract}
		Convergence to non-minimal quasi-stationary distributions for one-dimensional diffusions is studied.
		We give a method of reducing the convergence to the tail behavior of the lifetime via a property which we call the first hitting uniqueness.
		We apply the results to Kummer diffusions with negative drifts 
		and give a class of initial distributions converging to each non-minimal quasi-stationary distribution.
	\end{abstract}

	\section{Introduction}
	Let us consider a one-dimensional diffusion $X = (X_t)_{t \geq 0}$ on $I = [0,b) \ \text{or} \ [0,b] \ (0 < b \leq \infty)$ killed nowhere and stopped upon hitting $0$ and let $T_0$ denote its first hitting time of $0$. 
	A probability distribution $\nu$ on $I \setminus \{0\}$ is called a {\it quasi-stationary distribution} of $X$ when the distribution of $X_t$ with the initial distribution $\nu$ conditioned to be away from $0$ until time $t$ is time-invariant, that is, the following holds:
	\begin{align}
		\bP_\nu[X_t \in dx \mid T_0 > t] = \nu(dx) \quad (t > 0), \label{}
	\end{align}
	where $\bP_{\nu}$ denotes the underlying probability measure of $X$ with its  initial distribution $\nu$.
	For a choice of a quasi-stationary distribution $\nu$, we study a sufficient condition on an initial distribution $\mu$ such that
	\begin{align}
	\mu_{t}(dx) := \bP_{\mu}[X_t \in dx \mid T_0 > t]  \xrightarrow[t \to \infty]{} \nu(dx), \label{eq35}
	\end{align} 
	where the convergence is the weak convergence of probability distributions.
	In the case where $\mu$ is compactly supported, the convergence \eqref{eq35} has been studied by many authors (e.g., Hening and Kolb \cite{Hening}, Kolb and Steinsaltz \cite{Kolb}, Littin \cite{Littin} and Mandl \cite{Mandl}), and it has been shown that under very general conditions the convergence \eqref{eq35} holds and the limit distribution $\nu$ does not depend on the choice of a compactly supported $\mu$. The limit measure $\nu$ is sometimes called Yaglom limit or the {\it minimal quasi-stationary distribution}. 
	On the other hand, for some diffusions there exists infinitely many quasi-stationary distributions. Although it is a natural problem to consider for what initial distributions the convergence \eqref{eq35} holds for each quasi-stationary distribution $\nu$, there are very few studies considering this problem for non-minimal quasi-stationary distributions.
	The author only knows two papers: Lladser and San Mart\'{\i}n \cite{DoAofOU} and Martinez, Picco and San Martin \cite{DoAofBM}, whose results we generalize in the present paper.
	
	In the present paper, we give a method of reducing the convergence \eqref{eq35} to the tail behavior of $T_0$. 
	For a class $\cP$ of initial distributions, we say that the {\it first hitting uniqueness} holds on $\cP$ if 
	\begin{align}
		\text{the map} \quad \cP \ni \mu \longmapsto \bP_{\mu}[T_0 \in dt] \quad 
		\text{is injective.} \label{}
	\end{align}
	As the class $\cP$, we shall take
	\begin{align}
	\cP_{\mathrm{exp}} = \{ \mu \in \cP(I) \mid \bP_{\mu}[T_0 \in dt] = \lambda \mathrm{e}^{-\lambda t}dt \quad (\lambda > 0) \}, \label{}
	\end{align}
	the set of initial distributions with exponential hitting probabilities,
	where $\cP(I)$ denotes the set of probability distributions on $I$.
	We refer to Rogers \cite{RogersFPP} as a general study of the first hitting uniqueness.
	
	One of our main results is a general result to reduce the convergence \eqref{eq35} to the tail behavior of $T_0$, provided that the first hitting uniqueness holds on $\cP_{\mathrm{exp}}$:
	\begin{Thm}\label{main-theorem-03}
		Let $X$ be a $\frac{d}{dm}\frac{d}{ds}$-diffusion on $[0,b) \ (0 < b  \leq \infty)$ and set
		\begin{align}
			\mu_{t}(dx) = \bP_{\mu}[X_t \in dx \mid T_0 > t]. \label{}
		\end{align}
		Assume the first hitting uniqueness holds on $\cP_{\mathrm{exp}}$ and
		\begin{align}
			\bP_{\nu_\lambda}[T_0 \in dt] = \lambda \mathrm{e}^{-\lambda t}dt \quad \text{for some}
			\ \lambda > 0 \ \text{and some} \ \nu_{\lambda} \in \cP(0,b).
		\end{align}
		Then for $\mu \in \cP[0,b)$ and $\lambda > 0$, the following are equivalent:
		\begin{enumerate}
			\item $
			\lim_{t \to \infty}\frac{\bP_{\mu}[T_0 > t + s]}{\bP_{\mu}[T_0 > t]} = \mathrm{e}^{-\lambda s} \ (s > 0)$.
			\item $\bP_{\mu_{t}}[T_0 \in ds] \xrightarrow[t \to \infty]{} \lambda \mathrm{e}^{-\lambda s}ds$.
			\item $\mu_t \xrightarrow
			[t \to \infty]{} \nu_\lambda$.
		\end{enumerate}
	\end{Thm}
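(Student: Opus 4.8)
The plan is to base everything on the elementary Markov identity
\begin{align*}
\frac{\bP_\mu[T_0 > t+s]}{\bP_\mu[T_0 > t]} = \bP_{\mu_t}[T_0 > s] \qquad (s,t>0),
\end{align*}
which follows by conditioning on $\cF_t$ and applying the Markov property at the deterministic time $t$, together with the definition of $\mu_t$ as the law of $X_t$ under $\bP_\mu[\,\cdot \mid T_0 > t]$. Once this is in hand, all three conditions become statements about the family of hitting laws $\bP_{\mu_t}[T_0 \in ds]$ and the measures $\mu_t$, and I would organize the argument as (i)$\Leftrightarrow$(ii), (iii)$\Rightarrow$(ii), and (ii)$\Rightarrow$(iii).

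For (i)$\Leftrightarrow$(ii): the left-hand side of the identity is precisely the survival function $s \mapsto \bP_{\mu_t}[T_0 > s]$, so (i) asserts its pointwise convergence to $\mathrm{e}^{-\lambda s}$. Since $\mathrm{e}^{-\lambda s}$ is the everywhere-continuous survival function of $\lambda\mathrm{e}^{-\lambda s}ds$ (a genuine probability law, with no mass at $0$ or escaping to $\infty$), pointwise convergence of survival functions to it is equivalent to weak convergence of the associated distributions, which is (ii). For (iii)$\Rightarrow$(ii): writing $g_s(x) := \bP_x[T_0 > s]$, the function $g_s$ is bounded and continuous on $[0,b)$, so weak convergence $\mu_t \to \nu_\lambda$ gives $\bP_{\mu_t}[T_0 > s] = \int g_s\,d\mu_t \to \int g_s\,d\nu_\lambda = \mathrm{e}^{-\lambda s}$, and then (ii) follows by the same survival-function criterion.

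The substantive implication is (ii)$\Rightarrow$(iii), where the first hitting uniqueness enters. I would argue by compactness: regard the $\mu_t$ as probability measures on the compactification $[0,b]$ and extract a subsequential weak limit $\mu_\infty = \mu_\infty' + \theta\,\delta_b$, where $\theta$ is the mass that has escaped to the boundary. Each $g_s$ extends continuously to $[0,b]$, since it is nondecreasing in $x$ (to reach $0$ from a larger starting point the continuous path must first cross any smaller point, by the strong Markov property) and hence has a limit at $b$; under the inaccessibility of $b$ this limit is $1$, i.e.\ $g_s(b)=1$. Passing to the limit in (ii) then yields $\int_{[0,b)} g_s\,d\mu_\infty' + \theta = \mathrm{e}^{-\lambda s}$ for all $s>0$; letting $s\to\infty$, the integral tends to a nonnegative limit while the right-hand side tends to $0$, forcing $\theta = 0$. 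Thus $\mu_\infty$ is a probability measure charging neither $b$ nor (since $\bP_{\mu_\infty}[T_0 > s]\to 1$ as $s\downarrow 0$) the point $0$, and it satisfies $\bP_{\mu_\infty}[T_0 \in ds] = \lambda\mathrm{e}^{-\lambda s}ds$, so $\mu_\infty \in \cP_{\mathrm{exp}}$. As $\nu_\lambda$ lies in $\cP_{\mathrm{exp}}$ with the same exponential hitting law, the first hitting uniqueness forces $\mu_\infty = \nu_\lambda$; since every subsequential limit equals $\nu_\lambda$, the full family converges, giving (iii).

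The hard part will be the tightness step inside (ii)$\Rightarrow$(iii), namely ruling out escape of mass to $b$. Everything hinges on the boundary regularity $g_s(b)=1$, equivalently $\lim_{x\to b}\bP_x[T_0 \le s]=0$, which I would reduce via $T_0 \ge T_c$ for starts above a fixed level $c$ to $\lim_{x\to b}\bP_x[T_c \le s] = 0$ and justify from the inaccessibility of $b$ in the standing assumptions. If mass could reach $0$ quickly from near $b$, the monotone-limit argument would give $g_s(b)<1$ and the identification of $\theta$ would break, so this is the one place where the boundary behavior of the diffusion must genuinely be used.
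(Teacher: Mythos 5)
Your proposal is correct and follows essentially the same route as the paper: the Markov identity $\bP_{\mu_t}[T_0>s]=\bP_\mu[T_0>t+s]/\bP_\mu[T_0>t]$, the elementary equivalences (i)$\Leftrightarrow$(ii) and (iii)$\Rightarrow$(ii), and then compactness of $\cP[0,b]$ on the compactification, continuity of $x\mapsto\bP_x[T_0>s]$ up to the natural boundary $b$, exclusion of limit mass at $0$ and $b$, and identification of the subsequential limit via first hitting uniqueness. Your way of killing the mass $\theta$ at $b$ (letting $s\to\infty$ in $\int g_s\,d\mu_\infty'+\theta=\mathrm{e}^{-\lambda s}$ and using only nonnegativity of the integral) is a marginally more economical version of the paper's use of $\lim_{t\to\infty}\bP_x[T_0>t]=1\{x=b\}$ with dominated convergence, but the argument is the same in substance.
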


	To study concrete sufficient conditions for the convergence \eqref{eq35},
	we introduce the class of {\it Kummer diffusions} with negative drifts.
	A Kummer diffusion $Y^{(0)} = Y^{(\alpha,\beta)}\ (\alpha > 0,\ \beta \in \bR)$ is a diffusion on $[0,\infty)$ stopped upon hitting $0$ whose local generator $\cL^{(0)} = \cL^{(\alpha,\beta)}$ on $(0,\infty)$ is
	\begin{align}
	\cL^{(0)} = \cL^{(\alpha,\beta)} &= x\frac{d^2}{dx^2} + (-\alpha + 1 - \beta x) \frac{d}{dx}. \label{eq5}
	\end{align}
	Note that the process $Y^{(0)} = Y^{(\alpha,\beta)}$ is also called a {\it radial Ornstein-Uhlenbeck process} in some literature (see e.g., \cite{BMhandbook} and \cite{Yor:Besselgen}).
	Write
	\begin{align}
	g_\gamma(x) := \bP_{x}[\mathrm{e}^{-\gamma T_0^{(0)}}] \quad (\gamma \geq 0), \label{eq75}
	\end{align}
	which is the Laplace transform of the first hitting time of $0$ for $Y^{(0)} = Y^{(\alpha,\beta)}$.
	Then $g_\gamma$ is a $\gamma$-eigenfunction for $\cL^{(0)}$, i.e., $\cL^{(0)} g_\gamma = \gamma g_\gamma$ (see e.g., \cite[p.292]{RogersWilliams2}).
	We define a Kummer diffusion with a negative drift $Y^{(\gamma)} = Y^{(\alpha,\beta,\gamma)} \ (\gamma \geq 0)$ as the $h$-transform of $Y^{(\alpha,\beta)}$ by the function $g_\gamma$, that is,
	the process $Y^{(\alpha,\beta,\gamma)}$ is a diffusion on $[0,\infty)$ stopped at $0$ whose local generator on $(0,\infty)$ is
	\begin{align}
	\cL^{(\gamma)} = \cL^{(\alpha,\beta,\gamma)} =  \frac{1}{g_\gamma}(\cL^{(0)} - \gamma) g_\gamma. \label{eq74}
	\end{align}
	
	If we write
	\begin{align}
	\tilde{Y}^{(\alpha,\beta,\gamma)} := \sqrt{2Y^{(\alpha,\beta,\gamma)}}, \label{}
	\end{align}
	then the local generator $\tilde{\cL}^{(\alpha,\beta,\gamma)}$ of $\tilde{Y}^{(\alpha,\beta,\gamma)}$ on $(0,\infty)$ is given as
	\begin{align}
	\tilde{\cL}^{(\alpha,\beta,\gamma)} 
	= \frac{1}{2}\frac{d^2}{dx^2} + \left( \frac{1-2\alpha}{2x} - \frac{\beta x}{2}  + \frac{\tilde{g}_\gamma'}{\tilde{g}_\gamma} \right)\frac{d}{dx}, \label{eq47}
	\end{align}
	where $\tilde{g}_\gamma(x) = \tilde{\bP}_x[\mathrm{e}^{-\gamma \tilde{T}_0}]$
	denotes the Laplace transform of the first hitting time of $0$ for $\tilde{Y}^{(0)}$ starting from $x$.
	When $\alpha = 1/2$ and $\gamma = 0$, the process $\tilde{Y}^{(1/2,\beta,0)}$ is the Ornstein-Uhlenbeck process and, when $\beta = 0$, the process $\tilde{Y}^{(\alpha,0,\gamma)}$ is the Bessel process with a negative drift (see e.g., \cite{Yor:Besselgen}).
	
	In Theorem \ref{QSD-char01}, we will see that for $\frac{d}{dm}\frac{d}{ds}$-diffusions satisfying a certain condition, existence of non-minimal quasi-stationary distributions is characterized by $\lambda_0 > 0$, where $\lambda_0 \ (\geq 0)$ denotes the spectral bottom, i.e., the bottom of $L^2$-spectrum of $-\frac{d}{dm}\frac{d}{ds}$ with Dirichlet boundary condition. 
	When non-minimal quasi-stationary distributions exist, they are parametrized by $\lambda \in (0,\lambda_0]$ and we denote the quasi-stationary distribution by $\nu_{\lambda}$.
	Note that there is an order such that $\nu_{\lambda'} \preceq \nu_{\lambda}$ for $0 < \lambda \leq \lambda' \leq \lambda_0$, so that $\nu_{\lambda_0}$ is the minimal quasi-stationary distribution
	(see Section \ref{section:preliminary} for the details).
	
	We classify $Y^{(\gamma)} = Y^{(\alpha,\beta,\gamma)} \ (\alpha > 0, \ \beta \in \bR, \ \gamma \geq 0)$ into the following five cases by $\beta$ and $\gamma$:
		
	\begin{align}
		\begin{aligned}
		&\text{Case 1:} & \beta = 0, \quad \gamma > 0. \\
		&\text{Case 2:} & \beta > 0, \quad \gamma \geq 0. \\
		&\text{Case 3:} & \beta < 0, \quad \gamma > 0. \\
		&\text{Case 1':} & \beta = 0, \quad \gamma = 0. \\
		&\text{Case 3':} & \beta < 0, \quad \gamma = 0. 
		\end{aligned} \label{cases}
	\end{align}
	
	We will see in Proposition \ref{NSCforQSDforKum} that the process $Y^{(\gamma)} = Y^{(\alpha,\beta,\gamma)} \ (\alpha > 0, \ \beta \in \bR, \ \gamma \geq 0)$ has infinitely many quasi-stationary distributions if and only if one of the Case 1-3 holds.
	
	The following is another main result of the present paper, where $L^1(I,\nu)$ denotes the set of integrable functions on $I$ w.r.t.\ the measure $\nu$:
	\begin{Thm} \label{thm:doaofKum}
		Let $X = Y^{(\gamma)} = Y^{(\alpha,\beta,\gamma)} \ (\alpha > 0, \ \beta \in \bR, \ \gamma \geq 0)$ satisfying one of the Case 1-3 in \eqref{cases} and let $\mu \in \cP(0,\infty)$.
		Then the following holds:
		\begin{enumerate}
			\item If the Case 1 holds and $\mu(dx) = \rho(x)dx$ for some $\rho \in L^1((0,\infty),dx)$ and 
			\begin{align}
			\log \rho(x) \sim (\delta - 2\sqrt{\gamma})\sqrt{x} \quad (x \to \infty) \label{eq52}
			\end{align}
			for some $0 < \delta < 2\sqrt{\gamma}$, then it holds
			\begin{align}
			\mu_{t} \xrightarrow[t \to \infty]{} \nu_\lambda \label{eq53}
			\end{align}
			with $\lambda = \gamma - \delta^2/4 \in (0,\lambda^{(\gamma)}_0)$, where $\lambda_0^{(\gamma)} = \gamma > 0$ is the spectral bottom.
			\item If the Case 2 holds and
			\begin{align}
			\mu(x,\infty) \sim x^{-\alpha - \gamma / \beta + \delta} \ell(x) \quad (x \to \infty) \label{eq61}
			\end{align}
			for some $0 < \delta <\alpha +  \gamma / \beta$ and some slowly varying function $\ell$ at $\infty$,
			then it holds
			\begin{align}
			\mu_{t} \xrightarrow[t \to \infty]{} \nu_\lambda \label{}
			\end{align}
			with $\lambda = \beta(\alpha - \delta) + \gamma \in (0,\lambda^{(\gamma)}_0)$, where $\lambda^{(\gamma)}_0 = \alpha\beta + \gamma > 0$ is the spectral bottom.
			\item If the Case 3 holds and
			\begin{align}
				\mu(x,\infty) \sim x^{-1 + \gamma / \beta + \delta}\ell(x) \quad (x \to \infty) \label{}
			\end{align}
			for some $0 < \delta < 1 - \gamma / \beta$ and some slowly varying function $\ell$ at $\infty$. then it holds
			\begin{align}
				\mu_{t} \xrightarrow[t \to \infty]{} \nu_\lambda \label{}
			\end{align}
			with $\lambda = -\beta(1 - \delta) + \gamma \in (0,\lambda^{(\gamma)}_0)$, where $\lambda^{(\gamma)}_0 = -\beta + \gamma > 0$ is the spectral bottom.

		\end{enumerate}
	\end{Thm}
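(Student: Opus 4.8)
The plan is to derive all of Cases 1--3 from Theorem \ref{main-theorem-03}, reducing the asserted convergence to the tail behaviour of the lifetime $T_0$. I would first record that the hypotheses of Theorem \ref{main-theorem-03} are in force for $X = Y^{(\alpha,\beta,\gamma)}$ in each of these cases: the first hitting uniqueness on $\cP_{\mathrm{exp}}$ holds (by the general theory underlying Proposition \ref{NSCforQSDforKum}), and for each $\lambda \in (0,\lambda_0^{(\gamma)})$ there is a non-minimal quasi-stationary distribution $\nu_\lambda \in \cP(0,\infty)$ (Proposition \ref{NSCforQSDforKum} and Theorem \ref{QSD-char01}) whose hitting time is exponential, $\bP_{\nu_\lambda}[T_0 \in dt] = \lambda \mathrm{e}^{-\lambda t}\,dt$, by the defining property of quasi-stationary distributions (Section \ref{section:preliminary}). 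Granting this, it suffices to verify condition (i) of Theorem \ref{main-theorem-03}, namely $\bP_\mu[T_0 > t+s]/\bP_\mu[T_0 > t] \to \mathrm{e}^{-\lambda s}$ for every $s > 0$, with the value of $\lambda$ prescribed in each case; condition (iii), the desired convergence $\mu_t \to \nu_\lambda$, then follows.

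Next I would make the lifetime explicit through its Laplace transform. Since $Y^{(\gamma)}$ is the $g_\gamma$-transform of $Y^{(0)}$, $g_\gamma$ is a $\gamma$-eigenfunction, and $g_\gamma(0) = 1$, the standard change of measure for $h$-transforms gives, for the hitting time $T_0$ of $Y^{(\gamma)}$,
\begin{align}
\bP_x[\mathrm{e}^{-q T_0}] = \frac{g_{\gamma+q}(x)}{g_\gamma(x)}, \qquad \phi(q) := \bP_\mu[\mathrm{e}^{-q T_0}] = \int_0^\infty \frac{g_{\gamma+q}(x)}{g_\gamma(x)}\,\mu(dx).
\end{align}
The location of the left edge of the domain of convergence of $\phi$, and the rate of blow-up there, are governed by the large-$x$ asymptotics of the confluent hypergeometric (Kummer) transforms $g_s$ of the base diffusion together with the prescribed tail of $\mu$. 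In Case 1 ($\beta = 0$) one has $\log g_s(x) \sim -2\sqrt{sx}$, so under \eqref{eq52} the integrand behaves like $\exp\{(\delta - 2\sqrt{\gamma+q})\sqrt{x}\}$ and $\phi(q) < \infty$ precisely when $q > \delta^2/4 - \gamma = -\lambda$; in Cases 2 and 3 ($\beta \neq 0$) the transforms $g_s$ decay like powers of $x$, matching the power-law tails \eqref{eq61} etc., and the abscissa of convergence is again at $q = -\lambda$ with $\lambda$ as stated. Splitting $\phi = \int_0^A + \int_A^\infty$, the first part is bounded near $q = -\lambda$ while the second, through the regularly varying tail of $\mu$ and Karamata's theorem, gives $\phi(-\lambda + \eps) \sim \eps^{-\kappa}\,\tilde\ell(1/\eps)$ as $\eps \downarrow 0$ for an explicit index $\kappa \geq 1$ and a slowly varying $\tilde\ell$ built from $\ell$ (with $\kappa = 2$, $\tilde\ell$ constant, in Case 1).

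I would then close the argument with a Tauberian step. Writing $\bar F(t) = \bP_\mu[T_0 > t]$, the substitution $t = \log x$ shows that condition (i) is equivalent to the non-increasing function $x \mapsto \bar F(\log x)$ being regularly varying at $\infty$ with index $-\lambda$. Now
\begin{align}
\int_0^\infty \mathrm{e}^{(\lambda - \eps) t}\,\bar F(t)\,dt = \frac{\phi(-\lambda + \eps) - 1}{\lambda - \eps} \sim \frac{1}{\lambda}\,\eps^{-\kappa}\,\tilde\ell(1/\eps) \quad (\eps \downarrow 0),
\end{align}
and the left-hand side is, after the same substitution, the Mellin transform of the monotone function $x \mapsto \bar F(\log x)$. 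Karamata's Tauberian theorem for monotone functions therefore yields the sought regular variation, hence condition (i) with the required $\lambda$, and Theorem \ref{main-theorem-03} gives $\mu_t \to \nu_\lambda$.

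The technical heart, and the step I expect to be the main obstacle, is transporting the regular variation of the spatial tail of $\mu$ through the Kummer-function asymptotics of $g_s$ and through the limit $\eps \downarrow 0$ in a uniform way: this requires Potter-type bounds to justify the interchange of limit and integral in the split $\int_0^A + \int_A^\infty$, and care in reading off the index $\kappa$ and the slowly varying factor $\tilde\ell$ from the data $(\delta,\ell)$ in each case. Subsidiary points are to confirm that the behaviour of $g_\gamma$ near $0$ and the contribution of small $x$ are harmless, and to justify the hitting-time identity together with the standing hypotheses of Theorem \ref{main-theorem-03} (first hitting uniqueness and the exponential law of $T_0$ under $\nu_\lambda$), so that the reduction is legitimate.
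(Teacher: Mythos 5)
Your reduction to condition (i) of Theorem \ref{main-theorem-03} and the $h$-transform identity $\bP_x[\mathrm{e}^{-qT_0}] = g_{\gamma+q}(x)/g_\gamma(x)$ are both correct, and at this level your plan coincides with the paper's. However, the two analytic steps that carry the load --- the claimed blow-up $\phi(-\lambda+\eps)\sim\eps^{-\kappa}\tilde\ell(1/\eps)$ and the ``Karamata Tauberian theorem for monotone functions'' applied at the critical abscissa --- both break down. First, in Case 1 the Abelian step already fails: hypothesis \eqref{eq52} fixes $\log\rho$ only to leading order, so for instance $\rho(x)=\exp\{(\delta-2\sqrt{\gamma})\sqrt{x}+x^{1/4}\}$ is admissible; a Laplace-method computation (maximize $-(2\eps/\delta)\sqrt{x}+x^{1/4}$) then gives $\log\phi(-\lambda+\eps)\asymp 1/\eps$, so $\phi(-\lambda+\eps)$ is not regularly varying in $1/\eps$ for any choice of $\kappa$ and $\tilde\ell$. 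Second, and fatally in all three cases, the Tauberian step is invalid: monotonicity of $\bar F(t)=\bP_\mu[T_0>t]$ together with $\int_0^\infty \mathrm{e}^{(\lambda-\eps)t}\bar F(t)\,dt\sim\eps^{-\kappa}\tilde\ell(1/\eps)$ does \emph{not} imply $\bar F(t+s)/\bar F(t)\to\mathrm{e}^{-\lambda s}$. Karamata's theorem, applied to the positive measure $\mathrm{e}^{\lambda t}\bar F(t)\,dt$, yields only the regular variation of the integrated function $W(t)=\int_0^t\mathrm{e}^{\lambda u}\bar F(u)\,du$; descending to the integrand would require $\mathrm{e}^{\lambda u}\bar F(u)$ to be eventually monotone, which it need not be, and the monotonicity of $\bar F\circ\log$ that you invoke acts at the wrong scale (it gives only the trivial bound that the ratio is $\leq 1$). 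Concretely, $\bar F(t)=\mathrm{e}^{-\lambda t}\left(1+\tfrac12\sin t\right)$ with $\lambda\geq 1$ is non-increasing and satisfies $\int_0^\infty\mathrm{e}^{(\lambda-\eps)t}\bar F(t)\,dt=\eps^{-1}+\tfrac12(1+\eps^2)^{-1}\sim\eps^{-1}$, yet $\bar F(t+s)/\bar F(t)$ oscillates and has no limit. This is precisely the pitfall the paper flags in Remark \ref{counterexample}: information at the transform (or logarithmic) level near the critical abscissa is too crude to control the tail ratio.

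The paper avoids both problems by working one level down, with the hitting density $f_\mu$ rather than with $\bar F$ or its transform. In Case 1 it verifies the log-derivative condition \eqref{eq25} of Proposition \ref{non-minimalQSD_density} --- which, via Lamperti's theorem and L'H\^opital, genuinely does imply condition (i) --- using the localization Lemma \ref{laplaceprinciple}, a statement that is robust to sub-$\sqrt{x}$ corrections in $\log\rho$. In Cases 2 and 3 it proves regular variation of $t\mapsto f_\mu(\log t)$ by applying Karamata's Tauberian theorem in its legitimate setting: the Laplace transform $\int_0^\infty x^\alpha g_\gamma(x)^{-1}\mathrm{e}^{-sx}\mu(dx)$ as $s\to 0$, paired with the non-decreasing function $x\mapsto\int_0^x y^\alpha g_\gamma(y)^{-1}\mu(dy)$, and only then passes to the tail by L'H\^opital. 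To salvage your route you would need second-order (de Haan type) or density-level information, not just the blow-up rate of $\phi$ at $q=-\lambda$.
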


	We will compare Theorem \ref{thm:doaofKum} with previous studies in Remarks \ref{rem:thm1} and \ref{rem:thm2}.

	\subsection*{Previous studies}\label{section:prev}
	
	We briefly review several previous studies on quasi-stationary distributions for one-dimensional diffusions.
	
	A first remarkable result on quasi-stationary distributions for one-dimensional diffusions was given by Mandl \cite{Mandl}. He treated the case the right boundary is natural and gave a sufficient condition for the convergence to the minimal quasi-stationary distributions.
	His condition has been weakened by many authors e.g., \cite{CMM}, \cite{Hening}, \cite{Kolb} and \cite{Ratesofdecay}. Under certain weak assumptions it is shown that all compactly supported initial distributions imply convergence to the minimal quasi-stationary distribution.
	
	The case where the right boundary is entrance has also been widely studied.
	Cattiaux et al.\ \cite{QSDpopulation} and Littin \cite{Littin} showed that in this case, there exists a unique quasi-stationary distribution and all compactly supported initial distributions are attracted to the unique quasi-stationary distribution.
	Takeda \cite{Takeda:QSD} generalized their results to symmetric Markov processes 
	with the {\it tightness property}.
	
	Let us come back to the case where the right boundary is natural.	
	We have then non-minimal quasi-stationary distributions.
	Firstly, Martinez, Picco and San Martin \cite{DoAofBM} studied Brownian motions with negative drifts and showed convergence to non-minimal quasi-stationary distributions under the assumptions on tail behavior of the initial distribution:
	\begin{Thm}[{\cite[Theorem 1.1]{DoAofBM}}] \label{prevStu:BM}
		Let $B_t$ be a standard Brownian motion and let $\alpha > 0$ and consider the process
		\begin{align}
			X_t = B_t - \alpha t. \label{}
		\end{align}
		For an initial distribution $\mu$ on $(0,\infty)$ assume
		$\mu(dx) = \rho(x)dx$ for some $\rho \in L^1((0,\infty),dx)$ satisfying
		\begin{align}
			\log \rho(x) \sim -(\alpha - \delta) x \quad (x \to \infty)
		\end{align}
		for some $\delta \in (0,\alpha)$.
		Then it holds
		\begin{align}
			\bP_{\mu}[X_t \in dx \mid  T_0 > t ] \xrightarrow[t \to \infty]{} \nu_{\lambda}(dx), \label{}
		\end{align}
		with
		\begin{align}
			\lambda = (\alpha^2 - \delta^2)/2 \quad \text{and}\quad 
			\nu_{\lambda}(dx) = C_\lambda \mathrm{e}^{-\alpha x}\sinh (x\sqrt{\alpha^2 - 2\lambda})dx \label{}
		\end{align}
		for the normalizing constant $C_\lambda$.
	\end{Thm}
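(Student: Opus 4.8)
The plan is to deduce the result from Theorem~\ref{main-theorem-03}, so the task reduces to realizing $X_t = B_t - \alpha t$ within the paper's framework and then verifying the tail condition~(i). As a $\frac{d}{dm}\frac{d}{ds}$-diffusion on $[0,\infty)$, $X$ has local generator $\frac12\frac{d^2}{dx^2} - \alpha\frac{d}{dx}$ on $(0,\infty)$, a natural boundary at $+\infty$, and spectral bottom $\lambda_0 = \alpha^2/2$. The first hitting uniqueness on $\cP_{\mathrm{exp}}$ holds for this diffusion (cf.\ \cite{RogersFPP}). For $\lambda = (\alpha^2 - \delta^2)/2 \in (0,\lambda_0)$, the candidate limit $\nu_\lambda$ is the quasi-stationary distribution whose Lebesgue density $n$ solves the Fokker--Planck eigenequation $\frac12 n'' + \alpha n' + \lambda n = 0$ with $n(0) = 0$; since the characteristic roots are $-\alpha \pm \sqrt{\alpha^2 - 2\lambda} = -\alpha \pm \delta$, this gives $n(x) \propto \mathrm{e}^{-\alpha x}\sinh(\delta x)$, matching the stated $\nu_\lambda$ because $\sqrt{\alpha^2 - 2\lambda} = \delta$. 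By the defining property of a quasi-stationary distribution one has $\bP_{\nu_\lambda}[T_0 > t] = \mathrm{e}^{-\lambda t}$, hence $\bP_{\nu_\lambda}[T_0 \in dt] = \lambda\mathrm{e}^{-\lambda t}dt$, so the hypothesis of Theorem~\ref{main-theorem-03} is met. It therefore suffices to verify its condition~(i), namely $\bP_\mu[T_0 > t+s]/\bP_\mu[T_0 > t] \to \mathrm{e}^{-\lambda s}$.

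Next I would compute the tail of $T_0$ explicitly. Using the inverse-Gaussian first-passage density
\begin{align}
\bP_x[T_0 \in dt] = \frac{x}{\sqrt{2\pi t^3}}\exp\!\left(-\frac{(x-\alpha t)^2}{2t}\right)dt,
\end{align}
integrating against $\mu(dx) = \rho(x)dx$ and completing the square to extract $\mathrm{e}^{-\alpha^2 t/2} = \mathrm{e}^{-\lambda_0 t}$ gives $\bP_\mu[T_0 \in dt] = f_\mu(t)\,dt$ with
\begin{align}
f_\mu(t) = \frac{\mathrm{e}^{-\alpha^2 t/2}}{\sqrt{2\pi t^3}}\,G(t), \qquad G(t) := \int_0^\infty \rho(x)\,x\,\mathrm{e}^{\alpha x}\,\mathrm{e}^{-x^2/(2t)}\,dx.
\end{align}
The whole problem is thus encoded in the large-$t$ behavior of the Gaussian transform $G(t)$.

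The main computation is a Laplace/saddle-point analysis of $G(t)$. Writing the integrand as $\exp\!\left(\log\rho(x) + \alpha x + \log x - x^2/(2t)\right)$ and using $\log\rho(x) \sim -(\alpha-\delta)x$, the exponent is $\approx \delta x - x^2/(2t)$, with interior maximum at $x^\ast = \delta t$, maximal value $\delta^2 t/2$, and Gaussian curvature scale $\sqrt{t}$. Localizing to $x = \delta t + O(\sqrt{t})$ yields $G(t) \sim \delta\sqrt{2\pi}\,t^{3/2}\mathrm{e}^{\delta^2 t/2}$, hence
\begin{align}
f_\mu(t) \sim \delta\,\mathrm{e}^{-(\alpha^2 - \delta^2)t/2} = \delta\,\mathrm{e}^{-\lambda t}, \qquad \bP_\mu[T_0 > t] \sim \frac{\delta}{\lambda}\,\mathrm{e}^{-\lambda t}.
\end{align}
This is exactly condition~(i), and Theorem~\ref{main-theorem-03} then upgrades it to $\mu_t \to \nu_\lambda$, completing the proof.

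The hard part is making this last step rigorous under the \emph{logarithmic} hypothesis $\log\rho(x) \sim -(\alpha-\delta)x$, which determines $\rho$ only up to a sub-exponential factor. A crude exponential sandwich $\mathrm{e}^{-(\alpha-\delta+\eps)x} \le \rho(x) \le \mathrm{e}^{-(\alpha-\delta-\eps)x}$ (valid for large $x$) pins down the exponential rate, i.e.\ $\frac{1}{t}\log\bP_\mu[T_0 > t] \to -\lambda$, but is too lossy to yield the ratio in~(i), since the mismatched rates blow up in $\bP_\mu[T_0>t+s]/\bP_\mu[T_0>t]$. The delicate point is to show that the mass of $G(t)$ concentrates on the saddle window $x \in \delta t + O(\sqrt{t})$ — where $x \to \infty$, so the hypothesis applies throughout — and that the resulting sub-exponential prefactor varies slowly enough that it cancels in $G(t+s)/G(t) \to \mathrm{e}^{\delta^2 s/2}$. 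Once this concentration and slow-variation estimate is established, everything else — the identification of $\nu_\lambda$, the explicit density, and the final invocation of Theorem~\ref{main-theorem-03} — is routine.
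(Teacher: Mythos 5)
Your overall architecture coincides with the paper's: realize $X$ as a $\frac{d}{dm}\frac{d}{ds}$-diffusion, identify $\nu_\lambda$, invoke first hitting uniqueness, and reduce via Theorem~\ref{main-theorem-03} to the tail-ratio condition (i), which you attack through the explicit inverse-Gaussian hitting density and a Laplace analysis of $G(t)$ (the paper obtains this statement as the special case $\alpha = 1/2$, $\beta = 0$ of Theorem~\ref{thm:doaofKum}(i), cf.\ Remark~\ref{rem:thm1}, by exactly this scheme). The problem is that the single step you defer as ``the hard part'' is the entire mathematical content of the theorem, and your sketch of it makes claims that are actually false under the stated hypothesis. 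Since $\log\rho(x) \sim -(\alpha-\delta)x$ determines $\rho$ only up to a factor $\mathrm{e}^{o(x)}$, neither the localization to $x = \delta t + O(\sqrt{t})$ nor the precise asymptotics $G(t) \sim \delta\sqrt{2\pi}\,t^{3/2}\mathrm{e}^{\delta^2 t/2}$ and $f_\mu(t) \sim \delta\,\mathrm{e}^{-\lambda t}$ can hold in general: a factor $\mathrm{e}^{\sqrt{x}}$ in $\rho$ shifts the mass of $G(t)$ by far more than $\sqrt{t}$ and multiplies it by an unbounded sub-exponential factor, so only an $\eps t$-scale window is available. Nor is the cancellation of such factors in ratios a routine technicality: the paper's Remark~\ref{counterexample} exhibits a density with $\log f(t) \sim -\lambda t$ whose tail ratios do \emph{not} converge, which is precisely the kind of failure your unproven ``slow-variation of the prefactor'' step must rule out.

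The paper closes this gap with two tools absent from your proposal. Lemma~\ref{laplaceprinciple} (proved via Kohlbecker's Tauberian theorem) establishes concentration of the Laplace integral on the window $[(\delta^2/4-\eps)t^2,\,(\delta^2/4+\eps)t^2]$ for the $\mathrm{e}^{-x/t}$ transform. Then, crucially, instead of comparing $G(t+s)$ with $G(t)$ — a two-time comparison in which the unknown $\mathrm{e}^{o(x)}$ factors are evaluated near different saddle points — Proposition~\ref{non-minimalQSD_density} works with the logarithmic derivative
\begin{align}
\frac{d}{dt}\log \int_0^\infty \mathrm{e}^{-x/t}\tilde\rho(x)\,dx
= \frac{\int_0^\infty x\,\mathrm{e}^{-x/t}\tilde\rho(x)\,dx}{t^2\int_0^\infty \mathrm{e}^{-x/t}\tilde\rho(x)\,dx}, \nonumber
\end{align}
a same-time ratio in which the unknown factor cancels identically and the concentration window pins the value to $\delta^2/4 \pm \eps$; Lamperti's theorem then upgrades the derivative limit to regular variation of $f_\mu(\log u)$, i.e.\ to condition (i). If you insist on your direct route, you would need to (a) prove the $\eps t$-window concentration lemma and (b) run a sandwich argument for $G(t+s)/G(t)$ over a common window — which is feasible but amounts to reproving Lemma~\ref{laplaceprinciple}. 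As written, your proposal asserts the conclusion of that lemma rather than proving it, so the proof is incomplete at its only non-routine point.
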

	\begin{Rem}\label{rem:thm1}
		When $\alpha = 1/2, \beta = 0$ and $\gamma > 0$, the process $\sqrt{2 Y^{(1/2,0,\gamma)}}$ is a Brownian motion with a negative drift $-\sqrt{2\gamma} t$. Hence this theorem is generalized by (i) of Theorem \ref{thm:doaofKum}.
	\end{Rem}

	Secondly, Lladser and San Martin \cite{DoAofOU} studied Ornstein-Uhlenbeck processes:  
	\begin{Thm}[{\cite[Theorem 1.1]{DoAofOU}}]\label{prevStu:OU}
		Let $\alpha > 0$. Let $X$ be the solution of the following SDE:
		\begin{align}
		dX_t = dB_t - \alpha X_tdt, \label{}
		\end{align}
		where $B$ is a standard Brownian motion.
		For an initial distribution $\mu$ on $(0,\infty)$ assume
		$\mu(dx) = \rho(x)dx$ for some $\rho \in L^1((0,\infty),dx)$ satisfying
		\begin{align}
		\rho(x) \sim x^{-2 + \delta}\ell(x) \quad (x \to \infty)
		\end{align}
		for some $\delta \in (0,1)$ and a slowly varying function $\ell$ at $\infty$.
		Then it holds
		\begin{align}
		\bP_{\mu}[X_t \in dx \mid  T_0 > t ] \xrightarrow[t \to \infty]{} \nu_{\lambda}(dx) \label{}
		\end{align}
		with 
		\begin{align}
		\lambda = \alpha(1 - \delta) \quad \text{and}\quad 
		\nu_{\lambda}(dx) = C_\lambda \psi_{-\lambda}(x)\mathrm{e}^{-\alpha x^2}dx \label{}
		\end{align}
		for the normalizing constant $C_\lambda$, where $u = \psi_{-\lambda}$ denotes the unique solution for the following differential equation:
		\begin{align}
		\frac{1}{2}\frac{d^2}{dx^2}u - \alpha x\frac{d}{dx}u = -\lambda u, \quad 
		\lim_{x \to +0}u(x) = 0,\quad 
		\lim_{x \to +0}\frac{d}{dx}u(x) = 1 \quad (x \in (0,\infty)). \label{}	
		\end{align}   
	\end{Thm}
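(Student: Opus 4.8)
The plan is to deduce the statement from the general reduction Theorem~\ref{main-theorem-03}, so that the whole problem collapses to a one-dimensional tail computation for $T_0$. The Ornstein--Uhlenbeck process killed at $0$ is a $\frac{d}{dm}\frac{d}{ds}$-diffusion on $[0,\infty)$ with scale density $s'(x)=\mathrm{e}^{\alpha x^2}$ and speed density proportional to $\mathrm{e}^{-\alpha x^2}$; in the notation of \eqref{eq47} it is exactly $\tilde{Y}^{(1/2,\,2\alpha,\,0)}$, i.e.\ Case~2 of \eqref{cases} with $\gamma=0$. Its right boundary $\infty$ is natural, so by Proposition~\ref{NSCforQSDforKum} the family of non-minimal quasi-stationary distributions is nonempty and the spectral bottom is $\lambda_0=\alpha>0$. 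I would first record that $\psi_{-\lambda}$, the solution of the stated boundary value problem, is the $\lambda$-eigenfunction of the generator vanishing at $0$, and that $\nu_\lambda(dx)=C_\lambda\psi_{-\lambda}(x)\mathrm{e}^{-\alpha x^2}dx$ is the quasi-stationary distribution with $\bP_{\nu_\lambda}[T_0\in dt]=\lambda\mathrm{e}^{-\lambda t}dt$ (a quasi-stationary distribution always has exponential lifetime). The first hitting uniqueness on $\cP_{\mathrm{exp}}$ holds here, being part of the verification carried out for the Kummer class, so we are placed precisely in the situation of Theorem~\ref{main-theorem-03}.

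Granting the hypotheses, Theorem~\ref{main-theorem-03} reduces the desired convergence (iii) to the tail condition (i), namely
\begin{align}
\lim_{t\to\infty}\frac{\bP_{\mu}[T_0>t+s]}{\bP_{\mu}[T_0>t]}=\mathrm{e}^{-\lambda s}\qquad(s>0),\n
\end{align}
for $\lambda=\alpha(1-\delta)$. Thus it remains only to translate the tail hypothesis $\rho(x)\sim x^{-2+\delta}\ell(x)$ on the initial density into this exact exponential ratio for the lifetime.

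For the analytic core I would work through the analytically continued Laplace transform. Writing $g_{-c}(x)=\bP_x[\mathrm{e}^{c T_0}]$ for $0\le c<\lambda_0$, this is the positive solution of $\frac12 u''-\alpha x u'=-c u$ normalized by $g_{-c}(0)=1$, and from the known asymptotics of the confluent hypergeometric (parabolic cylinder) functions it grows polynomially, $g_{-c}(x)\sim \mathrm{const}\cdot x^{c/\alpha}$ as $x\to\infty$. Hence
\begin{align}
\bP_{\mu}\big[\mathrm{e}^{c T_0}\big]=\int_0^\infty g_{-c}(x)\,\rho(x)\,dx,\n
\end{align}
whose convergence is governed by $\int^{\infty} x^{\,c/\alpha-2+\delta}\ell(x)\,dx$ and which, by Karamata's theorem, is finite if and only if $c<\alpha(1-\delta)=\lambda$; so the abscissa of convergence, hence the exponential decay rate of the tail of $T_0$ under $\mu$, equals $\lambda$. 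To upgrade this to the exact ratio (i) I would examine the singularity of $c\mapsto\bP_\mu[\mathrm{e}^{cT_0}]$ as $c\uparrow\lambda$, which the same Karamata estimate shows to be a simple-pole-type singularity with a slowly varying correction, and then invoke an exponential Tauberian theorem to conclude $\bP_\mu[T_0>t]\sim \mathrm{e}^{-\lambda t}L(t)$ with $L$ slowly varying, whence (i) follows.

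The main obstacle, I expect, will be this last Tauberian step: converting the abscissa/singularity information into genuine ratio convergence (i) rather than merely the Lyapunov exponent $\frac1t\log\bP_\mu[T_0>t]\to-\lambda$, while carrying the slowly varying factor $\ell$ through the computation and controlling the eigenfunction asymptotics $g_{-c}(x)\sim x^{c/\alpha}$ uniformly near the natural boundary. A secondary bookkeeping point is the change of variables $\tilde Y=\sqrt{2Y}$ linking the Ornstein--Uhlenbeck normalization to the Kummer one: under $y=x^2/2$ the hypothesis $\rho(x)\sim x^{-2+\delta}\ell(x)$ becomes a regularly varying tail of index $-\tfrac12+\tfrac{\delta}{2}$ for the $Y$-law, and feeding $\delta_K=\delta/2$ into Theorem~\ref{thm:doaofKum}(ii) returns $\lambda=2\alpha(\tfrac12-\tfrac{\delta}{2})=\alpha(1-\delta)$, which both identifies the statement as a special case of Theorem~\ref{thm:doaofKum} and serves as a consistency check on the exponents.
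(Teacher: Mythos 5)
Your overall skeleton --- identifying the killed Ornstein--Uhlenbeck process as $\tilde{Y}^{(1/2,\,2\alpha,\,0)}$ (Case 2 with $\gamma=0$), verifying first hitting uniqueness and the existence of $\nu_\lambda$, and invoking Theorem \ref{main-theorem-03} to reduce the claim to the tail-ratio condition (i) --- is exactly the paper's strategy, and your closing ``bookkeeping'' paragraph is in fact the paper's own proof of this statement: Remark \ref{rem:thm2} converts the density hypothesis into a regularly varying tail via Karamata's theorem, and then the change of variables $y=x^2/2$ with $\delta_K=\delta/2$ places you in Theorem \ref{thm:doaofKum}(ii), returning $\lambda=\alpha(1-\delta)$. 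Had you led with that reduction, the proof would be complete.

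The argument you actually propose as the analytic core, however, has a genuine gap at precisely the step you flag. Knowing that $c\mapsto\bP_\mu[\mathrm{e}^{cT_0}]$ has abscissa of convergence $\lambda$, or even a pole-type singularity there with slowly varying correction, does \emph{not} yield the ratio limit (i), and no ``exponential Tauberian theorem'' supplies this implication. Theorems of Kohlbecker/de Bruijn/Kasahara type (the paper uses Kohlbecker's theorem in Lemma \ref{laplaceprinciple}) produce only logarithmic asymptotics $\log\bP_\mu[T_0>t]\sim-\lambda t$, which is strictly weaker than (i): Remark \ref{counterexample} (due to T.\ Watanabe) exhibits a lifetime density with $\log f_\mu(t)\sim-\lambda t$ --- hence with abscissa of convergence exactly $\lambda$ --- for which \eqref{eq54} fails. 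Nor can the step be repaired by applying Karamata's Tauberian theorem at the shifted abscissa: writing $U(t)=\int_0^t\mathrm{e}^{\lambda x}\bP_\mu[T_0\in dx]$, the singularity of $\bP_\mu[\mathrm{e}^{cT_0}]$ as $c\uparrow\lambda$ does give $U(t)\sim t^{\rho}L(t)$ for slowly varying $L$, but in recovering $\bP_\mu[T_0>t]=\int_t^\infty\mathrm{e}^{-\lambda x}dU(x)$ the leading terms cancel upon integration by parts, so regular variation of $U$ alone determines nothing about the tail; one needs density-level control. This is exactly why the paper never performs a Tauberian argument at the abscissa: it works with the explicit hitting kernel of Theorem \ref{hitting-density-example}, reduces (i) to regular variation of $f_\mu(\log t)$ (via the Lamperti/L'H\^opital argument in the proof of Proposition \ref{non-minimalQSD_density}), and applies Karamata's Tauberian theorem only at the origin, through the substitution $s=h(t)\to 0$ in the transform $\int_0^\infty x^{\alpha}g_\gamma(x)^{-1}\mathrm{e}^{-sx}\mu(dx)$, the exponential time factor having been factored out exactly by the kernel. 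As stated, your route cannot be completed; the fix is the reduction you already wrote down in your final paragraph.
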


	\begin{Rem}\label{rem:thm2}
		In Theorem \ref{thm:doaofKum} (ii), if $\mu(dx) = \rho(x) dx$ for $\rho \in  L^1((0,\infty),dx)$ and
		\begin{align}
		\rho (x) \sim x^{-\alpha - \gamma /\beta + \delta - 1}\ell(x) \quad (x \to \infty), \label{}
		\end{align}
		for a slowly varying function $\ell$,
		then \eqref{eq61} holds from Karamata's theorem \cite[Proposition 1.5.8]{Regularvariation}.
		Hence (ii) of Theorem \ref{thm:doaofKum} is an extension of \cite[Theorem 1.1]{DoAofOU}.
	\end{Rem}

	\subsection*{Outline of the paper}
	The remainder of the present paper is organized as follows. In Section \ref{section:preliminary}, we recall several known results on one-dimensional diffusions, the quasi-stationary distributions and the spectral theory for second-order ordinary differential operators.
	In Section \ref{section:convtoqsdgeneral}, we show Theorem \ref{main-theorem-03}, a general result for convergence to quasi-stationary distributions.
	In Section \ref{section:examplesofFHP}, we give the hitting density of Kummer diffusions with negative drifts.
	In Section \ref{section:convtoqsd}, we show Theorem \ref{thm:doaofKum}, which gives a sufficient condition for convergence to non-minimal quasi-stationary distributions  for the class of Kummer diffusions with negative drifts.
	
\subsection*{Acknowledgement}
	The author would like to thank Kouji Yano, who read an early draft of this paper and gave him valuable comments. Thanks to him, the present paper was significantly improved. The author also would like to thank Toshiro Watanabe, who suggested the counterexample given in Remark \ref{counterexample}.

	\section{Preliminary}\label{section:preliminary}
	
	\subsection{Feller's canonical form of second order differential operators}
	
	Let $(X,\bP_x)_{x \in I}$ be a one-dimensional diffusion on $I = [0,b) \ \text{or} \ [0,b] \quad (0 < b \leq \infty)$, that is, the process $X$ is a time-homogeneous strong Markov process on $I$ which has a continuous path up to its lifetime.
	Throughout this paper, we always assume
	\begin{align}
	\bP_x[T_y < \infty] > 0 \quad (x \in I \setminus \{0\}, \ y \in [0,b)), \label{eq37}
	\end{align} 
	where $T_y$ denotes the first hitting time of $y$ and, assume the point $0$  a trap;
	\begin{align}
		X_t = 0 \quad \text{for} \ t \geq T_0. \label{}
	\end{align}
	
	Let us recall Feller's classification of the boundaries (see e.g., It\^o \cite{Ito_essentials}).
	There exist a Radon measure $m$ on $I \setminus \{0\}$ with full support and a strictly increasing continuous function $s$ on $(0,b)$ such that the local generator $\cL$ on $(0,b)$ is represented by
	\begin{align}
	\cL = \frac{d}{dm}\frac{d}{ds}. \label{}
	\end{align}
	We call $m$ {\it the speed measure} and $s$ {\it the scale function} of $X$ and we say $X$ is a $\frac{d}{dm}\frac{d}{ds}$-diffusion.
	Let $c = 0$ or $b$ and take $d \in (0,b)$. Set
	\begin{align}
		I(c) = \int_{c}^{d}ds(x)\int_{c}^{x}dm(y), \quad
		J(c) = \int_{c}^{d}dm(x)\int_{c}^{x}ds(y). \label{}
	\end{align}
	The boundary $c$ is classified as follows:
	 \begin{align}
	 \text{The boundary}\	c \ \text{is} \quad
	 	\left\{
	 	\begin{aligned}
	 		&\text{regular} &&\text{when}\  I(c) < \infty, \ J(c) < \infty. \\
	 		&\text{exit} &&\text{when} \ I(c) = \infty, \ J(c) < \infty. \\
	 		&\text{entrance} &&\text{when} \ I(c) < \infty, \ J(c) = \infty. \\
	 		&\text{natural} &&\text{when} \ I(c) = \infty, \ J(c) = \infty.
	 	\end{aligned}
	 	\right.
	 \end{align}
	Since $\bP_{x}[T_0 < \infty] > 0$ for every $x > 0$, the boundary $0$ is necessarily regular or exit, equivalently $J(0) < \infty$. Note that in this case $s(0): = \lim_{x \to +0}s(x) > -\infty$ holds. 
	We also assume that the boundary $b$ is not exit and that the boundary $b$ is reflecting when it is regular.
	
	Let us consider a diffusion on $I$ whose local generator $\cL$ on $(0,b)$ is
	\begin{align}
		\cL = a(x)\frac{d^2}{dx^2} + c(x) \frac{d}{dx} \quad (x \in (0,b))
	\end{align}
	for functions $a$ and $c$. Assume $a(x) > 0 \ (x \in (0,b))$. Then $\cL = \frac{d}{dm}\frac{d}{ds}$, where
	\begin{align}
		dm(x) = \frac{1}{a(x)}\exp \left(\int_{d}^{x}\frac{c(y)}{a(y)}dy \right)dx, \quad 
		ds(x) = \exp \left(-\int_{d}^{x}\frac{c(y)}{a(y)}dy\right)dx \quad 
 	\end{align}
	for arbitrary taken $d \in (0,b)$.
	

%

	\subsection{Quasi-stationary distributions} \label{subsection:qsd}
	
	Let us summarize known results on quasi-stationary distributions for one-dimensional diffusions and give a necessary and sufficient condition for existence of quasi-stationary distributions.
	Let $X$ be a $\frac{d}{dm}\frac{d}{ds}$-diffusion on $I = [0,b)$ or $[0,b] \ (0 < b \leq \infty)$. 
	We define a function $u = \psi_{\lambda}$ as the unique solution of the following equation:
	\begin{align}
	\frac{d}{dm}\frac{d}{ds}u(x) = \lambda u(x), \quad 
	\lim_{x \to +0}u(x) = 0,\quad 
	\lim_{x \to +0}\frac{d}{ds}u(x) = 1 \quad (x \in (0,b), \lambda \in \bR). \label{eq63}	
	\end{align}   
	Note that from the assumption that the boundary $0$ is regular or exit, the function $\psi_{\lambda}$ always exists.
	The operator $L = -\frac{d}{dm}\frac{d}{ds}$ defines a non-negative definite self-adjoint operator on $L^2(I,dm) := \{ f:I \to \bR \mid \int_{I}|f|^2dm < \infty \}$.
	Here we assume the Dirichlet boundary condition at $0$ and the Neumann boundary condition at $b$ if the boundary $b$ is regular. We denote the infimum of the spectrum of $L$ by $\lambda_0 \geq 0$.
	
	Let us consider the case where the boundary $b$ is not natural. It is then known that there is a unique quasi-stationary distribution (noting that Takeda \cite{Takeda:QSD} showed the corresponding result for general Markov processes with the tightness property): 
	\begin{Prop}[{see e.g., \cite[Lemma 2.2, Theorem 4.1]{Littin}}]
		Assume the boundary $b$ is not natural.
		Then it holds $\lambda_0 > 0$ and the function $\psi_{-\lambda_0}$ is strictly positive and integrable w.r.t.\ $dm$ and, there is a unique quasi-stationary distribution given as
		\begin{align}
			\nu_{\lambda_0}(dx) = \lambda \psi_{-\lambda_0}(x)dm(x), \quad \bP_{\nu_{\lambda_0}}[T_0 \in dt] = \lambda_0 \mathrm{e}^{-\lambda_0 t}dt. \label{}
		\end{align}
		Moreover, for every probability distribution $\mu$ on $(0,b)$ with a compact support,
		it holds
		\begin{align}
			\mu_{t} \xrightarrow[t \to \infty]{} \nu_{\lambda_0}. \label{}		
		\end{align}
	\end{Prop}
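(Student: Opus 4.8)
The plan is to route everything through the spectral theory of the self-adjoint operator $L = -\frac{d}{dm}\frac{d}{ds}$ on $L^2(I,dm)$ with the Dirichlet condition at $0$. Since $b$ is not natural (hence entrance, or regular and reflecting), the boundary classification forces the resolvent of $L$ to be compact, so $L$ has purely discrete spectrum $\lambda_0 < \lambda_1 < \cdots \to \infty$ with a complete orthonormal system of eigenfunctions $(\varphi_n)_{n\geq 0}$; I would obtain this by checking that the Green kernel is Hilbert--Schmidt under the entrance/regular hypothesis. By Sturm--Liouville oscillation theory the bottom eigenvalue is simple and its eigenfunction has no interior zero, so it may be taken strictly positive; as it solves $\frac{d}{dm}\frac{d}{ds}\varphi_0 = -\lambda_0\varphi_0$ and vanishes at $0$, it is a constant multiple of $\psi_{-\lambda_0}$. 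This gives the asserted strict positivity of $\psi_{-\lambda_0}$.

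To get $\lambda_0 > 0$ together with the integrability and the exact normalization, I would integrate the eigenvalue equation against $dm$,
\[
\int_0^b \psi_{-\lambda_0}\, dm = -\frac{1}{\lambda_0}\int_0^b \frac{d}{dm}\frac{d}{ds}\psi_{-\lambda_0}\, dm = \frac{1}{\lambda_0}\left( \frac{d}{ds}\psi_{-\lambda_0}(0) - \frac{d}{ds}\psi_{-\lambda_0}(b-) \right) = \frac{1}{\lambda_0},
\]
using $\frac{d}{ds}\psi_{-\lambda_0}(0) = 1$ from \eqref{eq63} and the vanishing-flux condition $\frac{d}{ds}\psi_{-\lambda_0}(b-) = 0$ that the reflecting/entrance boundary imposes on the domain of $L$. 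This simultaneously shows $\int_0^b\psi_{-\lambda_0}\,dm = 1/\lambda_0 < \infty$ and that $\nu_{\lambda_0}(dx) := \lambda_0\psi_{-\lambda_0}(x)dm(x)$ is a probability measure. That $\lambda_0$ is strictly positive I would see separately: the $\lambda = 0$ solution of \eqref{eq63} is the harmonic function $\psi_0(x) = s(x) - s(0)$, which has $\frac{d}{ds}\psi_0 \equiv 1$ and hence fails the boundary condition at $b$, so $0$ is not an eigenvalue; with the spectrum discrete and non-negative this forces $\lambda_0 > 0$.

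Next I would establish the quasi-stationarity and the exponential lifetime. Writing $P_tf(x) = \bE_x[f(X_t);\, T_0>t]$ for the killed semigroup, self-adjointness gives $\int_0^b \psi_{-\lambda_0}\,P_tf\,dm = \mathrm{e}^{-\lambda_0 t}\int_0^b \psi_{-\lambda_0}f\,dm$, i.e.\ $\nu_{\lambda_0}$ is a left eigenmeasure, $\nu_{\lambda_0}P_t = \mathrm{e}^{-\lambda_0 t}\nu_{\lambda_0}$. Taking $f\equiv 1$ yields $\bP_{\nu_{\lambda_0}}[T_0 > t] = \mathrm{e}^{-\lambda_0 t}$, whence $\bP_{\nu_{\lambda_0}}[T_0 \in dt] = \lambda_0\mathrm{e}^{-\lambda_0 t}dt$, and dividing the two relations shows $\nu_{\lambda_0}$ is invariant under conditioning, i.e.\ it is a quasi-stationary distribution. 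For uniqueness I would use the general fact that any quasi-stationary distribution has exponential lifetime and is therefore a positive left eigenmeasure $\nu(dx) = h(x)dm(x)$ with $Lh = \lambda h$ and $h$ solving \eqref{eq63}, so $h \propto \psi_{-\lambda}$; such an $h$ must satisfy the boundary condition at $b$, i.e.\ $\lambda$ is an eigenvalue, and it must be positive, which by oscillation theory happens only for the ground state. Hence $\lambda = \lambda_0$ and the quasi-stationary distribution is unique.

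Finally, for a compactly supported $\mu$ I would use the spectral expansion
\[
\bE_\mu[f(X_t);\, T_0 > t] = \sum_{n\geq 0}\mathrm{e}^{-\lambda_n t}\,\langle \varphi_n,\mu\rangle\,\langle \varphi_n,f\rangle_m ,
\]
where compact support of $\mu$ makes each coefficient finite. As $t\to\infty$ the $n=0$ term dominates by the spectral gap $\lambda_1 > \lambda_0$, and $\langle\varphi_0,\mu\rangle>0$ since $\varphi_0>0$; dividing by the same expansion for $f\equiv 1$ gives, for bounded continuous $f$,
\[
\mu_t(f) = \frac{\bE_\mu[f(X_t);\, T_0 > t]}{\bP_\mu[T_0 > t]} \xrightarrow[t\to\infty]{} \frac{\langle\varphi_0,f\rangle_m}{\langle\varphi_0,1\rangle_m} = \int_{(0,b)} f\, d\nu_{\lambda_0},
\]
which is the claimed weak convergence. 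I expect the functional-analytic input to be the main obstacle: rigorously establishing discreteness of the spectrum from the non-natural boundary, and above all justifying the interchange of $t\to\infty$ with the infinite sum (a uniform domination of the tail $\sum_{n\geq 1}$), for which the smoothing of $P_t$ away from $t=0$ together with the compact support of $\mu$ is exactly what one needs.
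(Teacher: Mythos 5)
The paper itself offers no proof of this proposition; it is quoted from Littin \cite{Littin}, so your proposal can only be measured against that cited spectral-theoretic route, which you follow in outline: discreteness of the spectrum of $L$ via a Hilbert--Schmidt Green kernel, simplicity and positivity of the ground state, the flux identity $\lambda_0\int_0^b\psi_{-\lambda_0}\,dm=1$, the eigenmeasure computation for quasi-stationarity, and the spectral-gap argument for the Yaglom limit. Those parts are sound in outline, with fixable wrinkles you should at least flag: when $0$ is an exit boundary one has $m(0,x]=\infty$, so $1\notin L^2(I,dm)$ and both your pairing $\langle\varphi_n,1\rangle_m$ and the application of self-adjointness to $f\equiv 1$ need a truncation/monotone-convergence step; and at an entrance boundary the operator can be in the limit-circle case, so the ``vanishing-flux condition that the entrance boundary imposes'' is not automatic but is the condition selecting the Markovian self-adjoint extension (the generator of the diffusion semigroup), which must be said.

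The genuine gap is in the uniqueness argument. The step ``such an $h$ must satisfy the boundary condition at $b$, i.e.\ $\lambda$ is an eigenvalue'' is asserted, never proved, and it is the entire content of uniqueness. Positivity cannot do this work: by Sturm comparison, $\psi_{-\lambda}$ is strictly positive on $(0,b)$ for \emph{every} $\lambda\in(0,\lambda_0]$, so ``positive solution of \eqref{eq63}'' holds for a continuum of $\lambda$, and oscillation theory identifies the ground state only among genuine eigenfunctions --- whether $\psi_{-\lambda}$ is one (equivalently, lies in the domain of $L$, equivalently has vanishing flux at $b$ and is square-integrable) is exactly the point at issue. A sanity check makes the gap vivid: in the natural-boundary case, Theorem \ref{QSD-char01} produces a quasi-stationary distribution $\nu_\lambda=\lambda\psi_{-\lambda}\,dm$ for every $\lambda\in(0,\lambda_0]$, and your argument minus the unproven step applies verbatim there and would wrongly conclude uniqueness; hence the unproven step is precisely where non-naturalness of $b$ must enter. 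To close it you need: (a) if $c\,\psi_{-\lambda}\,dm$ is a quasi-stationary distribution then $\lim_{x\to b}\frac{d}{ds}\psi_{-\lambda}(x)=0$ --- for instance, combine the identity $\lambda\int_0^x\psi_{-\lambda}\,dm=1-\frac{d}{ds}\psi_{-\lambda}(x)$, obtained by integrating \eqref{eq63}, with the exponential-lifetime requirement via Green's formula against $P_t1$, using that $P_t1$ has zero flux at the non-natural boundary $b$ and that $\lim_{x\to b}\bP_x[T_0>t]>0$; and (b) vanishing flux at $b$ forces $\lambda$ to be an eigenvalue of $L$ (here one must verify $\psi_{-\lambda}\in L^2(dm)$ near $b$, using $m(\cdot,b)<\infty$), after which the spectral gap and ground-state positivity give $\lambda=\lambda_0$. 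Without (a) and (b), values $\lambda\in(0,\lambda_0)$ are not excluded and the uniqueness claim is unsupported.
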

	
	We now assume the boundary $b$ is natural.
	We now have
	\begin{align}
		\bP_{x}[T_b < \infty] = 0 \quad (x \in (0,b)), \label{eq40}
	\end{align} 
	and
	\begin{align}
		\frac{s(x) - s(0)}{s(M) - s(0)} = \bP_{x}[T_M < T_0] \quad (0 < x < M < b) \label{}
	\end{align}
	(see e.g., It\^o \cite{Ito_essentials}).
	Taking limit $M \to b$, we have from \eqref{eq40}
	\begin{align}
		\frac{s(x) - s(0)}{s(b) - s(0)} = \bP_x[T_0 = \infty]. \label{}
	\end{align} 
	Hence it follows
	\begin{align}
		\bP_x[T_0 < \infty] = 1 \quad \text{for some / any} \ x > 0 \quad  \Leftrightarrow \quad s(b) = \infty. \label{} 
	\end{align}
	If $\nu$ is a quasi-stationary distribution, the distribution $\bP_{\nu}[T_0 \in dt]$ is exponentially distributed because $\bP_{\nu}[T_0 > t+s \mid T_0 > t] = \bP_{\nu}[X_{t+s} > 0 \mid T_0 > t] = \bP_{\nu}[X_s > 0] = \bP_{\nu}[T_0 > s]$. Then by \eqref{eq37} it holds $\bP_{\nu}[T_0 = \infty] < 1$ and therefore $\bP_{\nu}[T_0 = \infty] = 0$, which implies $s(b) = \infty$.
	We recall the following good properties for the function $\psi_{\lambda}$:
	\begin{Prop}[{\cite[Lemma 6.18]{Quasi-stationary_distributions}}]
		Suppose the boundary $b$ is natural and $s(b) = \infty$.
		Then for $\lambda > 0$ the following hold:
		\begin{enumerate}
			\item For $0 < \lambda \leq \lambda_0$, the function $\psi_{-\lambda}$ is strictly positive on $I$ and 
			\begin{align}
			1 = \lambda \int_{0}^{b}\psi_{-\lambda}(x)dm(x). \label{eq78}
			\end{align}
			\item For $\lambda > \lambda_0$, the function $\psi_{-\lambda}$ change signs on $I$.
		\end{enumerate} 
	\end{Prop}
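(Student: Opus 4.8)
The plan is to combine the variational description of $\lambda_0$ with elementary one-dimensional oscillation arguments and the ground-state ($h$-)transform. Throughout I use that the Dirichlet form of $L=-\frac{d}{dm}\frac{d}{ds}$ (with Dirichlet condition at $0$) is $\mathcal E(f,f)=\int_0^b\big(\frac{df}{ds}\big)^2\,ds$, so that
\[
\lambda_0=\inf\Big\{\,\mathcal E(f,f)\big/\textstyle\int_0^b f^2\,dm\,\Big\},
\]
the infimum running over nonzero $f$ in the form domain vanishing at $0$. I also record the two integrated forms of \eqref{eq63}, namely $\frac{d}{ds}\psi_{-\lambda}(x)=1-\lambda\int_0^x\psi_{-\lambda}\,dm$ and $\psi_{-\lambda}(x)=\int_0^x\big(1-\lambda\int_0^y\psi_{-\lambda}\,dm\big)\,ds(y)$, which in particular show that $\psi_{-\lambda}>0$ just to the right of $0$ (its scale-slope at $0$ equals $1$).

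\emph{Step 1 (positivity for $0<\lambda\le\lambda_0$).} First I would treat $\lambda<\lambda_0$ by contradiction: if $\psi_{-\lambda}$ had a first zero at $c\in(0,b)$, set $\phi:=\psi_{-\lambda}\mathbf 1_{(0,c)}$, which lies in the form domain and vanishes at $0$. Integrating by parts against \eqref{eq63} (using $\phi(0)=\phi(c)=0$ and $d\big(\frac{d}{ds}\psi_{-\lambda}\big)=-\lambda\psi_{-\lambda}\,dm$) gives $\mathcal E(\phi,\phi)=\lambda\int_0^c\psi_{-\lambda}^2\,dm=\lambda\|\phi\|_{dm}^2$, so the Rayleigh quotient of $\phi$ equals $\lambda$ and hence $\lambda_0\le\lambda$, contradicting $\lambda<\lambda_0$. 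Thus $\psi_{-\lambda}>0$ on $(0,b)$. For the endpoint $\lambda=\lambda_0$ I would pass to the limit: $\psi_{-\lambda}(x)$ is continuous (indeed analytic) in $\lambda$ for fixed $x$, so $\psi_{-\lambda_0}=\lim_{\lambda\uparrow\lambda_0}\psi_{-\lambda}\ge0$, and an interior zero $c$ of $\psi_{-\lambda_0}$ would be a minimum, forcing $\frac{d}{ds}\psi_{-\lambda_0}(c)=0$ and hence $\psi_{-\lambda_0}\equiv0$ by uniqueness for \eqref{eq63} — impossible. Therefore $\psi_{-\lambda_0}>0$ on $(0,b)$.

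\emph{Step 2 (the identity \eqref{eq78}).} From $\frac{d}{ds}\psi_{-\lambda}(x)=1-\lambda\int_0^x\psi_{-\lambda}\,dm$ it suffices to show $h(b^-)=0$ for $h:=\frac{d}{ds}\psi_{-\lambda}$. Since $dh=-\lambda\psi_{-\lambda}\,dm<0$, the limit $L_\infty:=h(b^-)\in[-\infty,1)$ exists. If $L_\infty<0$, then $\psi_{-\lambda}(x)\le\psi_{-\lambda}(x_1)+\tfrac{L_\infty}{2}\big(s(x)-s(x_1)\big)\to-\infty$ as $x\to b$ because $s(b)=\infty$, contradicting positivity; so $L_\infty\ge0$. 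If $L_\infty>0$, then $\psi_{-\lambda}(x)\ge L_\infty\big(s(x)-s(0)\big)$, whence $h(x)-h(x_0)=-\lambda\int_{x_0}^x\psi_{-\lambda}\,dm\le-\lambda L_\infty\int_{x_0}^x\big(s-s(0)\big)\,dm$; but the last integral diverges as $x\to b$ since $b$ is natural (i.e.\ $J(b)=\infty$), forcing $h(b^-)=-\infty$, again a contradiction. Hence $L_\infty=0$, which is exactly \eqref{eq78} and incidentally yields $\psi_{-\lambda}\in L^1(dm)$.

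\emph{Step 3 (sign change for $\lambda>\lambda_0$) and the main obstacle.} I would prove the contrapositive: if $\psi_{-\lambda}$ does not change sign, then $\lambda\le\lambda_0$. As $\psi_{-\lambda}>0$ near $0$, no sign change forces $\psi_{-\lambda}\ge0$, and as in Step 1 it is then $>0$ on $(0,b)$. With $u:=\psi_{-\lambda}>0$ solving $Lu=\lambda u$, the ground-state transform $f=ug$ gives, after one integration by parts using $d\big(u\tfrac{du}{ds}\big)=\big(\tfrac{du}{ds}\big)^2ds-\lambda u^2\,dm$, the identity $\mathcal E(ug,ug)=\lambda\|ug\|_{dm}^2+\int_0^b u^2\big(\tfrac{dg}{ds}\big)^2\,ds\ge\lambda\|ug\|_{dm}^2$. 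Taking the infimum of the Rayleigh quotient then yields $\lambda_0\ge\lambda$, contradicting $\lambda>\lambda_0$. The two delicate points, which I expect to require the most care, are (a) the passage to $\lambda=\lambda_0$ in Step 1, handled by continuity in the spectral parameter together with uniqueness for \eqref{eq63}; and (b) upgrading the pointwise inequality $\mathcal E(ug,ug)\ge\lambda\|ug\|^2$ to $\lambda_0\ge\lambda$ in Step 3. The latter needs that, as $g$ ranges over the compactly supported elements of the form domain, the functions $ug$ exhaust a form core — true because multiplication by the locally positive continuous function $u$ preserves the class of compactly supported form-domain functions, and these are dense precisely because $b$ is natural (the limit-point case). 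This is where the hypothesis on $b$ is essential.
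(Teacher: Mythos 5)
The paper does not actually prove this proposition: it is imported verbatim from \cite[Lemma 6.18]{Quasi-stationary_distributions}, so there is no in-paper argument to compare yours against; your proposal has to stand on its own, and it essentially does. Step 1 (truncation of $\psi_{-\lambda}$ at its first zero and the computation $\mathcal{E}(\phi,\phi)=\lambda\|\phi\|_{dm}^2$, forcing $\lambda_0\le\lambda$) is the standard oscillation argument, and your handling of the endpoint $\lambda=\lambda_0$ via analytic dependence on the spectral parameter together with uniqueness for \eqref{eq63} is sound. Step 2 is correct: monotonicity of $h=\frac{d}{ds}\psi_{-\lambda}$, exclusion of a negative limit using $s(b)=\infty$, and exclusion of a positive limit using divergence of $\int_d^b\bigl(s(y)-s(d)\bigr)\,dm(y)$; note only that in the paper's notation this last integral is $I(b)$, not $J(b)$ (at a natural boundary both are infinite, so nothing breaks). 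Step 3 is the Allegretto--Piepenbrink direction proved by the ground-state transform, and you correctly isolate the genuinely delicate point, namely that the functions $ug$ sweep out a form core. Two places where a complete writeup needs more care, neither of which invalidates the approach: (a) the core property rests on two facts, that $b$ natural implies the limit-point case at $b$ (so the self-adjoint realization is unique given the condition at $0$ and hence coincides with the Friedrichs extension, whose form domain is the closure of compactly supported functions), and that the Dirichlet condition at the regular or exit boundary $0$ is itself the Friedrichs one; also ``precisely because $b$ is natural'' overstates the dichotomy, since entrance boundaries are limit point as well. (b) In Step 1 you should justify that the truncated function $\phi=\psi_{-\lambda}\mathbf{1}_{(0,c)}$ lies in the form domain, e.g.\ by the characterization of the form domain as finite-energy $L^2(dm)$ functions vanishing at $0$, or by approximating $\phi$ in form norm by $(\psi_{-\lambda}-\eps)^{+}\mathbf{1}_{(0,c)}$, which is supported away from $0$. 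Both are standard facts, so I would classify your proposal as a correct, self-contained proof of a result the paper merely cites.
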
	
	
	Now we state a necessary and sufficient condition for existence of non-minimal quasi-stationary distributions without proof:
	\begin{Thm}[{\cite[Theorem 6.34]{Quasi-stationary_distributions} and \cite[Theorem 3, Appendix I]{KotaniWatanabe}}] \label{QSD-char01}
		Suppose the boundary $b$ is natural.
		Then a non-minimal quasi-stationary distribution exists if and only if
		\begin{align}
			\lambda_0 > 0 \quad \text{and}\quad s(b) = \infty. \label{eq39}
		\end{align}
		This condition is equivalent to 
		\begin{align}
		m(d,b) < \infty \quad \text{for some} \ d \in (0,b) \quad \text{and} \quad \limsup_{x \to b}s(x)m(x,b) < \infty. \label{eq42}			
		\end{align} 
		In this case, a probability measure $\nu$ is a quasi-stationary distribution if and only if 
		\begin{align}
			\nu(dx) = \lambda \psi_{-\lambda}(x)dm(x) = :\nu_\lambda(dx), \quad \bP_{\nu_{\lambda}}[T_0 \in dt] = \lambda \mathrm{e}^{-\lambda t}dt \quad \text{for some} \ 0 < \lambda \leq \lambda_0 . \label{eq73}
		\end{align}
	\end{Thm}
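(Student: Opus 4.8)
The plan is to reduce the statement to the spectral theory of the killed semigroup $(P_t)$, which is self-adjoint on $L^2(I,dm)$ because its transition density $p(t,x,y)$ is symmetric with respect to $dm$. First I would record what was already observed just before the statement: any quasi-stationary distribution $\nu$ has $\bP_\nu[T_0\in dt]=\lambda\mathrm{e}^{-\lambda t}dt$ for some rate, and \eqref{eq37} forces $\lambda>0$ and $\bP_\nu[T_0<\infty]=1$, hence $s(b)=\infty$; so the clause $s(b)=\infty$ is automatically built into existence. The core of the ``in this case'' assertion is then that the quasi-stationary condition is equivalent to $\nu$ being a left eigenmeasure of the killed semigroup, namely $\nu P_t=\mathrm{e}^{-\lambda t}\nu$ for all $t>0$.

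Next I would use $dm$-symmetry to turn this left-eigenmeasure condition into an eigenfunction equation. Since $p(t,x,y)$ is symmetric, $\nu P_t\ll dm$ for $t>0$, and from $\nu=\mathrm{e}^{\lambda t}\nu P_t$ it follows that $\nu$ itself has a density, $\nu(dx)=h(x)\,dm(x)$, with $\nu P_t=\mathrm{e}^{-\lambda t}\nu$ becoming $P_t h=\mathrm{e}^{-\lambda t}h$. Differentiating in $t$ gives $\frac{d}{dm}\frac{d}{ds}h=-\lambda h$ with the Dirichlet condition $h(0+)=0$ coming from killing at $0$, so by the uniqueness in \eqref{eq63} the density $h$ is a constant multiple of $\psi_{-\lambda}$. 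Positivity and integrability then force $0<\lambda\le\lambda_0$ by the quoted Proposition from \cite[Lemma 6.18]{Quasi-stationary_distributions} (for $\lambda>\lambda_0$ the function $\psi_{-\lambda}$ changes sign and cannot be a density), while the normalization \eqref{eq78} fixes $h=\lambda\psi_{-\lambda}$, giving \eqref{eq73}. The exponential hitting law is recovered by symmetry: $\bP_{\nu_\lambda}[T_0>t]=\int_I(P_t\mathbf 1)\,h\,dm=\int_I(P_t h)\,dm=\mathrm{e}^{-\lambda t}\int_I h\,dm=\mathrm{e}^{-\lambda t}$.

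The existence dichotomy now follows: a quasi-stationary distribution $\nu_\lambda$ exists iff the admissible range $(0,\lambda_0]$ is non-empty, and a non-minimal one (with $\lambda<\lambda_0$) exists iff $(0,\lambda_0)\ne\emptyset$, in both cases iff $\lambda_0>0$; together with $s(b)=\infty$ this is exactly \eqref{eq39}. It then remains to match \eqref{eq39} with the analytic condition \eqref{eq42}. Under $b$ natural, $m(d,b)<\infty$ already forces $s(b)=\infty$ (otherwise $J(b)<\infty$ and $b$ would be regular or entrance, not natural), so the two $s(b)=\infty$ clauses are consistent, and the real content to be proved is the quantitative criterion $\lambda_0>0\iff m(d,b)<\infty$ and $\limsup_{x\to b}s(x)m(x,b)<\infty$.

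I expect this last equivalence to be the main obstacle, since it is precisely a Hardy/Muckenhoupt inequality for the string $(dm,ds)$. The natural route is the variational characterization $\lambda_0=\inf\{\int_I\left(\frac{du}{ds}\right)^2ds:\int_I u^2\,dm=1,\ u(0)=0\}$ combined with Muckenhoupt's two-sided estimate $\tfrac14 B^{-1}\le\lambda_0\le B^{-1}$, where $B=\sup_{d<x<b}(s(x)-s(0))\,m(x,b)$ is the Muckenhoupt constant for the weighted Hardy operator $u\mapsto\int_0^{\cdot}\frac{du}{ds}\,ds$. This yields $\lambda_0>0\iff B<\infty$; and since $s(0)>-\infty$ and $m(x,b)\to0$ under $m(d,b)<\infty$, finiteness of $B$ is equivalent to $m(d,b)<\infty$ together with $\limsup_{x\to b}s(x)m(x,b)<\infty$, because the $s(0)m(x,b)$ term vanishes at $b$. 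This analytic step is exactly the content borrowed from \cite{KotaniWatanabe}, while the soft symmetry and semigroup arguments of the earlier paragraphs reduce the remainder of the theorem to it.
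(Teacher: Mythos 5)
The paper states Theorem \ref{QSD-char01} explicitly \emph{without proof} (it is imported from \cite{Quasi-stationary_distributions} and \cite{KotaniWatanabe}), so your proposal can only be compared with the arguments in those sources; your outline does follow their general route (QSDs as left eigenmeasures of the killed semigroup via $dm$-symmetry, plus a Hardy-type criterion for $\lambda_0>0$). Your last paragraph is fine: reducing \eqref{eq39}$\Leftrightarrow$\eqref{eq42} to the Muckenhoupt bound $\tfrac14 B^{-1}\le \lambda_0\le B^{-1}$ with $B=\sup_{x}(s(x)-s(0))m(x,b)$, together with the observations that $s(0)>-\infty$, that $m(x,b)\to 0$ when $m(d,b)<\infty$, and that $m(d,b)<\infty$ forces $s(b)=\infty$ at a natural boundary, correctly settles that equivalence.

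The genuine gap is in the existence (``if'') half. Your eigenmeasure argument proves only necessity: every QSD equals $\nu_\lambda$ for some $0<\lambda\le\lambda_0$. When you then assert ``a quasi-stationary distribution $\nu_\lambda$ exists iff the admissible range $(0,\lambda_0]$ is non-empty,'' you tacitly use the converse — that each candidate $\nu_\lambda=\lambda\psi_{-\lambda}\,dm$ with $0<\lambda\le\lambda_0$ really \emph{is} a QSD — and nothing in your sketch proves it. Concretely, you need the semigroup identity $P_t\psi_{-\lambda}=\mathrm{e}^{-\lambda t}\psi_{-\lambda}$, and this does not follow from the ODE $\frac{d}{dm}\frac{d}{ds}\psi_{-\lambda}=-\lambda\psi_{-\lambda}$ by soft arguments: $\psi_{-\lambda}$ is typically unbounded and not in $L^2(dm)$ (for the drifted Brownian motion of Theorem \ref{prevStu:BM}, $\psi_{-\lambda}(x)$ grows like $\mathrm{e}^{(\alpha+\sqrt{\alpha^2-2\lambda})x}$ while $dm(x)=2\mathrm{e}^{-2\alpha x}dx$), so $L^2$-spectral calculus is unavailable and $\mathrm{e}^{\lambda(t\wedge T_0)}\psi_{-\lambda}(X_{t\wedge T_0})$ is a priori only a local martingale. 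The substantive step is to stop at $T_M$, use optional stopping, and show the boundary contribution $\bE_x\bigl[\mathrm{e}^{\lambda T_M}\psi_{-\lambda}(M);\,T_M<t\wedge T_0\bigr]$ vanishes as $M\to b$; this is exactly where the hypotheses that $b$ is natural, $s(b)=\infty$ and $\lambda\le\lambda_0$ enter (if $s(b)<\infty$, mass escapes to $b$ and the identity fails, consistent with the nonexistence of QSDs in that case). That verification is the core of the cited Theorem 6.34 of \cite{Quasi-stationary_distributions} and is absent from your proposal; note that your computation of $\bP_{\nu_\lambda}[T_0>t]$ also relies on the same unproved identity, so without it neither the existence assertion nor the sufficiency direction of \eqref{eq73} is established.
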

	Here we note that as \cite{Quasi-stationary_distributions} only dealt with the case the boundary $0$ is regular, the proof also works in the case the boundary $0$ is exit.

	For probability distributions on $(0,b)$, we introduce a partial order. For $\mu_1,\ \mu_2 \in \cP(0,\infty)$, we define $\mu_1 \preceq \mu_2$ by
	\begin{align}
		\mu_2(0,x] \leq \mu_1(0,x] \quad (x > 0). \label{}
	\end{align}
	This order gives a total order for quasi-stationary distributions and, as the following proposition says, the distribution $\nu_{\lambda_0}$ gives the minimal element. This is why we call it the minimal quasi-stationary distribution.
	\begin{Prop}
		Suppose the boundary $b$ is natural and \eqref{eq39} holds.
		Then it holds
		\begin{align}
			\nu_{\lambda} \preceq \nu_{\lambda'} \quad (0 < \lambda' \leq \lambda \leq \lambda_0). \label{}
		\end{align}
		In particular, the distribution $\nu_{\lambda_0}$ is the minimal one in this order.
	\end{Prop}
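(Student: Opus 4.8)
The plan is to translate the order relation into a pointwise comparison of the scale-derivatives of the eigenfunctions $\psi_{-\lambda}$. Writing $F_\lambda(x) := \nu_\lambda(0,x] = \lambda\int_0^x \psi_{-\lambda}\,dm$, the assertion $\nu_\lambda \preceq \nu_{\lambda'}$ is by definition the inequality $F_{\lambda'}(x) \le F_\lambda(x)$ for all $x \in (0,b)$. The first step is to record the identity
\begin{align}
F_\lambda(x) = 1 - \frac{d}{ds}\psi_{-\lambda}(x),
\end{align}
which follows by integrating the eigenvalue equation $\frac{d}{dm}\frac{d}{ds}\psi_{-\lambda} = -\lambda\psi_{-\lambda}$ over $(0,x)$ against $dm$ and using the boundary conditions $\lim_{x\to+0}\psi_{-\lambda}(x)=0$ and $\lim_{x\to+0}\frac{d}{ds}\psi_{-\lambda}(x)=1$. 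Consequently the desired inequality is equivalent to
\begin{align}
\frac{d}{ds}\psi_{-\lambda}(x) \le \frac{d}{ds}\psi_{-\lambda'}(x) \quad (x \in (0,b)).
\end{align}

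Set $u = \psi_{-\lambda}$ and $v = \psi_{-\lambda'}$; by the positivity result cited above both are strictly positive on $(0,b)$ for $0 < \lambda', \lambda \le \lambda_0$. The core computation is the Lagrange (Green) identity for the operator $\frac{d}{dm}\frac{d}{ds}$ on $(0,x)$: using $\frac{d}{dm}\frac{d}{ds}u = -\lambda u$ and $\frac{d}{dm}\frac{d}{ds}v = -\lambda' v$ together with $u(0)=v(0)=0$, one obtains
\begin{align}
v(x)\frac{d}{ds}u(x) - u(x)\frac{d}{ds}v(x) = (\lambda' - \lambda)\int_0^x uv\,dm.
\end{align}
Since $\lambda' \le \lambda$ and $\int_0^x uv\,dm > 0$, the right-hand side is $\le 0$, so $v\,\frac{d}{ds}u \le u\,\frac{d}{ds}v$ on $(0,b)$.

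From this I extract the two ingredients I need. First, dividing by $v^2$ shows that the ratio $r := u/v$ has non-positive scale-derivative, hence is non-increasing in $x$; combined with $r(x) \to 1$ as $x \to +0$ (both $u$ and $v$ are asymptotic to $s(x)-s(0)$ near $0$ by the boundary conditions), this yields $u \le v$ on $(0,b)$. Second, since $\frac{d}{ds}v(x) = 1 - F_{\lambda'}(x) = \nu_{\lambda'}(x,b) > 0$ on $(0,b)$, I may combine $v\,\frac{d}{ds}u \le u\,\frac{d}{ds}v$ with $u \le v$ to get $v\,\frac{d}{ds}u \le u\,\frac{d}{ds}v \le v\,\frac{d}{ds}v$, and dividing by $v > 0$ gives $\frac{d}{ds}u \le \frac{d}{ds}v$, which is exactly the required inequality. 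The final assertion that $\nu_{\lambda_0}$ is the minimal element is the special case $\lambda = \lambda_0$.

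The main obstacle is not any single estimate but finding the right chain of comparisons: the Lagrange identity alone only delivers the logarithmic-derivative ordering $\frac{d}{ds}u/u \le \frac{d}{ds}v/v$, which is weaker than what is needed. The decisive point is to first upgrade this to the pointwise bound $u \le v$ via the monotone ratio $u/v$, and only then to convert $v\,\frac{d}{ds}u \le u\,\frac{d}{ds}v$ into $\frac{d}{ds}u \le \frac{d}{ds}v$. One should also take modest care that the scale-derivatives and the Lagrange identity are justified up to the natural boundary $b$, but the integrability built into \eqref{eq78} renders all boundary terms harmless.
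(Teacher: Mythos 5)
Your proposal is correct, and its first half coincides exactly with the paper's proof: you derive the identity $\nu_\lambda(0,x] = 1 - \frac{d}{ds}\psi_{-\lambda}(x)$ (the paper's \eqref{eq79}) and thereby reduce the order relation to the pointwise comparison $\frac{d}{ds}\psi_{-\lambda}(x) \leq \frac{d}{ds}\psi_{-\lambda'}(x)$. The difference lies in how that comparison is established: the paper simply invokes ``a similar argument'' to an external lemma (\cite[Lemma 6.11]{Quasi-stationary_distributions}), whereas you give a self-contained proof via the Lagrange/Wronskian identity
\begin{align}
\psi_{-\lambda'}(x)\frac{d}{ds}\psi_{-\lambda}(x) - \psi_{-\lambda}(x)\frac{d}{ds}\psi_{-\lambda'}(x) = (\lambda'-\lambda)\int_0^x \psi_{-\lambda}\psi_{-\lambda'}\,dm \leq 0, \notag
\end{align}
upgraded in two steps: first the monotone ratio $\psi_{-\lambda}/\psi_{-\lambda'}$ together with the common boundary asymptotics $\psi_{-\lambda}(x) \sim s(x)-s(0)$ gives $\psi_{-\lambda} \leq \psi_{-\lambda'}$, and then the positivity $\frac{d}{ds}\psi_{-\lambda'}(x) = \nu_{\lambda'}(x,b) > 0$ (which uses \eqref{eq78}, available here since \eqref{eq39} is assumed) converts the Wronskian inequality into the desired derivative comparison. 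Your observation that the Wronskian inequality alone only yields the weaker logarithmic-derivative ordering, and that one must first extract $\psi_{-\lambda} \leq \psi_{-\lambda'}$, is precisely the right point of care; the chain $\psi_{-\lambda'}\frac{d}{ds}\psi_{-\lambda} \leq \psi_{-\lambda}\frac{d}{ds}\psi_{-\lambda'} \leq \psi_{-\lambda'}\frac{d}{ds}\psi_{-\lambda'}$ is valid and closes the argument. The net effect is that your write-up proves what the paper outsources, at the cost of a slightly longer argument; the paper's version is shorter but not self-contained.
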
  
	\begin{proof}
		From \eqref{eq63}, it holds
		\begin{align}
			\psi_{-\lambda}(x) = s(x) - \lambda \int_{0}^{x}ds(y)\int_{0}^{y}\psi_{-\lambda}(z)dm(z) \quad (x > 0, \lambda \in \bR). \label{} 
		\end{align}
		Hence it follows
		\begin{align}
			\nu_\lambda(0,x] = \lambda\int_{0}^{x}\psi_{-\lambda}(y)dm(y) =  1 - \psi_{-\lambda}^+(x) \quad (x > 0, 0 < \lambda \leq \lambda_0), \label{eq79}	
		\end{align}
		where $\psi_{-\lambda}^+(x)$ is the right-derivative of $\psi_{-\lambda}$ w.r.t.\ the scale function:
		\begin{align}
			\psi_{-\lambda}^+(x) := \lim_{h \to +0}\frac{\psi_{-\lambda}(x + h) - \psi_{-\lambda}(x)}{s(x+h) - s(x)}. \label{}
		\end{align}
		
		Let $0 < \lambda' \leq \lambda \leq \lambda_0$.
		From \eqref{eq79} we have
		\begin{align}
			\psi_{-\lambda}^+(x) \leq \psi_{-\lambda'}^+(x) \quad (x > 0) \label{}
		\end{align}
		by a similar argument in \cite[Lemma 6.11]{Quasi-stationary_distributions},
		which yields $\nu_{\lambda} \preceq \nu_{\lambda'}$.
	\end{proof}

	\subsection{Spectral theory for second-order differential operators}
	
	Let us briefly review several results on the spectral theory of second-order differential operators. For the details, see e.g., Coddington and Levinson \cite{Coddington} and Kotani \cite{Kotani:singularleft}.
	 
	Set $I = (0,b) \ (0 < b \leq \infty)$.
	Let $dm$ be a Radon measure on $I$ with full support and let $s:I \to (-\infty,\infty)$ be a strictly-increasing continuous function.
	We assume that the boundary $0$ is regular or exit, i.e.\ $\int_{0}^{d}dm(x)\int_{0}^{x}ds(y) < \infty$ for some $0 <d < b$ and assume the boundary $b$ is natural, i.e., $\int_{d}^{b}dm(x)\int_{x}^{b}ds(y) =  \infty$ and $\int_{d}^{b}ds(x)\int_{x}^{b}dm(y) =  \infty$ for some $0 <d < b$. 
	Let $u = \psi_{\lambda}$ be defined by \eqref{eq63}. Set
	\begin{align}
		g_\lambda(x) = \psi_{\lambda}(x)\int_{x}^{b}\frac{ds(y)}{\psi_{\lambda}(y)^2} \quad (\lambda \geq 0). \label{eq55}
	\end{align}
	Then the function $u = g_\lambda$ is the unique, non-increasing solution for
	\begin{align}
		\frac{d}{dm}\frac{d}{ds} u = \lambda u, \quad \lim_{x \to +0}u(x) = 1. \label{}
	\end{align}
	Define the Green function
	\begin{align}
		G_\lambda(x,y) = G_\lambda(y,x) := \psi_{\lambda}(x)g_\lambda(y) \quad (0 \leq x \leq y < b, \ \lambda \geq 0). \label{}
	\end{align}
	Then there exists a unique Radon measure $\sigma$ on $[0,\infty)$, which we call the {\it spectral measure}, such that
	\begin{align}
		G_\lambda(x,y) =  \int_{0}^{\infty}\frac{\psi_{-\xi}(x)\psi_{-\xi}(y)}{\lambda + \xi}\sigma(d\xi) \label{}
	\end{align}
	and the transition density $p(t,x,y)$ w.r.t.\ $dm$ of $\frac{d}{dm}\frac{d}{ds}$-diffusion absorbed at $0$ is given as
	\begin{align}
		p(t,x,y) = \int_{0}^{\infty}\mathrm{e}^{-\lambda t}\psi_{-\lambda}(x)\psi_{-\lambda}(y)\sigma(d\lambda) \quad (t > 0, x,y \in I)
	\end{align}
	(see \cite{McKean:elementary} for the details).
	Note that, under the assumptions of Theorem \ref{QSD-char01}, the spectral measure has its support on $[\lambda_0,\infty)$.


	\section{Convergence to quasi-stationary distributions}\label{section:convtoqsdgeneral}
	
	Let $X$ be a $\frac{d}{dm}\frac{d}{ds}$-diffusion on $[0,b) \ (0 < b  \leq \infty)$ satisfying the condition of Theorem \ref{QSD-char01}.
	As a class of initial distributions we define 
	\begin{align}
		\cP_{\mathrm{exp}} := \{ \mu \in \cP[0,b) \mid \bP_{\mu}[T_0 \in dt] = \lambda \mathrm{e}^{-\lambda t}dt \quad (\lambda > 0) \}. \label{}
	\end{align}
	Provided that the first hitting uniqueness holds on $\cP_{\mathrm{exp}}$,
	an initial distribution $\mu \in \cP[0,b)$ satisfying 
	$\bP_{\mu}[T_0 \in dt] = \lambda \mathrm{e}^{-\lambda t}dt$ for some $0 < \lambda \leq \lambda_0$ is given as	$\mu = \nu_\lambda$.
	
	
	

	Let us prove Theorem \ref{main-theorem-03}:

	\begin{proof}[Proof of Theorem \ref{main-theorem-03}]
		From the Markov property, we have
		\begin{align}
		\bP_{\mu_{t}}[T_0 > s] = \frac{\bP_{\mu}[T_0 > t + s]}{\bP_{\mu}[T_0 > t]} \quad (t,s \geq 0). \label{eq26}
		\end{align}
		Now it is obvious that (i) and (ii) are equivalent.
		In addition, it is not difficult to see that (iii) implies (i).
		
		We show (ii) implies (iii).
		Since $\cP[0,b]$, the class of probability measures on the compactification $[0,b]$, is compact under the topology of weak convergence, we can take a sequence $\{t_n\}_n$ which diverges to $\infty$ such that 
		\begin{align}
		\mu_{t_n} \xrightarrow[n \to \infty]{} \nu \in \cP[0,b]. \label{eq16}
		\end{align}
		From (ii), we have
		\begin{align}
		\bP_{\mu_{t_n}}[T_0 \in ds] \xrightarrow[n \to \infty]{} \lambda \mathrm{e}^{-\lambda s}ds. \label{eq17}
		\end{align}
		On the other hand, for fixed $t > 0$ we have
		\begin{align}
		\bP_{\mu_{t_n}}[T_0 > t] = \int_{[0,b]}\bP_{x}[T_0 > t] \mu_{t_n}(dx), \label{}
		\end{align}
		where we understand
		\begin{align}
		\bP_{x}[T_0 > t] = 
		\begin{cases}
		0 & x = 0, \\
		1 & x = b. 
		\end{cases}
		\end{align}
		Note that since the boundary $b$ is natural, the function $x \mapsto \bP_x[T_0 > t]$ is continuous on $[0,b]$. From \eqref{eq16}, we obtain
		\begin{align}
		\lim_{n \to \infty}\bP_{\mu_{t_n}}[T_0 > t] = \int_{[0,b]}\bP_x[T_0 > t] \nu(dx). \label{} 
		\end{align}
		Then from \eqref{eq17}, it follows that
		\begin{align}
		\int_{[0,b]}\bP_x[T_0 > t] \nu(dx) = \mathrm{e}^{-\lambda t}. \label{eq36}
		\end{align}
		Since it holds that 
		\begin{align}
		\lim_{t \to 0}\bP_x[T_0 > t] = 1\{x > 0\}, \quad \lim_{t \to \infty}\bP_x[T_0 > t] = 1\{x = b \} \quad (x \in [0,b]), \label{}
		\end{align}
		we have from the dominated convergence theorem and \eqref{eq36} that $\nu\{0\} = \nu\{b\} = 0$.
		Therefore $\nu \in \cP(0,b)$ and $\bP_{\nu}[T_0 \in ds] = \lambda \mathrm{e}^{-\lambda s}ds$. Then since the first hitting uniqueness holds on $\cP_{\mathrm{exp}}$, we have $\nu = \nu_\lambda$. The limit distribution $\nu_\lambda$ does not depend on the choice of the sequence $\{t_n\}$ and therefore we obtain (iii).
	\end{proof}
	
	We give a sufficient condition for (i) of Theorem \ref{main-theorem-03}.
	\begin{Prop} \label{non-minimalQSD_density}
		Assume the hitting densities $f_x$ of $0$ exist, i.e., there exists a non-negative jointly measurable function $f_x(t)$ such that 
		\begin{align}
			\bP_{x}[T_0 \in dt] = f_x(t)dt \quad ( 0 < x < b,\ t > 0). \label{}
		\end{align}
		Let $\mu \in \cP(0,b)$ and assume the function 
		\begin{align}
		f_\mu(t) := \int_{0}^{\infty}f_x(t)\mu(dx) \quad (0 < x < b, \ t > 0) \label{}
		\end{align}
		is differentiable in $t > 0$ and
		\begin{align}
		-\lim_{t \to \infty}\frac{d}{dt}\log f_\mu(t) = \lambda \in (0,\lambda_0]. \label{eq25}
		\end{align}
		Then it holds
		\begin{align}
		\lim_{t \to \infty}\frac{\bP_{\mu}[T_0 > t+s]}{\bP_{\mu}[T_0 > t]} = \mathrm{e}^{-\lambda s} \quad (s > 0). \label{eq54}
		\end{align}
	\end{Prop}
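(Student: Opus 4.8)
The plan is to pass through the density and transfer its asymptotics to the tail by dominated convergence. Write $\bar F(t) := \bP_{\mu}[T_0 > t]$. By Fubini and the hypothesis that hitting densities exist, $\bP_{\mu}[T_0 \in dt] = f_\mu(t)\,dt$, and since we are in the setting of Theorem \ref{QSD-char01} (so $s(b) = \infty$ and $\bP_{\mu}[T_0 = \infty] = 0$) there is no atom at infinity. Hence
\[
\bar F(t) = \int_t^\infty f_\mu(u)\,du = \int_0^\infty f_\mu(t+s)\,ds,
\]
and the whole problem is reduced to understanding the behavior of the density ratio $f_\mu(t+s)/f_\mu(t)$.

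First I would establish the pointwise density asymptotics: for each fixed $s > 0$,
\[
\frac{f_\mu(t+s)}{f_\mu(t)} \xrightarrow[t\to\infty]{} \mathrm{e}^{-\lambda s}.
\]
Since $f_\mu$ is positive and differentiable, $\log f_\mu$ is differentiable, and by the mean value theorem $\log f_\mu(t+s) - \log f_\mu(t) = s\,(\log f_\mu)'(\xi_t)$ for some $\xi_t \in (t,t+s)$. As $t \to \infty$ we have $\xi_t \to \infty$, so $(\log f_\mu)'(\xi_t) \to -\lambda$ by \eqref{eq25}, giving the claim. The same identity yields a uniform one-sided bound: fixing $0 < \epsilon < \lambda$, there is $T_\epsilon$ with $(\log f_\mu)'(u) \le -\lambda + \epsilon$ for all $u \ge T_\epsilon$, whence for every $t \ge T_\epsilon$ and all $s \ge 0$,
\[
\frac{f_\mu(t+s)}{f_\mu(t)} \le \mathrm{e}^{(-\lambda+\epsilon)s},
\]
which is integrable in $s$.

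Next I would transfer this to the tail. From $\bar F(t)/f_\mu(t) = \int_0^\infty \frac{f_\mu(t+s)}{f_\mu(t)}\,ds$, applying dominated convergence with the bound above gives
\[
\lim_{t\to\infty}\frac{\bar F(t)}{f_\mu(t)} = \int_0^\infty \mathrm{e}^{-\lambda s}\,ds = \frac{1}{\lambda},
\]
that is, $\bar F(t) \sim f_\mu(t)/\lambda$. Decomposing
\[
\frac{\bar F(t+s)}{\bar F(t)} = \frac{\bar F(t+s)}{f_\mu(t+s)}\cdot\frac{f_\mu(t+s)}{f_\mu(t)}\cdot\frac{f_\mu(t)}{\bar F(t)}
\]
and letting $t \to \infty$ yields $\tfrac{1}{\lambda}\cdot \mathrm{e}^{-\lambda s}\cdot \lambda = \mathrm{e}^{-\lambda s}$, which is exactly \eqref{eq54}.

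The main obstacle is the interchange of limit and integral in the tail computation: the pointwise density convergence alone does not control the contribution of large $s$, so the crux is securing the uniform exponential domination $f_\mu(t+s)/f_\mu(t) \le \mathrm{e}^{(-\lambda+\epsilon)s}$ valid simultaneously for all large $t$. This rests entirely on the one-sided bound on the logarithmic derivative, which is precisely what \eqref{eq25} supplies (together with the implicit positivity $f_\mu > 0$ needed to define $\log f_\mu$). Everything else — the Fubini identification of $f_\mu$, the mean value argument, and the final product decomposition — is routine.
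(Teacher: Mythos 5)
Your proof is correct, but it follows a genuinely different route from the paper's. The paper sets $g(u) = f_\mu(\log u)$, observes that \eqref{eq25} gives $ug'(u)/g(u) \to -\lambda$, invokes Lamperti's theorem \cite{Lamperti} to conclude that $g$ is regularly varying at infinity with index $-\lambda$, and then transfers this density asymptotic to the tail ratio by L'H\^opital's rule. You instead prove the needed density control by hand: the mean value theorem applied to $\log f_\mu$ gives both the pointwise limit $f_\mu(t+s)/f_\mu(t) \to \mathrm{e}^{-\lambda s}$ and the uniform domination $f_\mu(t+s)/f_\mu(t) \le \mathrm{e}^{(-\lambda+\eps)s}$ for all large $t$, after which dominated convergence yields $\bP_\mu[T_0>t] \sim f_\mu(t)/\lambda$ and the product decomposition finishes. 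In effect, your MVT bound re-proves the special case of Lamperti's theorem that the paper cites, and your DCT step replaces L'H\^opital. What your version buys is self-containedness (no regular-variation machinery) plus the explicit intermediate asymptotic $\bP_\mu[T_0>t]\sim f_\mu(t)/\lambda$; what the paper's version buys is brevity and a conceptual link to Remark \ref{counterexample}, which explains why the derivative condition \eqref{eq25} (rather than $\log f_\mu(t)\sim-\lambda t$) is what makes the regular-variation conclusion — and hence \eqref{eq54} — go through. Both arguments correctly rely, explicitly in your case and implicitly in the paper's use of L'H\^opital, on $\bP_\mu[T_0=\infty]=0$, which follows from $s(b)=\infty$ in the standing assumptions of the section, and both need the positivity of $f_\mu$ for large $t$ implicit in \eqref{eq25}.
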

	
	\begin{proof}
		Set $g(u) = f_\mu(\log u)$ for $u > 1$.
		From \eqref{eq25}, we have 
		\begin{align}
		\lim_{t \to \infty}\frac{tg'(t)}{g(t)} = \lim_{t \to \infty}\frac{\mathrm{e}^t g'(\mathrm{e}^t)}{g(\mathrm{e}^t)} = -\lambda. \label{}
		\end{align}
		Then from \cite[Theorem 2]{Lamperti}, the function $g$ varies regularly at $\infty$ with exponent $-\lambda$.
		From L'H\^opital's rule, we have for $u = \mathrm{e}^s > 1$
		\begin{align}
		\lim_{t \to \infty}\frac{\bP_{\mu}[T_0 > t +\log u]}{\bP_{\mu}[T_0 > t]} 
		= \lim_{t \to \infty}\frac{f_\mu(t+ \log u)}{f_\mu(t)} = \lim_{t \to \infty}\frac{g(\mathrm{e}^t u)}{g(\mathrm{e}^t)} = u^{-\lambda} = \mathrm{e}^{-\lambda s}. \label{}
		\end{align}		
	\end{proof}

	\begin{Rem} \label{counterexample}
		We may expect that Proposition \ref{non-minimalQSD_density} would be extended with \eqref{eq25} being replaced by 
		\begin{align}
			\log f_\mu(t) \sim -\lambda t \quad (t \to \infty), \label{eq56}
		\end{align}
		which is weaker than \eqref{eq25} by L'H\^opital's rule.
		In general, however, it does not hold.
		We give a counterexample which satisfies \eqref{eq56} but does not \eqref{eq54}. 
		Let us find a positive function $f$ of the form
		\begin{align}
			f(t) = \mathrm{e}^{(-\lambda + \eps(t))t} \label{}
		\end{align}
		with a function $\eps(t)$ varnishing at $\infty$ but not satisfying
		\begin{align}
			\frac{\int_{t+s}^{\infty}f(u)du}{\int_{t}^{\infty}f(u)du} \xrightarrow[t \to \infty]{} \mathrm{e}^{-\lambda s}  \quad (s > 0). \label{eq57}
		\end{align}
		By the change of variables, we can see that \eqref{eq57} is equivalent to that the function
		\begin{align}
			h(t) := \int_{t}^{\infty}u^{-\lambda - 1 + \eps(\log u) }du \label{eq58}
		\end{align}
		varies regularly with exponent $-\lambda $ at $\infty$.
		If the function $\eps$ is non-increasing, by the monotone density theorem \cite[Theorem 1.7.2]{Regularvariation} it is equivalent to the slow variation of 
		\begin{align}
			k(s) = s^{\eps(\log s)} \quad (s > 1). \label{}
		\end{align}
		We now set
		\begin{align}
			\eps(s) = 2^{-n} \quad (4^n < s \leq 4^{n+1}, n \in \bN), \label{}
		\end{align}
		and then the function $\eps$ vanishes at $\infty$ and
		\begin{align}
			\frac{k(\mathrm{e} \cdot \exp(4^n))}{k(\exp(4^n))} = \frac{\exp(2^{n} + 2^{-n})}{\exp(2^{n+1})}= \exp (-2^n + 2^{-n}) \xrightarrow[n \to \infty]{} 0. \label{}
		\end{align}
		So the function $k$ does not vary slowly.
	\end{Rem}

	We give a sufficient condition for existence of the hitting densities of $0$. 
	For this purpose, we need the following condition on decay of the spectral measure $\sigma$ of $-\frac{d}{dm}\frac{d}{ds}$:
	\begin{align}
	\mathrm{(S)} \quad \int_{0}^{\infty}\mathrm{e}^{-\lambda t} \sigma(d\lambda) < \infty \quad
	(t > 0). \label{condS}
	\end{align}
	A sufficient condition for $\mathrm{(S)}$ is as follows:
	\begin{Prop}[{\cite{report}}]\label{main-appendix}
		Let $m$ be a speed measure and $s$ be a scale function on $(0,b) \ (0 < b \leq \infty)$.
		Then if $|s(0)| < \infty$ and
		\begin{align}
		m(x,c] \leq C (s(x) - s(0))^{-\delta} \quad (0 < x < c) \label{} 
		\end{align}
		for some $C > 0$, $0 < c < b$ and $0 < \delta < 1$,
		the condition $\mathrm{(S)}$ holds.
	\end{Prop}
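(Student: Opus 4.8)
The plan is to reduce the condition $\mathrm{(S)}$ to a polynomial growth bound on the spectral distribution function $\Lambda \mapsto \sigma([0,\Lambda])$, and to obtain that bound by coupling the spatial scale near the boundary $0$ with the spectral scale. Throughout write $\phi(x) := s(x) - s(0)$, which by $|s(0)| < \infty$ is finite and decreases to $0$ as $x \to +0$, and set $A(x) := \int_{0}^{x}\phi(z)\,dm(z)$. First I would record the elementary reduction: by Tonelli,
\[
\int_{0}^{\infty}\mathrm{e}^{-\lambda t}\sigma(d\lambda) = t\int_{0}^{\infty}\mathrm{e}^{-ut}\sigma([0,u])\,du,
\]
so $\mathrm{(S)}$ holds for every $t > 0$ as soon as $\sigma([0,\Lambda]) \leq C(1 + \Lambda^{p})$ for some $p < \infty$.

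The first quantitative input is a Volterra-type control of the eigenfunctions. From \eqref{eq63} the function $\psi_{-\xi}$ satisfies $\psi_{-\xi}(x) = \phi(x) - \xi\int_{0}^{x}ds(y)\int_{0}^{y}\psi_{-\xi}(z)\,dm(z)$, and iterating this kernel (using $\int_{0}^{x}ds(y)\int_{0}^{y}\phi\,dm \leq \phi(x)A(x)$ and induction) yields the two-sided estimate
\[
|\psi_{-\xi}(x)| \leq \phi(x)\,\mathrm{e}^{\xi A(x)}, \qquad |\psi_{-\xi}(x) - \phi(x)| \leq \xi\,\phi(x)\,A(x)\,\mathrm{e}^{\xi A(x)} \quad (\xi \geq 0);
\]
in particular $\psi_{-\xi}(x) \geq \tfrac{1}{2}\phi(x)$ whenever $\xi A(x)$ is below an absolute constant. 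The same iteration applied to $\psi_{1}$ gives $\psi_{1}(x) \leq \phi(x)\mathrm{e}^{A(x)}$, so, since $g_{1}$ is non-increasing with $g_{1}(0+) = 1$, the resolvent diagonal obeys $G_{1}(x,x) = \psi_{1}(x)g_{1}(x) \leq \phi(x)\mathrm{e}^{A(x)}$. The second input is the hypothesis: integrating by parts, dropping the nonpositive boundary term (note $\phi(x)m(x,c] \leq C\phi(x)^{1-\delta} \to 0$), and substituting $u = \phi(z)$ gives
\[
A(x) = \int_{0}^{x}\phi\,dm \leq \int_{0}^{x} m(z,c]\,ds(z) \leq C\int_{0}^{\phi(x)} u^{-\delta}\,du = \frac{C}{1-\delta}\,\phi(x)^{1-\delta},
\]
where $\delta < 1$ is exactly what makes $u^{-\delta}$ integrable at $0$; hence $A(x) \to 0$ as $x \to +0$.

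Now I would combine these. Given large $\Lambda$, choose $x_{\Lambda} \to 0$ with $\phi(x_{\Lambda})$ of order $\Lambda^{-1/(1-\delta)}$, precisely so that $\Lambda A(x_{\Lambda})$ stays below the constant above; then $\psi_{-\xi}(x_{\Lambda}) \geq \tfrac{1}{2}\phi(x_{\Lambda})$ for every $\xi \in [0,\Lambda]$. Restricting the spectral representation $G_{1}(x,x) = \int_{0}^{\infty}(1+\xi)^{-1}\psi_{-\xi}(x)^{2}\sigma(d\xi)$ to $[0,\Lambda]$ (all integrands being non-negative) and using the resolvent upper bound gives, for $\Lambda$ large,
\[
\frac{\phi(x_{\Lambda})^{2}}{4(1+\Lambda)}\,\sigma([0,\Lambda]) \leq G_{1}(x_{\Lambda},x_{\Lambda}) \leq 2\phi(x_{\Lambda}),
\]
so $\sigma([0,\Lambda]) \leq 8(1+\Lambda)/\phi(x_{\Lambda})$, which is polynomial in $\Lambda$ by the choice of $x_{\Lambda}$. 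By the first step this establishes $\mathrm{(S)}$.

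The main difficulty is the coupling of the two scales. Because the eigenfunctions $\psi_{-\xi}$ change sign once $\xi$ exceeds $\lambda_0$, the lower bound $\psi_{-\xi}(x) \geq \tfrac{1}{2}\phi(x)$ can hold only on a spectral window whose width grows as $x \to 0$, and one must quantify this window (via $\xi A(x) \lesssim 1$) sharply enough that $\phi(x_{\Lambda})^{-1}$ remains a fixed power of $\Lambda$; this is precisely where $\delta < 1$ is used, since it is what forces $A(x) \lesssim \phi(x)^{1-\delta}$ and hence makes the admissible window a positive power of $1/\phi(x)$. A second point requiring care is that the a priori upper bound on $\sigma([0,\Lambda])$ must be extracted from the resolvent diagonal $G_{1}(x,x) = \psi_{1}(x)g_{1}(x)$, and not from the heat kernel: the seemingly shorter route through $p(t,x,x) = \int_{0}^{\infty}\mathrm{e}^{-\xi t}\psi_{-\xi}(x)^{2}\sigma(d\xi)$ is circular, as bounding $p(t,x,x)$ by the eigenfunction estimate reintroduces the very integral $\int \mathrm{e}^{-\xi t}\sigma(d\xi)$ one is trying to control.
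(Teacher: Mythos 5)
The paper does not prove this proposition at all---it explicitly defers the proof to the technical report \cite{report}---so there is no in-paper argument to compare yours against; judged on its own terms, your proof is correct and, usefully, self-contained within the paper's framework. All three of your ingredients are available from the paper: the Volterra (twice-integrated) form of \eqref{eq63}, the Green function $G_\lambda(x,y)=\psi_\lambda(x)g_\lambda(y)$ with $g_\lambda$ non-increasing and $g_\lambda(0+)=1$, and its spectral representation, which at $x=y=x_\Lambda$, $\lambda=1$ gives your sandwich
\begin{align*}
\frac{\phi(x_\Lambda)^2}{4(1+\Lambda)}\,\sigma([0,\Lambda]) \;\leq\; G_1(x_\Lambda,x_\Lambda) \;\leq\; \phi(x_\Lambda)\,\mathrm{e}^{A(x_\Lambda)}.
\end{align*}
The estimate $A(x)\leq \frac{C}{1-\delta}\,\phi(x)^{1-\delta}$ is exactly where the hypothesis (with $\delta<1$) enters, and choosing $\phi(x_\Lambda)\asymp\Lambda^{-1/(1-\delta)}$ does keep $\Lambda A(x_\Lambda)$ below a fixed small constant, so that $\psi_{-\xi}(x_\Lambda)\geq\phi(x_\Lambda)/2$ uniformly for $\xi\in[0,\Lambda]$; this yields $\sigma([0,\Lambda])=O(\Lambda^{1+1/(1-\delta)})$, which suffices for $\mathrm{(S)}$ by your Tonelli reduction (small $\Lambda$ being covered by $\sigma$ Radon). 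Only one technical point should be written out more carefully in a final version: the induction producing the $1/n!$ factors in $u_n\leq\phi A^n/n!$, where $u_0=\phi$ and $u_{n+1}(x)=\int_0^x ds(y)\int_0^y u_n\,dm$, rests on the chain-rule inequality $\int_0^x A^n\,dA\leq A(x)^{n+1}/(n+1)$; when the speed measure has atoms this requires evaluating $A$ at left limits inside the integral (with right-continuous evaluation it can fail at an atom), but $\int_0^x A(z-)^n\,dA(z)\leq A(x)^{n+1}/(n+1)$ always holds, and since $ds$ charges no countable set the Fubini manipulations are unaffected, so the bounds $|\psi_{-\xi}|\leq\phi\,\mathrm{e}^{\xi A}$ and $|\psi_{-\xi}-\phi|\leq\xi\phi A\,\mathrm{e}^{\xi A}$ stand. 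Your closing observation that the heat-kernel route is circular is also correct; the resolvent detour is the right way around it.
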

	The proof of Proposition \ref{main-appendix} is given in \cite{report}.
	The following result by Yano \cite{Yano:Excusionmeasure} gives existence and a spectral representation of the hitting densities.
	\begin{Prop}[{\cite[Proposition 2.1]{Yano:Excusionmeasure}}] \label{hitting_density}
		Assume $\mathrm{(S)}$ holds.
		Then for any $0 < x < b$ the distribution of $T_0$ under $\bP_{x}$ has a density $f_x(t)$ on $(0,\infty)$ w.r.t.\ Lebesgue measure, that is, the following hold:
		\begin{align}
		\bP_x[T_0 \in dt] = f_x(t)dt  \quad (0 < x < b, \ t > 0). \label{eq19}
		\end{align}
		The hitting densities have a spectral representation:
		\begin{align}
		f_x(t) = \int_{0}^{\infty}\mathrm{e}^{-\lambda t}\psi_{-\lambda}(x)\sigma(d\lambda) \quad (0 < x < b,\ t > 0). \label{eq20}
		\end{align}
		and have another representation:
		\begin{align}
		f_{x}(t) = \left.\frac{d}{ds(y)}p(t,x,y)\right|_{y=0} \quad (0 < x < b, \ t > 0). \label{}
		\end{align}
	\end{Prop}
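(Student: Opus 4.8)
The first point to settle is that the candidate right-hand side of \eqref{eq20}, namely $f_x(t):=\int_0^\infty \mathrm{e}^{-\lambda t}\psi_{-\lambda}(x)\,\sigma(d\lambda)$, is an absolutely convergent integral for each fixed $x\in(0,b)$ and $t>0$. I would base this on the elementary comparison $|\psi_{-\lambda}(x)|\le\psi_\lambda(x)$: both functions solve \eqref{eq63} with eigenvalue $\pm\lambda$, so they admit Volterra (Picard) expansions $\psi_{\pm\lambda}(x)=\sum_{n\ge0}(\pm\lambda)^n a_n(x)$ with the \emph{same} nonnegative coefficients $a_n(x)$ (with $a_0(x)=s(x)-s(0)$), and the alternating series is therefore dominated in modulus by the positive one. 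Writing $\mathrm{e}^{-\lambda t}=\mathrm{e}^{-\lambda t/2}\mathrm{e}^{-\lambda t/2}$ and applying Cauchy--Schwarz together with the diagonal spectral representation $\int_0^\infty \mathrm{e}^{-\lambda t}\psi_{-\lambda}(x)^2\,\sigma(d\lambda)=p(t,x,x)$,
\begin{align}
\int_0^\infty \mathrm{e}^{-\lambda t}|\psi_{-\lambda}(x)|\,\sigma(d\lambda)\le p(t,x,x)^{1/2}\left(\int_0^\infty \mathrm{e}^{-\lambda t}\,\sigma(d\lambda)\right)^{1/2},
\end{align}
which is finite because $p(t,x,x)<\infty$ and $\mathrm{(S)}$ holds; this is the one place where \eqref{condS} enters.

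Next I would produce the density together with the third representation by a forward-equation (Green's identity) computation, bypassing any differentiation of a spectral integral for this step. Since $p(t,x,\cdot)$ is the density with respect to $dm$ of the subprobability $\bP_x[X_t\in\,\cdot\,,\,T_0>t]$, one has $\bP_x[T_0>t]=\int_{(0,b)}p(t,x,y)\,dm(y)$. Differentiating in $t$, using $\partial_t p=\tfrac{d}{dm(y)}\tfrac{d}{ds(y)}p$ and the fundamental theorem for the $m$-derivative,
\begin{align}
\frac{d}{dt}\bP_x[T_0>t]=\Big[\tfrac{d}{ds(y)}p(t,x,y)\Big]_{y=0^+}^{y=b^-}=-\,\tfrac{d}{ds(y)}p(t,x,y)\big|_{y=0^+},
\end{align}
the boundary contribution at $b$ vanishing because $b$ is natural (reflecting when regular). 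The right-hand side is continuous in $t$, so $t\mapsto\bP_x[T_0>t]$ is $C^1$ and $T_0$ has the density $f_x(t)=\tfrac{d}{ds(y)}p(t,x,y)|_{y=0^+}$, which is \eqref{eq19} and the third representation.

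To reach \eqref{eq20} I would insert the spectral representation of $p$ into this flux formula and differentiate termwise in $s(y)$ at $y=0$. Since $\psi_{-\lambda}(0)=0$ and $\tfrac{d}{ds}\psi_{-\lambda}(0)=1$ by \eqref{eq63}, the formal outcome is $f_x(t)=\int_0^\infty \mathrm{e}^{-\lambda t}\psi_{-\lambda}(x)\,\sigma(d\lambda)$, matching the integral of the first paragraph. The interchange of $\tfrac{d}{ds(y)}|_{y=0}$ with the integral is the crux, and I would justify it by dominated convergence applied to the difference quotients $\psi_{-\lambda}(y)/(s(y)-s(0))$: these are bounded in modulus by $\psi_\lambda(y)/(s(y)-s(0))\le \mathrm{e}^{\lambda C(\delta)}$ for $y\in(0,\delta]$, where $C(\delta)\to0$ as $\delta\to0$ (finite because $0$ is regular or exit, so that $J(0)<\infty$). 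Choosing $\delta$ small enough that $t-C(\delta)>0$ makes the dominating integrand $\mathrm{e}^{-\lambda t}\psi_\lambda(x)\mathrm{e}^{\lambda C(\delta)}=\psi_\lambda(x)\mathrm{e}^{-\lambda(t-C(\delta))}$ integrable against $\sigma$ by $\mathrm{(S)}$, since $\psi_\lambda(x)$ grows subexponentially in $\lambda$.

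The main obstacle is precisely the analytic justification of the two limiting operations at the left boundary: the differentiation of $\bP_x[T_0>t]=\int p\,dm$ in $t$ with the flux identity of the second paragraph, and the termwise scale-differentiation at $y=0$ of the third. Both hinge on controlling $p$ and its scale derivative uniformly as $y\to0$, together with the vanishing of the flux at the natural boundary $b$, and in each case the regularity that legitimizes these manipulations is exactly the smoothing furnished by $\mathrm{(S)}$; everything else is bookkeeping.
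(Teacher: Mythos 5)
There is nothing in the paper to compare against: the paper does not prove this proposition but imports it verbatim from \cite[Proposition 2.1]{Yano:Excusionmeasure}, so your argument has to stand on its own. Your first paragraph does stand: the comparison $|\psi_{-\lambda}(x)|\le\psi_{\lambda}(x)$ from the common Volterra expansion, combined with Cauchy--Schwarz and the diagonal identity $\int_0^\infty \mathrm{e}^{-\lambda t}\psi_{-\lambda}(x)^2\sigma(d\lambda)=p(t,x,x)$, is a correct and clean way to get absolute convergence of the spectral integral, and it is indeed where $\mathrm{(S)}$ enters. Your third paragraph is also essentially right, with one blemish: the assertion that ``$\psi_{\lambda}(x)$ grows subexponentially in $\lambda$'' is a genuine theorem (the order-$\le 1/2$ property of the entire function $\lambda\mapsto\psi_{\lambda}(x)$ from Krein's string theory), not a remark; you can sidestep it by dominating the difference quotients with $\mathrm{e}^{-\lambda(t-C(\delta))}|\psi_{-\lambda}(x)|$ and reusing your paragraph-1 Cauchy--Schwarz bound at time $t-C(\delta)>0$.

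The genuine gap is your second paragraph --- the step that actually produces the density --- together with your closing claim that its justification is ``bookkeeping'' furnished by $\mathrm{(S)}$. It is not: $\mathrm{(S)}$ controls the $\lambda$-integral, whereas the two operations you need, interchanging $\frac{d}{dt}$ with $\int_{(0,b)}\cdot\,dm(y)$ and killing the flux $\frac{d}{ds(y)}p(t,x,y)$ as $y\to b$, require control of $p(t,x,y)$ in $y$ near the natural boundary $b$, about which $\mathrm{(S)}$ says nothing. The obvious dominations demonstrably fail within the scope of the proposition. For absorbed Brownian motion on $[0,\infty)$ (with $dm=2dy$, $s(x)=x$, which satisfies $\mathrm{(S)}$) one has $\psi_{-\lambda}(y)=\sin(\sqrt{2\lambda}\,y)/\sqrt{2\lambda}$, which is not $dm$-integrable, so absolute values cannot be pushed through $\int\sigma(d\lambda)\int dm(y)$; the Cauchy--Schwarz bound $|\partial_t p(t,x,y)|\le C_t(x)\,p(t,y,y)^{1/2}$ does not help since $\int p(t,y,y)^{1/2}dm(y)=\infty$; and even the Laplace-transform identity $\int_0^\infty\frac{\psi_{-\lambda}(x)}{\gamma+\lambda}\sigma(d\lambda)=\bE_x[\mathrm{e}^{-\gamma T_0}]$, which is the natural alternative route, holds in this example only as a conditionally convergent integral, so it too cannot be reached by naive Fubini. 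In short, the ``two limiting operations'' you defer are precisely the analytic substance of the cited proposition, and your proposal leaves them unproved. A complete argument must either carry out a genuinely quantitative analysis of $p$ near both boundaries, or avoid the $t$-differentiation altogether, e.g.\ by proving $\int_\eps^\infty\mathrm{e}^{-\gamma t}f_x(t)\,dt = \int_0^\infty\frac{\mathrm{e}^{-(\gamma+\lambda)\eps}}{\gamma+\lambda}\psi_{-\lambda}(x)\,\sigma(d\lambda)$ via absolutely convergent Fubini and then identifying the $\eps\downarrow 0$ limit with $g_\gamma(x)=\bE_x[\mathrm{e}^{-\gamma T_0}]$; but that identification is itself the delicate point, for the same conditional-convergence reason.
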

	
	\section{Hitting densities of Kummer diffusions with negative drifts} \label{section:examplesofFHP}
	

	
	Let us give the hitting densities of Kummer diffusions with negative drifts.
	
	At first we give a speed measure and a scale function for Kummer diffusions with negative drifts.
	Fix $\alpha > 0$ and $\beta \in \bR$. From \eqref{eq5}, we have
	\begin{align}
	\cL^{(0)} = \cL^{(\alpha,\beta)} = x\frac{d^2}{dx^2} + (-\alpha + 1 - \beta x) \frac{d}{dx} = \frac{d}{dm^{(0)}}\frac{d}{ds^{(0)}}
	\end{align}
	with
	\begin{align}
	dm^{(0)}(x) := dm^{(\alpha,\beta)}(x) = x^{-\alpha}\mathrm{e}^{-\beta x}dx, \quad	ds^{(0)}(x) := ds^{(\alpha,\beta)}(x) = x^{\alpha - 1}\mathrm{e}^{\beta x}dx. \label{speedmeasure_scalefunction}
	\end{align}
	In addition, for $\gamma \geq 0$, we have 
	\begin{align}
	\cL^{(\gamma)} = \cL^{(\alpha,\beta,\gamma)} = x\frac{d^2}{dx^2} + \left(-\alpha + 1 - \beta x + \frac{xg_\gamma'(x)}{g_\gamma(x)}\right) \frac{d}{dx} = \frac{d}{dm^{(\gamma)}}\frac{d}{ds^{(\gamma)}} \label{}
	\end{align}
	with
	\begin{align}
	dm^{(\gamma)} = g_\gamma^2 dm^{(0)} \quad ds^{(\gamma)} = g_\gamma^{-2}ds^{(0)}, \label{h-transformdensity}
	\end{align}
	where $g_\gamma$ is the function given in \eqref{eq75}.
	Note that since $g_\gamma(0) = 1$, the classification of the boundary $0$ for $\cL^{(\gamma)}$ does not depend on $\gamma \geq 0$. The boundary $\infty$ for $\cL^{(\gamma)}$ is always natural, which we will see in Proposition \ref{classification_of_boundary}.
	We also have
	\begin{align}
		\cL^{(\gamma)} = \cL^{(0)} + \frac{x g_\gamma'}{g_\gamma}\frac{d}{dx} \label{}
	\end{align}
	and, by the obvious relation
	\begin{align}
	\tilde{g}_\gamma(x) = g_\gamma(x^2/2), \label{}
	\end{align}
	it follows
	\begin{align}
	\tilde{\cL}^{(\alpha,\beta,\gamma)} 
	= \tilde{\cL}^{(\alpha,\beta,0)} +  \frac{\tilde{g}_\gamma'}{\tilde{g}_\gamma}\frac{d}{dx}, \label{}
	\end{align}
	which implies \eqref{eq47}.
	
	We summarize several results on the hitting densities for Kummer diffusions with negative drifts.
	Note that from \eqref{speedmeasure_scalefunction} and Proposition \ref{main-appendix}, the condition $\mathrm{(S)}$ holds for $\frac{d}{dm^{(0)}}\frac{d}{ds^{(0)}}$. 
	
%
	
	\begin{Thm}\label{hitting-density-example}
		For the process $Y^{(\alpha,\beta,\gamma)} \ (\alpha > 0 , \ \beta \in \bR, \ \gamma \geq 0)$, the hitting densities $f^{(\gamma)}_{x}$ of $0$ and the spectral measure $\sigma^{(\gamma)}$ for $\cL^{(\gamma)}$ are given as
		\begin{align}
			f^{(\gamma)}_{x}(t) = \frac{\mathrm{e}^{-\gamma t}}{g_\gamma(x)}f^{(0)}_x(t) \quad (0 < x < \infty, \ t > 0) \label{eq77}
		\end{align}
		and
		\begin{align}
			\sigma^{(\gamma)}(d\lambda) = \sigma^{(0)}(d(\lambda - \gamma)) \label{eq80}
		\end{align}
		for
		\begin{align}
		f^{(0)}_{x}(t) = \left\{
		\begin{aligned}
		&\frac{1}{\Gamma (\alpha)}{x^{\alpha}t^{-\alpha - 1}}\mathrm{e}^{-x/t} &  ( \beta = 0), \\
		&\frac{x^{\alpha}\mathrm{e}^{\beta t}}{\Gamma(\alpha)}\left(\frac{ \beta \mathrm{e}^{-\beta t}}{1 - \mathrm{e}^{-\beta t}}\right)^{1+\alpha}\exp \left( \frac{-x\beta \mathrm{e}^{-\beta t}}{1 - \mathrm{e}^{-\beta t}} \right) & ( \beta \neq 0), \label{}
		\end{aligned}
		\right. \label{eq76}
		\end{align}
		and
		\begin{align}
			\sigma^{(0)}(d\lambda) = 
			\left\{
			\begin{aligned}
				&\beta^{\alpha+1}\sum_{n=0}^{\infty} \frac{(\alpha)_{n+1}}{n! \Gamma(\alpha)}\delta_{\beta(n + \alpha)}(d\lambda) & (\beta > 0), \\
				&\frac{1}{\Gamma(\alpha)^2}\lambda^{\alpha}d\lambda & (\beta = 0), \\
				&(-\beta)^{\alpha+1}\sum_{n=0}^{\infty} \frac{(\alpha)_{n+1}}{n! \Gamma(\alpha)}\delta_{(-\beta)(n + 1)}(d\lambda) & (\beta < 0),
			\end{aligned}
			\right. \label{eq81}
		\end{align}
		where $(a)_k \ (a \in \bR, \ k \in \bN)$ is a Pochhammer symbol
		\begin{align}
		(a)_k = a(a+1)\cdots (a + k - 1). \label{}
		\end{align}
		In particular, we have
		\begin{align}
			\lambda^{(\gamma)}_0 = \left\{ 
			\begin{aligned}
				&\alpha \beta + \gamma & (\beta > 0), \\
				&\gamma & (\beta = 0), \\
				&-\beta + \gamma & (\beta < 0).
			\end{aligned}
			\right. \label{}
		\end{align}
	\end{Thm}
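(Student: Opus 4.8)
The plan is to reduce the whole statement to the case $\gamma = 0$ by means of the Doob ($h$-) transform, and then to extract the explicit densities and spectral measures from the generating function of the Laguerre polynomials.

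First I would prove \eqref{eq77}. Since $\cL^{(0)}g_\gamma = \gamma g_\gamma$ with $g_\gamma(0) = 1$, the process $M_t = \mathrm{e}^{-\gamma t}g_\gamma(Y^{(0)}_t)$ is a $\bP^{(0)}_x$-martingale stopped at $T_0$, and by \eqref{eq74} the law of $Y^{(\gamma)}$ is obtained from $Y^{(0)}$ through the change of measure $d\bP^{(\gamma)}_x = (M_{\cdot}/M_0)\,d\bP^{(0)}_x$ on the stopped filtration. Applying optional stopping at $T_0 \wedge t$ and using $M_{T_0} = \mathrm{e}^{-\gamma T_0}g_\gamma(0) = \mathrm{e}^{-\gamma T_0}$, I obtain
\[
\bP^{(\gamma)}_x[T_0 \leq t] = \frac{1}{g_\gamma(x)}\bE^{(0)}_x\!\left[\mathrm{e}^{-\gamma T_0}\,1\{T_0 \leq t\}\right] = \frac{1}{g_\gamma(x)}\int_0^t \mathrm{e}^{-\gamma s}f^{(0)}_x(s)\,ds,
\]
and differentiating in $t$ gives \eqref{eq77}.

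For \eqref{eq80} I would argue spectrally. By \eqref{h-transformdensity} the multiplication map $Uf = g_\gamma f$ is a unitary isomorphism $L^2((0,\infty),dm^{(\gamma)}) \to L^2((0,\infty),dm^{(0)})$, and by \eqref{eq74} it conjugates $\cL^{(\gamma)}$ into $\cL^{(0)} - \gamma$; hence $-\cL^{(\gamma)}$ is unitarily equivalent to $-\cL^{(0)} + \gamma$, so its spectral measure is that of $-\cL^{(0)}$ translated by $\gamma$. Matching this translation with the eigenfunction identity $\psi^{(\gamma)}_{-\mu} = \psi^{(0)}_{-(\mu-\gamma)}/g_\gamma$, which one checks directly from \eqref{eq63} and \eqref{eq74} (the normalization at $0$ being preserved because $g_\gamma(0) = 1$), yields \eqref{eq80}. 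Equivalently, one substitutes \eqref{eq77} into the spectral representation \eqref{eq20} for both parameters and invokes the uniqueness of $\sigma$.

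It remains to treat $\gamma = 0$. I would first obtain $f^{(0)}_x$ in \eqref{eq76} either from the classical hitting laws of the underlying processes — for $\beta = 0$ the process is a time-changed squared Bessel process of dimension $2(1-\alpha)$, whose hitting time of $0$ is inverse-gamma, and for $\beta \neq 0$ it is a radial Ornstein--Uhlenbeck process — or by solving $\cL^{(0)}g^{(0)}_\gamma = \gamma g^{(0)}_\gamma$ for its decreasing solution (a Macdonald function $K_\alpha$ when $\beta = 0$, a Tricomi function $U$ when $\beta \neq 0$) and inverting the Laplace transform. To read off $\sigma^{(0)}$ when $\beta \neq 0$, I set $q = \mathrm{e}^{-\beta t}$ in \eqref{eq76} and recognize the Laguerre generating function
\[
\frac{1}{(1-q)^{1+\alpha}}\exp\!\left(-\frac{zq}{1-q}\right) = \sum_{n=0}^{\infty} L^{(\alpha)}_n(z)\,q^n, \qquad z = |\beta|x,
\]
which expands $f^{(0)}_x(t)$ as a Dirichlet series $\sum_n \mathrm{e}^{-\lambda_n t}c_n(x)$ with $\lambda_n = \beta(n+\alpha)$ (case $\beta > 0$) or $\lambda_n = (-\beta)(n+1)$ (case $\beta < 0$). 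Comparing with \eqref{eq20} and using that each coefficient $c_n(x)$ must be proportional to the normalized eigenfunction $\psi^{(0)}_{-\lambda_n}$ (a one-dimensional space under the boundary condition at $0$), the atom $\sigma^{(0)}(\{\lambda_n\})$ is pinned down by the value $L^{(\alpha)}_n(0) = (\alpha+1)_n/n!$ that fixes the normalization of $\psi^{(0)}_{-\lambda_n}$, producing the Pochhammer weights in \eqref{eq81} after the identity $(\alpha)_{n+1} = \alpha(\alpha+1)_n$. For $\beta = 0$ the eigenfunction is $\psi^{(0)}_{-\lambda}(x) = \tfrac{1}{\alpha}x^\alpha\,{}_0F_1(;1+\alpha;-\lambda x)$, and a term-by-term Laplace inversion of $\int_0^\infty \mathrm{e}^{-\lambda t}\psi^{(0)}_{-\lambda}(x)\lambda^\alpha\,d\lambda$ reproduces \eqref{eq76}, identifying $\sigma^{(0)}(d\lambda) = \lambda^\alpha/\Gamma(\alpha)^2\,d\lambda$. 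The spectral bottom $\lambda^{(\gamma)}_0$ then reads off as $\gamma$ plus the smallest atom ($\alpha\beta$ or $-\beta$) or the left endpoint $0$.

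The main obstacle is this last step: producing $f^{(0)}_x$ in closed form and, above all, matching the normalizing constants of the spectral weights. This demands care with the confluent-hypergeometric normalizations and with the boundary condition $\tfrac{d}{ds^{(0)}}\psi^{(0)}_{-\lambda}|_{0} = 1$, and one must justify that the formal Dirichlet (resp.\ Laplace) expansion of $f^{(0)}_x$ coincides with the spectral representation \eqref{eq20} — which is legitimate precisely because $\sigma^{(0)}$ is uniquely determined by the operator.
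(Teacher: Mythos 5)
Your proposal is correct in substance, but it follows a genuinely different route from the paper's at almost every step, so it is worth comparing the two. For \eqref{eq77} the paper does not argue via optional stopping: it takes the relation $p^{(\gamma)}(t,x,y) = \mathrm{e}^{-\gamma t}p^{(0)}(t,x,y)/(g_\gamma(x)g_\gamma(y))$ from Takemura--Tomisaki, plugs in the known transition density of the Kummer diffusion, and computes $\bP_x[T_0^{(\gamma)}>t]$ by an integral manipulation; your martingale change-of-measure argument is cleaner, but note that the identification of the generator-defined $h$-transform \eqref{eq74} with the change of measure $d\bP^{(\gamma)}_x = (M_\cdot/M_0)\,d\bP^{(0)}_x$ is precisely the nontrivial fact the paper outsources to that citation, so you should cite or prove it rather than assert it. For \eqref{eq80} the paper again cites Takemura--Tomisaki, and your unitary-conjugation argument (together with the eigenfunction identity $\psi^{(\gamma)}_{-\mu} = \psi^{(0)}_{-(\mu-\gamma)}/g_\gamma$) is a legitimate self-contained substitute. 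The largest divergence is \eqref{eq81} for $\beta \neq 0$: the paper computes $\psi_\lambda$ explicitly as a Kummer function $\frac{1}{\alpha}x^\alpha M(\lambda/\beta+\alpha,1+\alpha;\beta x)$, shows via the asymptotics of $M$ that $\psi_\lambda \in L^2(dm)$ only when it degenerates to a Laguerre polynomial, invokes \emph{completeness} of the Laguerre system to conclude the spectrum is pure point, and obtains the atoms as reciprocal squared $L^2(dm)$-norms via Laguerre orthogonality (reducing $\beta<0$ to $\beta>0$ through the unitary map $f \mapsto \mathrm{e}^{\beta x}f$); you instead expand the already-known density \eqref{eq76} through the Laguerre generating function into a Dirichlet series and match it against \eqref{eq20}. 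Your route buys you freedom from the completeness argument, but it shifts the burden onto a uniqueness claim for the representation \eqref{eq20} with \emph{signed} coefficients (Laguerre polynomials change sign), which requires the observation that no nontrivial signed measure $\rho$ can satisfy $\psi_{-\lambda}(x)\rho(d\lambda)=0$ for all $x$ --- spell this out, since the paper's stated uniqueness is for the Green-function representation. Two smaller points deserve care: your substitution $q=\mathrm{e}^{-\beta t}$ only lies in $(0,1)$ when $\beta>0$; for $\beta<0$ you must take $q=\mathrm{e}^{\beta t}$ and factor out $\mathrm{e}^{\beta x}$ from the exponential, which is exactly what shifts the atoms to $(-\beta)(n+1)$ and multiplies the eigenfunction by $\mathrm{e}^{\beta x}$, consistent with the paper's identity $\psi^{(\alpha,\beta)}_\lambda = \mathrm{e}^{\beta x}\psi^{(\alpha,-\beta)}_{\lambda+\beta(\alpha-1)}$. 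And for $\beta=0$, the paper's trick of applying $\frac{d}{ds(x)}$ at $x=0$ to both sides of \eqref{eq20} reduces the identification to uniqueness of a \emph{scalar} Laplace transform, which is a tidier way to pin down $\sigma^{(0)}(d\lambda)=\lambda^\alpha\Gamma(\alpha)^{-2}d\lambda$ than verifying your candidate measure for every $x$; your verification does work (the ${}_0F_1$ series integrates term by term to $\mathrm{e}^{-x/t}$), but it again leans on the injectivity of $\sigma \mapsto (f_x)_x$.
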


	\begin{Rem}
		From e.g., \cite[Section 3.7]{Specialfunction}, we have
		\begin{align}
		g_{\gamma}(x) = \left\{
		\begin{aligned}
		&\frac{1}{2^{\alpha - 1} \Gamma(\alpha)}(2\sqrt{\gamma x})^{\alpha}K_\alpha(2\sqrt{\gamma x}) &  ( \beta = 0), \\
		&\frac{\Gamma(\alpha + \gamma / \beta)}{\Gamma (\alpha)}
		(\beta x)^{\alpha} U (\alpha + \gamma / \beta, \alpha + 1, \beta x) & ( \beta > 0), \label{} \\
		&\frac{\Gamma(1 - \gamma / \beta)}{\Gamma (\alpha)}
		(-\beta x)^{\alpha} \mathrm{e}^{\beta x}U (1 - \gamma /\beta, \alpha + 1 ; -\beta x) & ( \beta < 0), \label{} \\
		\end{aligned}
		\right. \label{eq62}
		\end{align}
		where $K_\alpha$ denotes the modified Bessel function of the second kind (see e.g., \cite[Section 3.1]{Specialfunction}) and $U$ denotes the Tricomi confluent hypergeometric function:
		\begin{align}
		U(a,b;x) = \frac{1}{\Gamma(a)}\int_{0}^{\infty}\mathrm{e}^{-sx}s^{a-1}(1+s)^{b-a-1}ds \quad (a > 0, \ b\in \bR, \ x > 0). \label{}
		\end{align}
		Note that
		\begin{align}
		K_\alpha(x) \sim 2^{\alpha -1}\Gamma(\alpha) x^{-\alpha}, \quad 
		U(a,b;x) \sim \frac{\Gamma(b - 1)}{\Gamma (a)}x^{-b+1} \quad (x \to + 0,\ a> 0, \ b > 1) \label{}
		\end{align}
		and
		\begin{align}
		K_\alpha(x) \sim \sqrt{\frac{\pi}{2x}}\mathrm{e}^{-x}, \quad 
		U(a,b,x) \sim x^{-a} \quad (x \to +\infty,\ a> 0) \label{eq50}
		\end{align}
		(see e.g., \cite[Section 3.14.1]{Specialfunction}).
	\end{Rem}

	We now prove Theorem \ref{hitting-density-example}.

	\begin{proof}[Proof of Theorem \ref{hitting-density-example}]
		At first, we show \eqref{eq77} and \eqref{eq76}. 
		We denote the transition probability of $Y^{(\gamma)} = Y^{(\alpha, \beta, \gamma)}$ by
		\begin{align}
			\bP_x[Y^{(\gamma)}_t \in dy] = p^{(\gamma)}(t,x,y)dm^{(\gamma)}(y)
		\end{align}
		Then it holds 
		\begin{align}
		p^{(\gamma)}(t,x,y) = \mathrm{e}^{-\gamma t}\frac{p^{(0)}(t,x,y)}{g_\gamma(x)g_\gamma(y)}, \label{}
		\end{align}
		(see e.g., \cite[p.172]{TakemuraTomisaki:htransform}),
		where we write $\bP_x$ for the underlying probability measure for $Y^{(\gamma)}$ starting from $x$.
		From \cite[Appendix 1]{BMhandbook}, the transition density $p^{(0)}(t,x,y)$ is given as
		\begin{align}
		p^{(0)}(t,x,y) = \left\{
		\begin{aligned}
		&\frac{1}{t}(xy)^{\alpha/2}\mathrm{e}^{-(x+y)/t}I_\alpha\left(\frac{2\sqrt{xy}}{t}\right) & (\beta = 0), \\
		&\frac{\beta \mathrm{e}^{-\alpha\beta t/2}}{1 - \mathrm{e}^{-\beta t}}(xy)^{\alpha/2}\exp \left(-\frac{(x + y)\beta \mathrm{e}^{-\beta t}}{1 - \mathrm{e}^{-\beta t}}\right) I_\alpha \left(\frac{2\sqrt{xy}\beta \mathrm{e}^{-\beta t / 2}}{1 - \mathrm{e}^{-\beta t}}\right) & (\beta \neq 0), \label{}
		\end{aligned}
		\right.
		\end{align}
		where the function $I_\nu$ is the modified Bessel function of the first kind:
		\begin{align}
			I_\nu (x) = \sum_{n=0}^{\infty}\frac{1}{n!\Gamma(n + \nu + 1)}\left(\frac{x}{2}\right)^{\nu + 2n} \quad (\nu \in \bR, \ x \in \bR). \label{}
		\end{align}
		We now have
		\begin{align}
		\bP_x[T_0^{(\gamma)} > t] &= \int_{0}^{b}p^{(\gamma)}(t,x,y)dm^{(\gamma)}(y) \label{} \\
		&= \frac{\mathrm{e}^{-\gamma t}}{g_\gamma (x)}\int_{0}^{b}p^{(0)}(t,x,y)g_\gamma(y)dm^{(0)}(y) \label{} \\
		&= \frac{\mathrm{e}^{-\gamma t}}{g_\gamma (x)}\int_{0}^{b}p^{(0)}(t,x,y)dm^{(0)}(y)\int_{0}^{\infty}\mathrm{e}^{-\gamma u}f^{(0)}_y(u)du \label{} \\
		&= \frac{\mathrm{e}^{-\gamma t}}{g_\gamma (x)}\int_{0}^{\infty}\mathrm{e}^{-\gamma u}du
		 \int_{0}^{b}p^{(0)}(t,x,y)f^{(0)}_y(u)dm^{(0)}(y) \label{} \\
		&= \frac{\mathrm{e}^{-\gamma t}}{g_\gamma (x)}\int_{0}^{\infty}\mathrm{e}^{-\gamma u}
		\bP_{x}[\bP_{Y^{(0)}_t}[T^{(0)}_0 \in du]] \label{} \\
		&= \frac{\mathrm{e}^{-\gamma t}}{g_\gamma (x)}\int_{0}^{\infty}\mathrm{e}^{-\gamma u} f_x(u + t)du \label{} \\
		&= \frac{1}{g_\gamma (x)}\int_{t}^{\infty}\mathrm{e}^{-\gamma u} f^{(0)}_x(u)du. \label{} 
		\end{align}
		This shows \eqref{eq77}.
		Then from Proposition \ref{hitting_density} we obtain \eqref{eq76}.
		
		From \cite[p.173]{TakemuraTomisaki:htransform} we have \eqref{eq80}. 
		We show \eqref{eq81}. First we consider the case $\beta > 0$.
		By some computation, we can check that
		\begin{align}
		\psi_{\lambda}(x) = \frac{1}{\alpha}x^{\alpha}M(\lambda/\beta + \alpha,1 + \alpha; \beta x ) \quad (x > 0,\  \lambda \in \bR), \label{eq82}
		\end{align}
		where the function $M$ is Kummer's confluent hypergeometric function:
		\begin{align}
			M(a,b;x) = \sum_{n = 0}^{\infty}\frac{(a)_n x^n}{(b)_n n!} \quad (a,b \in \bR, \ x \in \bR). \label{}
		\end{align}
		We consider the values of $\lambda$ for which the function $\psi_{\lambda}$ is square-integrable. We may assume $\lambda < 0$.
		Since the asymptotic behavior of the function $M$ is given by
		\begin{align}
		M(a,b;x) \sim \frac{\Gamma(b)}{\Gamma(a)} x^{a-b}\mathrm{e}^{x} \quad (x \to \infty) \label{}
		\end{align}
		for $a \neq 0,-1,-2,\cdots$ (see e.g., \cite[p.289]{Specialfunction}), the function $\psi_{\lambda}$ is not square-integrable w.r.t.\ $dm$ when $\lambda/\beta + \alpha \neq 0,-1,-2,\cdots$. When $\lambda/\beta + \alpha = 0,-1,-2,\cdots$, the function $\psi_{\lambda}$ is a polynomial and obviously square-integrable w.r.t.\ $dm$. 
		Note that
		\begin{align}
		M(-n,1 + \alpha; \beta x) = \frac{n!}{(1 + \alpha)_n}L^{(\alpha)}_n(\beta x), \label{}
		\end{align}
		where $L^{(\alpha)}_n(x)$ is the $n$-th Laguerre polynomial of parameter $\alpha$, that is,
		\begin{align}
		L^{(\alpha)}_n (x) = \mathrm{e}^{x}\frac{x^{-\alpha}}{n!}\frac{d^n}{dx^n}(\mathrm{e}^{-x}x^{n + \alpha}) \quad (n \in \bN) \label{}
		\end{align}
		(see e.g., \cite[p.241]{Specialfunction}).
		Since the Laguerre polynomials $\{ L^{(\alpha)}_n(x) \}_n$ comprise the orthogonal basis of $L^2((0,\infty), x^{\alpha}\mathrm{e}^{-x}dx)$, the functions $\{\psi_{-\beta(\alpha + n)}(x)\}$ is so on $L^2((0,\infty),x^{-\alpha}\mathrm{e}^{-\beta x}dx)$.
		Hence the spectral measure only have the point spectrum and the support of $\sigma$ is $\{ \beta (\alpha + n), \  n \geq 0 \}$.
		Since it holds 
		\begin{align}
		\int_{0}^{\infty}L^{(\alpha)}_i(x)L^{(\alpha)}_j(x)x^{\alpha}\mathrm{e}^{-x}dx = \delta_{ij}\frac{\Gamma(i + \alpha+ 1)}{i!} \quad (i,j \in \bN ) \label{}
		\end{align}
		(see e.g., \cite[p.241]{Specialfunction}),
		it follows
		\begin{align}
		\int_{0}^{\infty}\psi_{-\beta(\alpha + n)}(x)^2dm(x) 
		&= \frac{(n!)^2}{\alpha^2\beta^{\alpha+1} \{ (1 + \alpha)_n \}^2}\int_{0}^{\infty}L^{(\alpha)}_n(x)^2x^{\alpha}\mathrm{e}^{-x}dx \label{} \\
		&= \frac{n!\Gamma(\alpha)}{\beta^{\alpha+1}(\alpha)_{n+1}}. \label{}
		\end{align}
		Hence we obtain
		\begin{align}
		\sigma\{ \beta (n + \alpha) \} = \frac{\beta^{\alpha+1}(\alpha)_{n+1}}{n!\Gamma(\alpha)} \quad (n \geq 0). \label{spectralmeasure}
		\end{align}	
		
		Next we show the case $\beta < 0$.
		Let us consider the map
		\begin{align}
			L^2((0,\infty),dm^{(\alpha,-\beta)}) \ni f \longmapsto \mathrm{e}^{\beta x}f \in L^2((0,\infty),dm^{(\alpha,\beta)}). \label{eq83}
		\end{align}
		Obviously, this map is unitary.
		Moreover, since it holds
		\begin{align}
			\cL^{(\alpha,\beta)}(\mathrm{e}^{\beta x}\psi_{\lambda}^{(\alpha,-\beta)}(x)) = (\lambda - \beta (\alpha -1))(\mathrm{e}^{\beta x}\psi_{\lambda}^{(\alpha,-\beta)}(x)) \label{}
		\end{align}
		and
		\begin{align}
			\frac{d}{ds^{(\alpha,\beta)}}(\mathrm{e}^{\beta x}\psi^{(\alpha,-\beta)}_{\lambda}(x)) = \beta x^{1-\alpha}\psi^{(\alpha,-\beta)}_{\lambda}(x) + \mathrm{e}^{\beta x}\frac{d}{ds^{(\alpha,-\beta)}}\psi_{\lambda}^{(\alpha,-\beta)}(x), \label{}
		\end{align}
		 we can see from \eqref{eq82} that
		\begin{align}
			\psi_{\lambda}^{(\alpha,\beta)}(x) = \mathrm{e}^{\beta x}\psi^{(\alpha,-\beta)}_{\lambda + \beta (\alpha - 1)}, \label{}
		\end{align}
		where we denote the function defined in \eqref{eq63} for $\cL^{(\alpha,\beta)}$ by $\psi_{\lambda}^{(\alpha,\beta)}$.
		Then from the unitarity of the map \eqref{eq83} and the argument for the case $\beta > 0$, the functions $\{ \psi_{-\beta(n + 1)}^{(\alpha,\beta)}, n \geq 0 \}$ comprise the orthogonal basis of $L^2((0,\infty), dm^{(\alpha,\beta)})$ and therefore we obtain \eqref{eq81} for $\beta < 0$.
		
		Finally, we show the case $\beta = 0$.
		Note that we can see from some computation that
		\begin{align}
			\psi_{\lambda}(x) = \Gamma(\alpha) \left(\frac{x}{\lambda}\right)^{\alpha/2}I_\alpha(2\sqrt{\lambda x}) \quad (x > 0, \ \lambda \in \bR). \label{}
		\end{align}
		From \eqref{eq20} and \eqref{eq76}, we have
		\begin{align}
			\int_{0}^{\infty}\mathrm{e}^{-\lambda t}\psi_{-\lambda}(x)\sigma^{(0)}(d\lambda)
			= \frac{1}{\Gamma(\alpha)}x^{\alpha}t^{-\alpha-1}\mathrm{e}^{-x/t}.
		\end{align}
		Since it holds that
		\begin{align}
			\frac{d}{dx}(x^\nu I_\nu(x)) = x^\nu I_{\nu-1}(x), \quad I_\nu(x) \sim \frac{\mathrm{e}^{x}}{\sqrt{2\pi x}} \quad (\nu \in  \bR,\ x \to \infty) \label{}
		\end{align}
		(see e.g., \cite[p.67, p.139]{Specialfunction}), we can see
		\begin{align}
			\frac{d}{dx}\int_{0}^{\infty}\mathrm{e}^{-\lambda t}\left|\frac{d}{dx}\psi_{-\lambda}(x)\right|\sigma^{(0)}(d\lambda) < \infty \quad (x > 0). \label{}
		\end{align}
		Thus we have
		\begin{align}
			\int_{0}^{\infty}\mathrm{e}^{-\lambda t}\sigma^{(0)}(d\lambda) 
			=& \left.\frac{d}{ds(x)}\int_{0}^{\infty}\mathrm{e}^{-\lambda t}\psi_{-\lambda}(x)\sigma^{(0)}(d\lambda)\right|_{x = 0} \label{} \\
			=& \left.\frac{d}{ds(x)}\frac{1}{\Gamma(\alpha)}x^{\alpha}t^{-\alpha-1}\mathrm{e}^{-x/t}\right|_{x = 0} \label{} \\
			=& \frac{\alpha t^{-\alpha - 1}}{\Gamma(\alpha)}. \label{}
		\end{align}
		From the uniqueness of the Laplace transform, we obtain \eqref{eq81}.

	\end{proof}
	We give the classification of the boundary $\infty$ for $\cL^{(\gamma)}$:
	\begin{Prop}\label{classification_of_boundary}
		For $\alpha > 0, \ \beta \in \bR,\ \gamma \geq 0$, the boundary $\infty$ for $\cL^{(\gamma)}$ is natural. 
	\end{Prop}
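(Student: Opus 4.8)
The boundary $\infty$ is natural, by the definition recalled in the Feller classification, precisely when both
\[
I(\infty)=\int_d^\infty ds^{(\gamma)}(x)\int_x^\infty dm^{(\gamma)}(y)=\infty
\qquad\text{and}\qquad
J(\infty)=\int_d^\infty dm^{(\gamma)}(x)\int_x^\infty ds^{(\gamma)}(y)=\infty .
\]
The plan is to verify these two conditions separately for $\cL^{(\gamma)}=\frac{d}{dm^{(\gamma)}}\frac{d}{ds^{(\gamma)}}$, using the explicit densities $dm^{(\gamma)}=g_\gamma^2\,x^{-\alpha}\mathrm{e}^{-\beta x}dx$ and $ds^{(\gamma)}=g_\gamma^{-2}\,x^{\alpha-1}\mathrm{e}^{\beta x}dx$ coming from \eqref{speedmeasure_scalefunction} and \eqref{h-transformdensity}. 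The single structural identity driving the argument is that the product of the two densities is independent of $\gamma$ and of the $h$-transform:
\[
\frac{dm^{(\gamma)}}{dx}(x)\cdot\frac{ds^{(\gamma)}}{dx}(x)=\frac{1}{x}\qquad(x>0).
\]

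First I would show that $s^{(\gamma)}(\infty):=\int^\infty ds^{(\gamma)}=\infty$ in each of the five cases of \eqref{cases}, reading off the large-$x$ asymptotics of $g_\gamma$ from the Remark following Theorem \ref{hitting-density-example} (together with $g_0\equiv1$ when $\beta\ge0,\ \gamma=0$). In every case $g_\gamma^{-2}x^{\alpha-1}\mathrm{e}^{\beta x}$ grows to $\infty$: like $\mathrm{e}^{\beta x}$ up to a power in Case 2, like $\mathrm{e}^{-\beta x}$ up to a power in Cases 3 and 3$'$, like $\mathrm{e}^{4\sqrt{\gamma x}}$ up to a power in Case 1, and like $x^{\alpha-1}$ in Case 1$'$. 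Once $s^{(\gamma)}(\infty)=\infty$, the inner integral $\int_x^\infty ds^{(\gamma)}=+\infty$ for every $x$, so $J(\infty)=\infty$ is immediate; equivalently, $s^{(\gamma)}(\infty)=\infty$ already forces $\infty$ to be either entrance or natural, and the whole problem reduces to ruling out the entrance alternative, i.e.\ to proving $I(\infty)=\infty$.

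For $I(\infty)$ I would rewrite it by Tonelli's theorem over the region $\{d<x<y<\infty\}$ as
\[
I(\infty)=\int_d^\infty\bigl(s^{(\gamma)}(y)-s^{(\gamma)}(d)\bigr)\,dm^{(\gamma)}(y),
\]
which has the crucial advantage of involving only the scale function, whose asymptotics are directly available; no speed-measure tail estimate is needed. Substituting $dm^{(\gamma)}(y)=\frac{dy}{y\,s^{(\gamma)}{}'(y)}$ from the product identity, and using $s^{(\gamma)}(y)-s^{(\gamma)}(d)\ge\tfrac12 s^{(\gamma)}(y)$ for all large $y$ (legitimate since $s^{(\gamma)}(\infty)=\infty$), it suffices to show
\[
\int^\infty \frac{s^{(\gamma)}(y)}{y\,s^{(\gamma)}{}'(y)}\,dy=\int^\infty\frac{dy}{y\,(\log s^{(\gamma)})'(y)}=\infty .
\]
This estimate is the main obstacle, and it is exactly where the $g_\gamma$-asymptotics re-enter: they show that $\log s^{(\gamma)}(y)$ grows at most linearly, of order $|\beta|y$ in Cases 2, 3, 3$'$, of order $4\sqrt{\gamma y}$ in Case 1, and of order $\alpha\log y$ in Case 1$'$. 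Hence $s^{(\gamma)}(y)/s^{(\gamma)}{}'(y)$ is of order $1$, $\sqrt{y}$, or $y$ respectively, the integrand is bounded below by a constant multiple of $1/y$, $1/\sqrt{y}$, or a positive constant, and in every case the integral diverges.

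Finally I would isolate the one degenerate situation not reached by this tail argument: in Case 1$'$ with $\alpha\le1$ one has $m^{(\gamma)}(d,\infty)=\int_d^\infty x^{-\alpha}dx=\infty$, so $\int_x^\infty dm^{(\gamma)}=\infty$ for every $x$ and $I(\infty)=\infty$ holds trivially (here $s^{(\gamma)}{}'=x^{\alpha-1}$ is not increasing, so the reduction above is bypassed). Collecting $I(\infty)=J(\infty)=\infty$ across all five cases shows that $\infty$ is natural for $\cL^{(\gamma)}$. The computations in the generic cases are routine Laplace/Watson-type asymptotics once the formulas for $g_\gamma$ are in hand; the only point requiring genuine care is the uniform bookkeeping that $\log s^{(\gamma)}$ never grows faster than linearly, which is precisely what guarantees the non-integrability excluding the entrance case.
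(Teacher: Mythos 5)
Your strategy (prove $s^{(\gamma)}(\infty)=\infty$, deduce $J(\infty)=\infty$, then rule out the entrance alternative by showing $I(\infty)=\infty$ via the identity $\frac{dm^{(\gamma)}}{dx}\frac{ds^{(\gamma)}}{dx}=\frac1x$) is sound in spirit and, where it works, reproduces the paper's computation, which likewise reduces $I(\infty)$ to $\int^\infty\frac{dx}{x}=\infty$; the product identity is a nice way to organize it. But there is a genuine gap: your premise is false in Case 3$'$ ($\beta<0$, $\gamma=0$). Under the paper's conventions $g_0\equiv1$ there, so $ds^{(0)}(x)=x^{\alpha-1}\mathrm{e}^{\beta x}dx$ \emph{decays} exponentially and $s^{(0)}(\infty)<\infty$ — the paper states this explicitly in the proof of Proposition \ref{NSCforQSDforKum} (``It holds $s^{(0)}(\infty)<\infty$ from \eqref{speedmeasure_scalefunction}''). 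Your assertion that the scale density grows ``like $\mathrm{e}^{-\beta x}$ up to a power in Cases 3 and 3$'$'' borrows the asymptotics \eqref{eq62}, where the factor $\mathrm{e}^{\beta x}$ inside $g_\gamma$ yields $\mathrm{e}^{-2\beta x}$ in $g_\gamma^{-2}$; but that factor is only present for $\gamma>0$, and indeed your own parenthetical defines $g_0\equiv1$ only for $\beta\geq0$, leaving Case 3$'$ unjustified. In that case both halves of your argument collapse: $J(\infty)=\infty$ no longer follows from an infinite scale, and your reduction of $I(\infty)$ uses $s(y)-s(d)\geq\tfrac12 s(y)$, which also needs $s(\infty)=\infty$.

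The repair is easy and your product identity makes it clean: for $\beta<0$, $\gamma=0$ one has $dm^{(0)}(y)=y^{-\alpha}\mathrm{e}^{-\beta y}dy$, so $m^{(0)}(x,\infty)=\infty$ for every $x$ and $I(\infty)=\infty$ trivially; moreover by L'H\^opital $s^{(0)}(\infty)-s^{(0)}(x)\sim\frac{1}{-\beta}x^{\alpha-1}\mathrm{e}^{\beta x}$, whence $dm^{(0)}(x)\bigl(s^{(0)}(\infty)-s^{(0)}(x)\bigr)\asymp\frac{dx}{(-\beta)x}$ and $J(\infty)=\infty$. Two smaller points. First, the ``degenerate situation'' you isolate (Case 1$'$ with $\alpha\leq1$) is actually covered by your main argument, since there $s(y)/(y\,s'(y))\asymp1$; the case that genuinely needed special treatment was 3$'$, which you missed. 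Second, your inference ``$\log s^{(\gamma)}$ grows at most linearly, hence $s/s'$ is of order $1$, $\sqrt{y}$, or $y$'' runs backwards: growth of $\log s$ alone does not bound $s'/s$ pointwise. What you actually need — and what holds — is the direct computation of $s/s'$ from the explicit asymptotics of $g_\gamma$ together with L'H\^opital, exactly as the paper does in the case $\beta>0$.
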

		
	\begin{proof}
		Let $\beta > 0$. From \eqref{eq62} and \eqref{eq50}, we have
		\begin{align}
			s(x) - s(1) &= \int_{1}^{x}y^{\alpha - 1}\mathrm{e}^{\beta y}\frac{dy}{g_\gamma^2(y)} \label{} \\
			&\asymp \int_{1}^{x}y^{\alpha + 2\gamma / \beta- 1}\mathrm{e}^{\beta y}dy \xrightarrow[x \to \infty]{} \infty, \label{}
		\end{align}
		where $f_1 \asymp f_2$ means there exists a constant $c > 0$ such that
		$(1/c)f_1(x) \leq f_2(x) \leq c f_1(x)$ for large $x > 0$.
		Note that from L'H\^opital's rule, it holds for $\delta \in \bR$
		\begin{align}
			\int_{x}^{\infty}y^{\delta}\mathrm{e}^{-\beta y}dy \sim \frac{1}{\beta}x^{\delta}\mathrm{e}^{-\beta x} \quad (x \to \infty). \label{}
		\end{align}
		We have
		\begin{align}
			\int_{1}^{\infty}ds^{(\gamma)}(x)\int_{x}^{\infty}dm^{(\gamma)}(y) 
			&\asymp \int_{1}^{\infty}x^{\alpha + 2\gamma / \beta -1}\mathrm{e}^{-\beta x}dx\int_{x}^{\infty}y^{-\alpha - 2\gamma / \beta}\mathrm{e}^{-\beta y}dy \label{} \\
			&\asymp \int_{1}^{\infty}\frac{dx}{x} = \infty. \label{}  
		\end{align}
		Thus the boundary $\infty$ is natural.
		We can show the cases of $\beta = 0$ and $\beta < 0$ by the similar argument and hence we omit them. 
	\end{proof}

	\section{Convergence to non-minimal quasi-stationary distributions for Kummer diffusions with negative drifts} \label{section:convtoqsd}
	
	Let us apply Theorem \ref{main-theorem-03} to Kummer diffusions with negative drifts and give a sufficient condition on initial distributions under which the conditional process converges to each non-minimal quasi-stationary distribution specified. 
	
	We give a necessary and sufficient condition for Kummer diffusions with negative drifts to satisfy the condition of Theorem \ref{QSD-char01}:
	
	\begin{Prop} \label{NSCforQSDforKum}
		For $\cL^{(\alpha,\beta,\gamma)} \ (\alpha > 0, \beta \in \bR, \ \gamma \geq 0)$,
		the condition of Theorem \ref{QSD-char01} holds if and only if one of the Case 1-3 in \eqref{cases} holds.
	\end{Prop}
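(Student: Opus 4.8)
The plan is to reduce everything to the characterization in Theorem~\ref{QSD-char01}. Since the boundary $\infty$ is natural for every $\cL^{(\gamma)}$ by Proposition~\ref{classification_of_boundary}, the condition of that theorem is exactly \eqref{eq39}, namely $\lambda_0^{(\gamma)} > 0$ together with $s^{(\gamma)}(\infty) = \infty$. The five cases in \eqref{cases} exhaust all pairs $(\beta,\gamma)$ with $\beta \in \bR$, $\gamma \geq 0$, so it suffices to check that both requirements hold in Cases 1--3 and that at least one of them fails in each of Case 1$'$ and Case 3$'$.

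First I would dispose of the spectral-bottom requirement using the explicit value of $\lambda_0^{(\gamma)}$ recorded in Theorem~\ref{hitting-density-example}. For $\beta > 0$ we have $\lambda_0^{(\gamma)} = \alpha\beta + \gamma > 0$, for $\beta < 0$ we have $\lambda_0^{(\gamma)} = -\beta + \gamma > 0$, and for $\beta = 0$ we have $\lambda_0^{(\gamma)} = \gamma$, which is positive precisely when $\gamma > 0$. Hence $\lambda_0^{(\gamma)} > 0$ in every case except Case 1$'$ ($\beta = 0$, $\gamma = 0$), which already shows that Case 1$'$ violates \eqref{eq39}.

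The substantive work is to decide when $s^{(\gamma)}(\infty) = \infty$, starting from $ds^{(\gamma)}(x) = g_\gamma(x)^{-2}x^{\alpha-1}\mathrm{e}^{\beta x}dx$, which follows from \eqref{h-transformdensity} and \eqref{speedmeasure_scalefunction}. When $\gamma = 0$ we have $g_0 \equiv 1$, so $ds^{(0)}(x) = x^{\alpha-1}\mathrm{e}^{\beta x}dx$ and $\int^{\infty}x^{\alpha-1}\mathrm{e}^{\beta x}dx$ diverges for $\beta \geq 0$ but converges for $\beta < 0$; this is exactly the dichotomy separating Case 3 from Case 3$'$, and it shows that Case 3$'$ fails $s(\infty) = \infty$. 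When $\gamma > 0$ I would insert the large-$x$ asymptotics of $g_\gamma$ from \eqref{eq62} and \eqref{eq50} into $ds^{(\gamma)}$: for $\beta = 0$, $g_\gamma(x) \asymp x^{\alpha/2 - 1/4}\mathrm{e}^{-2\sqrt{\gamma x}}$ yields $ds^{(\gamma)}(x) \asymp x^{-1/2}\mathrm{e}^{4\sqrt{\gamma x}}dx$; for $\beta > 0$, $g_\gamma(x) \asymp x^{-\gamma/\beta}$ yields $ds^{(\gamma)}(x) \asymp x^{\alpha - 1 + 2\gamma/\beta}\mathrm{e}^{\beta x}dx$; and for $\beta < 0$, $g_\gamma(x) \asymp x^{\alpha - 1 + \gamma/\beta}\mathrm{e}^{\beta x}$ yields $ds^{(\gamma)}(x) \asymp x^{1 - \alpha - 2\gamma/\beta}\mathrm{e}^{-\beta x}dx$. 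In each of these three subcases the integrand grows exponentially near $\infty$ (through $\mathrm{e}^{4\sqrt{\gamma x}}$, $\mathrm{e}^{\beta x}$, and $\mathrm{e}^{-\beta x}$ respectively), so $s^{(\gamma)}(\infty) = \infty$, which together with Step two confirms \eqref{eq39} in Cases 1--3.

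The main obstacle I anticipate is the asymptotic analysis of $g_\gamma$ for $\beta \neq 0$: one must combine $U(a,b;z) \sim z^{-a}$ with the prefactors $(\pm\beta x)^\alpha \mathrm{e}^{\beta x}$ so that the $\gamma/\beta$-shift in the exponent of $x$ emerges correctly, and one must check $a>0$ so that \eqref{eq50} applies (which holds since $a = \alpha+\gamma/\beta>0$ for $\beta>0$ and $a = 1-\gamma/\beta \geq 1$ for $\beta<0$). The genuinely delicate point is that the $\gamma \to 0$ limit of these asymptotics is discontinuous—$g_0 \equiv 1$, whereas the $\gamma > 0$ asymptotics carry the factor $\mathrm{e}^{\beta x}$—which is precisely why the $\gamma = 0$ cases must be treated on their own and why Case 3$'$ parts company from Case 3. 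Once the asymptotics are secured, the convergence or divergence of the scale integrals is immediate.
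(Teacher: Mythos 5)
Your proof is correct, and it takes a genuinely different route from the paper's. The paper never touches the spectral bottom directly: instead of verifying \eqref{eq39}, it verifies the equivalent condition \eqref{eq42}, computing the tails of \emph{both} $m^{(\gamma)}$ and $s^{(\gamma)}$ from the asymptotics \eqref{eq62} and \eqref{eq50}, and checking $m^{(\gamma)}(1,\infty)<\infty$ together with $\limsup_{x\to\infty} s^{(\gamma)}(x)\,m^{(\gamma)}(x,\infty)<\infty$ in Cases 1--3 (the product is $\asymp 1/x$ for $\beta\neq 0$ and $\asymp 1$ for $\beta=0,\ \gamma>0$), while in Case 1$'$ that product diverges and in Case 3$'$ one has $s^{(0)}(\infty)<\infty$. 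You instead check \eqref{eq39} itself, importing the explicit values $\lambda_0^{(\gamma)}$ from Theorem \ref{hitting-density-example} and computing only the scale-function tails; this is legitimate, since Theorem \ref{hitting-density-example} is established before this proposition, so there is no circularity, and your asymptotics for $ds^{(\gamma)}$ agree with the paper's (including the $\beta<0$ case, where the paper's displayed integrand $y^{-\alpha-\gamma/\beta}$ appears to be a typo, but its conclusion $s^{(\gamma)}(x)-s^{(\gamma)}(1)\asymp x^{1-\alpha-2\gamma/\beta}\mathrm{e}^{-\beta x}$ matches your computation). What your route buys is economy: half of the paper's computation (the speed-measure tails) is replaced by citing the spectral bottoms, and the case split $\lambda_0>0$ versus $s(\infty)=\infty$ cleanly explains \emph{why} Case 1$'$ and Case 3$'$ fail for different reasons. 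What the paper's route buys is self-containedness: condition \eqref{eq42} is purely an $m$--$s$ criterion, so the proof never leans on the spectral-measure computations (Laguerre expansions, Hankel-type analysis) behind the values of $\lambda_0^{(\gamma)}$. A minor additional merit of your write-up is the explicit appeal to Proposition \ref{classification_of_boundary} to ensure the boundary $\infty$ is natural, a hypothesis of Theorem \ref{QSD-char01} that the paper's proof leaves implicit.
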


	\begin{proof}
		Let $\beta > 0$. Obviously, it holds $m^{(\gamma)}(1,\infty) < \infty$ and $s^{(\gamma)}(\infty) = \infty$.
		From \eqref{eq50}, we have
		\begin{align}
			m^{(\gamma)}(x,\infty) (s^{(\gamma)}(x) - s^{(\gamma)}(1))
			&\asymp (x^{-\alpha -2\gamma /\beta}\mathrm{e}^{-\beta x})(x^{\alpha + 2\gamma /\beta - 1}\mathrm{e}^{\beta x}) \label{} \\
			&\asymp 1/x \xrightarrow[x \to \infty]{} 0. \label{}
		\end{align}
		
		Let $\beta = 0$.
		We can easily check $s^{(\gamma)}(\infty) = \infty$ for $\gamma \geq 0$ and
		\begin{align}
			\lim_{x \to \infty} m^{(0)}(x,\infty)(s^{(0)}(x) - s^{(0)}(1)) = \infty. \label{}   
		\end{align}
		For $\gamma > 0$, from \eqref{eq50} it holds
		\begin{align}
			m^{(\gamma)}(x,\infty)(s^{(\gamma)}(x) - s^{(\gamma)}(1)) 
			\asymp \mathrm{e}^{-4\sqrt{\gamma x}} \cdot \mathrm{e}^{4\sqrt{\gamma x}} = 1. \label{}
		\end{align}

		Let $\beta < 0$. It holds $s^{(0)}(\infty) < \infty$ from \eqref{speedmeasure_scalefunction}. For $\gamma > 0$, we have from \eqref{eq62}
		\begin{align}
			s^{(\gamma)}(x) - s^{(\gamma)}(1) &\asymp \int_{1}^{x}y^{-\alpha - \gamma / \beta }\mathrm{e}^{-\beta y}dy \label{} \\
			&\asymp x^{1 - \alpha - 2\gamma / \beta} \mathrm{e}^{-\beta x} \xrightarrow[x \to \infty]{} \infty. \label{}
		\end{align}
		Similarly, we can show $m(1,x) \asymp x^{-2 + \alpha + 2\gamma / \beta }\mathrm{e}^{\beta x}$ and thus $m(1,\infty) < \infty$. 
		Then it holds
		\begin{align}
			m(x,\infty)(s(x) - s(1)) \asymp 1/x \xrightarrow[x \to \infty]{}. \label{}
		\end{align}		
	\end{proof}
	
	For the process $Y^{(\alpha,\beta,\gamma)}$, the first hitting uniqueness holds on $\cP(0,\infty)$. We show this fact in more general settings as follows:
	
	\begin{Thm}
		Let $X$ be a $\frac{d}{dm}\frac{d}{ds}$-diffusion on $[0,b) \ ( 0 < b \leq \infty)$ and $s(b) = \infty$.
		Suppose the hitting densities $f_x(t)$ of $0$ have the following form
		\begin{align}
		f_x(t) = u(x)w(t)\mathrm{e}^{-v(x)y(t)} \quad (0 < x < b, \ t > 0) \label{eq46}
		\end{align}
		for some strictly positive functions $u(x)$ and $v(x)$ on $(0,b)$ and some strictly positive function $w(t)$ and $y(t)$ on $(0,\infty)$,
		In addition, suppose $v$ is strictly increasing continuous and $y(0,\infty) = (0,\infty)$.
		Then the first hitting uniqueness holds on $\cP(0,\infty)$.
	\end{Thm}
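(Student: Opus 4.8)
The plan is to exploit the very special form \eqref{eq46}: the factor $\mathrm{e}^{-v(x)y(t)}$ will let me read $f_\mu$ as a Laplace transform in the variable $\lambda:=y(t)$ of a measure canonically attached to $\mu$, and then classical injectivity of the Laplace transform, together with injectivity of the push-forward by the strictly increasing map $v$, will recover $\mu$.

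First I would recall that, the hitting densities $f_x$ being given by Proposition \ref{hitting_density}, Fubini's theorem shows that the law of $T_0$ under $\bP_\mu$ has density $f_\mu(t)=\int_{(0,b)}f_x(t)\,\mu(dx)$ for $\mu\in\cP(0,\infty)$. Hence two initial distributions $\mu_1,\mu_2$ with the same hitting law satisfy $f_{\mu_1}(t)=f_{\mu_2}(t)$ for (almost) every $t>0$. Substituting \eqref{eq46} and cancelling the strictly positive factor $w(t)$, this becomes
\begin{align}
\int_{(0,b)}u(x)\,\mathrm{e}^{-v(x)y(t)}\,\mu_1(dx)=\int_{(0,b)}u(x)\,\mathrm{e}^{-v(x)y(t)}\,\mu_2(dx).
\end{align}
I would then introduce, for $i=1,2$, the push-forward $\rho_i:=v_*(u\,\mu_i)$, a $\sigma$-finite Radon measure on $(0,\infty)$ (here I use $v>0$ and $v$ strictly increasing), so that the above integrals are exactly $\int_{(0,\infty)}\mathrm{e}^{-\xi\lambda}\rho_i(d\xi)$ evaluated at $\lambda=y(t)$; these are finite for large $\lambda$ since $f_{\mu_i}(t)<\infty$ a.e. Because $y$ is surjective onto $(0,\infty)$, the two Laplace transforms of $\rho_1,\rho_2$ agree on all of $(0,\infty)$, whence $\rho_1=\rho_2$ by uniqueness of the Laplace transform. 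Finally, since $v$ is continuous and strictly increasing it is a Borel isomorphism onto its range, so $v_*$ is injective on measures; thus $u\,\mu_1=u\,\mu_2$, and dividing by the strictly positive $u$ gives $\mu_1=\mu_2$, i.e.\ the first hitting uniqueness on $\cP(0,\infty)$.

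The step that requires the most care is the passage from the equality of hitting densities, which a priori holds only for Lebesgue-almost every $t$, to an equality of the inner $x$-integrals at enough values $\lambda=y(t)$ to determine the Laplace transform; a measurable-but-irregular $y$ could a priori send a co-null set of $t$ to a small set of $\lambda$. I would resolve this using regularity: from the spectral representation \eqref{eq20} under condition $\mathrm{(S)}$ the hitting densities are continuous in $t$, so in fact $f_{\mu_1}=f_{\mu_2}$ holds for \emph{every} $t>0$, and then surjectivity of $y$ yields equality of the two Laplace transforms on the whole of $(0,\infty)$. (Equivalently, one may invoke the real-analyticity of Laplace transforms on $(0,\infty)$ and extend the equality from any set of $\lambda$'s possessing an accumulation point.) The remaining points—finiteness of the transforms and $\sigma$-finiteness of $\rho_i$—are routine, using that each $\mu_i$ is a probability measure and $\int_0^\infty f_{\mu_i}(t)\,dt\le 1$.
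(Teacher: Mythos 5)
Your proposal is correct and follows essentially the same route as the paper's proof: substitute the product form, cancel the positive factor $w(t)$, push the measure $u(x)\,\mu(dx)$ forward by the strictly increasing map $v$, invoke surjectivity of $y$ together with uniqueness of the Laplace transform, and finally divide by the strictly positive $u$. The only differences are cosmetic: the paper works with the signed measure $\mu_1-\mu_2$ rather than two push-forwards, and it dispatches the almost-everywhere-versus-everywhere issue by remarking that $f_x(t)/w(t)$ is continuous in $t$, where you instead appeal to condition $\mathrm{(S)}$ (not among the theorem's hypotheses, though harmless and satisfied in the paper's applications).
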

	
	\begin{proof}
		Suppose $\mu_1$ and $\mu_2 \in \cP(I)$ satisfy
		\begin{align}
		\bP_{\mu_1}[T_0 \in dt] = \bP_{\mu_2}[T_0 \in dt] \label{}
		\end{align}
		and set $\mu = \mu_1 - \mu_2$. It holds
		\begin{align}
		\int_{0}^{b}f_x(t)\mu(dx) = 0 \quad (t > 0). \label{eq64}
		\end{align}
		Note that from the continuity of $f_x(t) / w(t)$ w.r.t.\ $t$, the equality \eqref{eq64} holds for every $t > 0$.
		From \eqref{eq46} and by a change of variables, we have
		\begin{align}
		0 = \int_{v(0)}^{v(b)}u(v^{-1}(x))\mathrm{e}^{-xy(t)}\mu(v^{-1}(dx)). \label{}
		\end{align} 
		Since $y(0,\infty) = (0,\infty)$, it holds from the uniqueness of the Laplace transform
		\begin{align}
		u(x)\mu(dx) = 0 \quad \text{on} \ (0,b). \label{}
		\end{align}
		Since $u(x) > 0$, we obtain the desired result. 
	\end{proof}
	
	Now we go on to the proof of Theorem \ref{thm:doaofKum}.
	For the proof of (i) of Theorem \ref{thm:doaofKum}, we need the following lemma, which enables us to cut off the integral region for the asymptotic behavior of the Laplace transform:
	\begin{Lem}\label{laplaceprinciple}
		Let $f: (0,\infty) \to [0,\infty)$ and assume
		\begin{align}
			\log f(x) \sim \delta \sqrt{x} \quad (x \to \infty) \label{eq65}
		\end{align}
		for $\delta > 0$ and
		\begin{align}
			\int_{0}^{\infty}\mathrm{e}^{-x/t}f(x)dx < \infty  \quad (t > 0). \label{}
		\end{align}
		Then for every $\eps > 0$, it holds
		\begin{align}
			\log \int_{0}^{\infty}\mathrm{e}^{-x/t}f(x)dx \sim \frac{\delta^2}{4}t \label{}
		\end{align}
		and
		\begin{align}
			\int_{0}^{\infty}\mathrm{e}^{-x/t}f(x)dx \sim \int_{(\delta^2/4 - \eps)t^2}^{(\delta^2/4 + \eps)t^2}\mathrm{e}^{-x/t}f(x)dx \quad (t \to \infty). \label{}
		\end{align}
	\end{Lem}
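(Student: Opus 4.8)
The plan is to read this as a Laplace-type (saddle-point) estimate. Write $F(t) = \int_0^\infty \mathrm{e}^{-x/t} f(x)\,dx$. By \eqref{eq65} the integrand behaves like $\exp(-x/t + \delta\sqrt{x})$, and the exponent $\phi_t(x) = -x/t + \delta\sqrt{x}$ is maximized at $x^*(t) = \delta^2 t^2/4$ with maximal value $\phi_t(x^*(t)) = \tfrac{\delta^2}{4}t$. This single computation already explains both assertions: the value $\tfrac{\delta^2}{4}t$ is the claimed logarithmic growth rate, and $x^*(t) = \delta^2 t^2/4$ is the centre of the localizing window. The task is to make this rigorous using only the one-sided information $\log f(x) \sim \delta\sqrt{x}$.

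First I would record two reductions. Since $F(t) < \infty$, the bound $\mathrm{e}^{-x/t} \geq \mathrm{e}^{-X/t}$ on $(0,X)$ shows $f$ is integrable on every bounded interval, so $\int_0^X f$ is a finite constant for each fixed $X$; and from \eqref{eq65}, for every $\eta \in (0,\delta)$ there is $X_\eta$ with $(\delta - \eta)\sqrt{x} \leq \log f(x) \leq (\delta + \eta)\sqrt{x}$ for $x \geq X_\eta$. The engine of the argument is the scalar estimate, valid for every $c > 0$,
\begin{align}
\int_0^\infty \exp\rbra{-\tfrac{x}{t} + c\sqrt{x}}\,dx = \exp\rbra{\tfrac{c^2}{4}t + o(t)} \quad (t \to \infty),
\end{align}
which I would prove by substituting $x = u^2$ and completing the square, $-u^2/t + cu = -\tfrac1t(u - ct/2)^2 + \tfrac{c^2}{4}t$; the resulting Gaussian integral (mean $ct/2$, variance of order $t$) together with the polynomial factor $2u$ contributes only $\exp(o(t))$, leaving the exponential factor $\exp(\tfrac{c^2}{4}t)$. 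For the logarithmic asymptotics I would then sandwich $F$: splitting at $X_\eta$ and using the upper bound on $f$ gives $F(t) \leq \int_0^{X_\eta} f + \int_{X_\eta}^\infty \exp(-x/t + (\delta+\eta)\sqrt{x})\,dx$, whence $\limsup_t t^{-1}\log F(t) \leq (\delta+\eta)^2/4$, while discarding all but a window around $(\delta-\eta)^2 t^2/4$ and using the lower bound on $f$ gives $\liminf_t t^{-1}\log F(t) \geq (\delta-\eta)^2/4$. Letting $\eta \downarrow 0$ proves the first assertion.

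For the localization I would write $F = L + M + U$, where $M$ is the integral over $[(\delta^2/4 - \eps)t^2,\ (\delta^2/4 + \eps)t^2]$ and $L, U$ are the lower and upper tails, and show $L, U = o(F)$; since $F = M + L + U$ this yields $M \sim F$. After $x = u^2$ the exponent becomes the downward parabola $g(u) = -u^2/t + (\delta+\eta)u$ peaking at $u = (\delta+\eta)t/2$. On the upper tail $u \geq \sqrt{\delta^2/4 + \eps}\,t$, which for $\eta$ small lies to the right of the peak, $g$ is decreasing, so $g \leq g(\sqrt{\delta^2/4+\eps}\,t)$; evaluating and letting $\eta \downarrow 0$ the exponent is at most $t$ times $-(\delta^2/4 + \eps) + \delta\sqrt{\delta^2/4 + \eps}$, and squaring shows this is strictly below $\delta^2/4$ with a gap of order $\eps^2$. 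The polynomial prefactor costs only $\exp(o(t))$, so $U = \exp((\tfrac{\delta^2}{4} - c(\eps))t + o(t))$ with $c(\eps) > 0$. The lower tail is symmetric: there the peak lies to the right of the window, $g$ is increasing up to $\sqrt{\delta^2/4 - \eps}\,t$, one evaluates at that right endpoint (the piece $\int_0^{X_\eta}f$ being an $O(1)$ constant), and the same squaring trick again produces a gap of order $\eps^2$. Comparing with $F = \exp((\tfrac{\delta^2}{4} + o(1))t)$ from the first part gives $L, U = o(F)$, and hence $M \sim F$.

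The main obstacle is precisely this quantitative separation of the tails from the peak: one must check that at the two window endpoints the exponent drops strictly below $\delta^2/4$, and then manage the interplay of the two small parameters, taking $\eta$ small relative to $\eps$ so that the surviving gap (of order $\eps^2$) persists in the limit $\eta \downarrow 0$. The scalar Gaussian estimate and the convexity of $g$ are routine; the care lies in this bookkeeping.
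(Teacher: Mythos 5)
Your proof is correct, but it reaches the first assertion by a genuinely different route than the paper. The paper does not prove $\log\int_0^\infty \mathrm{e}^{-x/t}f(x)\,dx \sim \frac{\delta^2}{4}t$ by hand: it first reduces to $f \equiv 0$ near $0$, obtains the tail bound by absorbing the error of \eqref{eq65} into a single function $h(x) = \log(x^2 f(x))/\sqrt{x} - \delta$ and completing the square in $\varphi_t(x) = x/t - (\delta + h(x))\sqrt{x}$ (exactly your saddle-point computation, with your two-sided $\eta$-sandwich replaced by the one error term $h$), and then imports the logarithmic asymptotics of the full integral from the regular-variation literature: \cite[Theorem 4.12.10 (ii)]{Regularvariation} converts \eqref{eq65} into $\log\int_0^x f(y)\,dy \sim \delta\sqrt{x}$, and Kohlbecker's Tauberian theorem \cite[Theorem 4.12.1]{Regularvariation} then yields the exponential rate $\frac{\delta^2}{4}t$. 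Your elementary two-sided Gaussian sandwich replaces the Tauberian step entirely, which buys self-containedness (no exponential Tauberian machinery) at the cost of the $\eps$--$\eta$ bookkeeping you describe; the paper's citation makes the proof shorter and pushes all such bookkeeping into the quoted theorems. The structure of the localization step — tails exponentially subdominant, gap of order $\eps^2$ — is the same in both.

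One step of yours needs tightening: on the upper tail you write that $g$ is decreasing past the peak, "so $g \leq g\bigl(\sqrt{\delta^2/4+\eps}\,t\bigr)$," and then integrate. A pointwise constant bound is not enough there, since $\int_{u_+}^\infty 2u\,du$ diverges; you must retain decay in $u$, e.g.\ keep the Gaussian form $g(u) = -\frac{1}{t}\bigl(u - \tfrac{(\delta+\eta)t}{2}\bigr)^2 + \tfrac{(\delta+\eta)^2}{4}t$ and use a Gaussian tail estimate, or use concavity to write $g(u) \leq g(u_+) + g'(u_+)(u - u_+)$ with $g'(u_+) \leq -c < 0$ uniformly in $t$. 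Either fix is immediate from what you have already set up, so this is a presentational gap rather than a missing idea.
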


	\begin{proof}
		Since it holds
		\begin{align}
			\lim_{t \to \infty}\int_{0}^{1}\mathrm{e}^{-x/t}f(x)dx < \infty \quad \text{and} \quad \lim_{t \to \infty}\int_{1}^{\infty}\mathrm{e}^{-x/t}f(x)dx = \infty, \label{}
		\end{align}
		we may assume without loss of generality that $f(x) = 0$ for $0 < x < 1$.
		It is enough to show
		\begin{align}
			\lim_{t \to \infty}\frac{\int_{1}^{(\delta^2/4 - \eps)t}\mathrm{e}^{-x/t}f(x)dx}{\int_{1}^{\infty}\mathrm{e}^{-x/t}f(x)dx} = 0 \label{eq66}
		\end{align}
		and
		\begin{align}
			\lim_{t \to \infty}\frac{\int_{(\delta^2/4 + \eps)t}^{\infty}\mathrm{e}^{-x/t}f(x)dx}{\int_{1}^{\infty}\mathrm{e}^{-x/t}f(x)dx} = 0. \label{eq67}
		\end{align}
		Let 
		\begin{align}
			h(x) = \frac{\log (x^2f(x))}{\sqrt{x}} - \delta \quad (x > 1). \label{}
		\end{align}
		Then from \eqref{eq65}, we have $\lim_{x \to \infty}h(x) = 0$.
		It follows
		\begin{align}
			\int_{1}^{\infty}\mathrm{e}^{-x/t}f(x)dx = \int_{1}^{\infty}\mathrm{e}^{-\varphi_t(x)}\frac{dx}{x^2} \label{}
		\end{align}
		where
		\begin{align}
			\varphi_t(x) = x/t - (\delta + h(x))\sqrt{x}  
		\end{align}
		Note that
		\begin{align}
			\varphi_t(x) = \frac{1}{t}\left( \sqrt{x} - \frac{\delta + h(x)}{2}t \right)^2 - \frac{(\delta + h(x))^2}{4}t. \label{}
		\end{align}
		Let $\theta := \delta / 2  - \sqrt{\delta^2 / 4 - \eps} > 0$ and take $R > 1$ so that
		\begin{align}
			|h(x)| < \theta \quad \text{and} \quad \frac{2\delta |h (x)| + h(x)^2}{4} < \theta^2 / 8 \quad (x > R). 
		\end{align}
		Then for $R < x < (\delta^2/4 - \eps) t^2$, it follows
		\begin{align}
			\frac{\delta + h(x)}{2}t - \sqrt{x} 
			> \frac{\delta + h(x)}{2}t - t\sqrt{\delta^2/4 - \eps}			
			> \frac{\theta}{2}t \label{}
		\end{align}
		and thus
		\begin{align}
			\varphi_t(x) \geq (\theta^2 / 8 - \delta^2 / 4)t. \label{}
		\end{align}
		Then it follows
		\begin{align}
			\int_{R}^{(\delta^2/4 - \eps) t^2}\mathrm{e}^{-\varphi_t(x)}\frac{dx}{x^2}
			&\leq \mathrm{e}^{( \delta^2 /4 - \theta^2 / 8)t}\int_{R}^{(\delta^2/4 - \eps) t^2}\frac{dx}{x^2} \label{} \\
			&\leq \mathrm{e}^{(\delta^2 /4 - \theta^2 / 8 )t}. \label{}
		\end{align}
		For showing \eqref{eq66}, it is hence enough to show 
		\begin{align}
			\log \int_{1}^{\infty}\mathrm{e}^{-x/t}f(x)dx \sim \frac{\delta^2}{4}t \quad (t \to \infty). \label{eq68}
		\end{align}
		From \cite[Theorem 4.12.10 (ii)]{Regularvariation}, it holds
		\begin{align}
			\log \int_{0}^{x}f(y)dy \sim \delta \sqrt{x} \quad (x \to \infty). \label{}
		\end{align}
		From Kohlbecker's Tauberian Theorem \cite[Theorem 4.12.1]{Regularvariation}, we obtain therefore \eqref{eq68}.
		We can show \eqref{eq67} by a similar argument.
	\end{proof}
	
	Now we proceed to the proof of Theorem \ref{thm:doaofKum}.
	\begin{proof}[Proof of Theorem \ref{thm:doaofKum}]
		First we show (i). From Proposition \ref{non-minimalQSD_density} and Theorem \ref{hitting-density-example}, it is enough to show
		\begin{align}
			\lim_{t \to \infty}\frac{d}{dt} \log \int_{0}^{\infty}\mathrm{e}^{-x/t}\frac{x^{\alpha/2}}{K_\alpha (2\sqrt{\gamma x})}\mu(dx) = \delta^2/4. \label{}
		\end{align}
		From \eqref{eq50}, it holds
		\begin{align}
			\log \tilde{\rho}(x) := \log \frac{x^{\alpha/2}\rho(x)}{K_\alpha (2\sqrt{\gamma x})} \sim \delta \sqrt{x} \quad (x \to \infty). \label{eq70}
		\end{align}
		Take $\eps > 0$. Since it holds
		\begin{align}
			\frac{d}{dt} \log \int_{0}^{\infty}\mathrm{e}^{-x/t}\frac{x^{\alpha/2}}{K_\alpha (2\sqrt{\gamma x})}\mu(dx)
			= \frac{\int_{0}^{\infty}\mathrm{e}^{-x/t}x\tilde{\rho}(x)dx}{t^2 \int_{0}^{\infty}\mathrm{e}^{-x/t}\tilde{\rho}(x)dx}, \label{}
		\end{align}
		we have from \eqref{eq70} and Lemma \ref{laplaceprinciple}
		\begin{align}
			\frac{\int_{0}^{\infty}\mathrm{e}^{-x/t}x\tilde{\rho}(x)dx}{t^2 \int_{0}^{\infty}\mathrm{e}^{-x/t}\tilde{\rho}(x)dx}
			\sim \frac{\int_{(\delta^2/4 - \eps)t^2}^{(\delta^2/4 + \eps)t^2}\mathrm{e}^{-x/t}x\tilde{\rho}(x)dx}{t^2 \int_{(\delta^2/4 - \eps)t^2}^{(\delta^2/4 + \eps)t^2}\mathrm{e}^{-x/t}\tilde{\rho}(x)dx} \label{}
		\end{align}
		and obviously we have
		\begin{align}
			\int_{(\delta^2/4 - \eps)t^2}^{(\delta^2/4 + \eps)t^2}\mathrm{e}^{-x/t}x\tilde{\rho}(x)dx 
			\lesseqgtr (\delta^2/4 \pm \eps)t^2 \int_{(\delta^2/4 - \eps)t^2}^{(\delta^2/4 + \eps)t^2}\mathrm{e}^{-x/t}\tilde{\rho}(x)dx. \label{}
		\end{align}
		Since $\eps > 0$ can be taken arbitrary small, we obtain
		\begin{align}
			\frac{\int_{0}^{\infty}\mathrm{e}^{-x/t}x\tilde{\rho}(x)dx}{t^2 \int_{0}^{\infty}\mathrm{e}^{-x/t}\tilde{\rho}(x)dx} \xrightarrow{t \to \infty} \delta^2/4. \label{}
		\end{align}

		Next we show (ii).
		From the proof of Proposition \ref{non-minimalQSD_density}, it is enough to show that
		the function $f_\mu(\log t)$ varies regularly at $\infty$ with exponent $-\lambda$.
		From Theorem \ref{hitting-density-example}, we have
		\begin{align}
			f_\mu(\log t) = \frac{1}{\Gamma(\alpha)}t^{\beta -\gamma}h(t)^{1 + \alpha}\int_{0}^{\infty}\frac{x^\alpha}{g_\gamma(x)}\mathrm{e}^{-h(t)x}\mu(dx), \label{}
		\end{align}
		where
		\begin{align}
			h(t) = \frac{\beta}{t^{\beta} - 1} \quad (t > 1). \label{}
		\end{align}
		The inverse function $h^{-1}$ of $h$ is given as
		\begin{align}
			h^{-1}(s) = \left(1 + \frac{\beta}{s} \right)^{1/\beta} \quad (s > 0). \label{}
		\end{align}
		Note that the function $h^{-1}(s)$ varies regularly at $s = 0$ with exponent $-1/\beta$.
		By considering the function $f(\log h^{-1}(s))$, it follows that the function $f_\mu(\log t)$ varies regularly at $t = \infty$ with exponent $-\lambda$
		if and only if the function
		\begin{align}
			\int_{0}^{\infty}\frac{x^\alpha}{g_\gamma(x)}\mathrm{e}^{-sx}\mu(dx) \label{eq69}
		\end{align}
		varies regularly at $s = 0$ with exponent $-\alpha - (\gamma -\lambda)/\beta$.
		From Karamata's Tauberian Theorem \cite[Theorem 1.7.1]{Regularvariation}, it is equivalent to that the function
		\begin{align}
			\int_{0}^{x}\frac{y^{\alpha}}{g_\gamma(y)}\mu(dy)
		\end{align}
		varies regularly at $x = \infty$ with exponent $\alpha + (\gamma - \lambda) / \beta$.
		Then from \eqref{eq50} and \cite[Theorem 1.6.4]{Regularvariation}, it is equivalent to the function $\mu(x,\infty)$ varies regularly at $x = \infty$ with exponent $-\lambda / \beta$, and therefore we obtain (ii).
		
		Finally, we show (iii).
		The proof of this case is quite similar to that of (ii).
		From Theorem \ref{hitting-density-example}, we have
		\begin{align}
		f_\mu(\log t) = \frac{1}{\Gamma(\alpha)}t^{\beta -\gamma}h(t)^{1 + \alpha}\int_{0}^{\infty}\frac{x^\alpha}{g_\gamma(x)}\mathrm{e}^{-h(t)x}\mu(dx). \label{}
		\end{align}
		Note that for $\beta < 0$, it holds $\lim_{t \to \infty}h(t) = -\beta$.
		Then the function $f_\mu(\log t)$ varies regularly at $t = \infty$ with exponent $-\lambda$ if and only if the function 
		\begin{align}
			\int_{0}^{\infty}\frac{x^\alpha}{g_\gamma(x)}\mathrm{e}^{-h(t)x}\mu(dx)
			= \frac{(-\beta)^{-\alpha}\Gamma(\alpha)}{\Gamma(1 - \gamma / \beta)}\int_{0}^{\infty}\frac{\mathrm{e}^{-(h(t) + \beta)x}}{U(1-\gamma/\beta, \alpha + 1; -\beta x)}\mu(dx) \label{eq84}
		\end{align}
		varies regularly at $t = \infty$ with exponent $-\lambda -\beta + \gamma$.
		Note that the function $h^{-1}(s)$ varies regularly at $s = -\beta + 0$ with exponent $-1/\beta$. Thus, by denoting $u = s + \beta$, the regular variation at $t = \infty$ of \eqref{eq84} with exponent $-\lambda - \beta + \gamma$ is equivalent to that at $u = 0$ of 
		\begin{align}
			\int_{0}^{\infty}\frac{\mathrm{e}^{-ux}}{U(1-\gamma/\beta, \alpha + 1; -\beta x)}\mu(dx) \label{}
		\end{align}
		with exponent $1 + (\lambda - \gamma) / \beta$.
		Using \eqref{eq50}, the rest of the proof can be made by the same argument in (ii) and hence we omit it.
		The proof is complete.
	\end{proof}

	\appendix

	\bibliography{Bibliography.bib} 
	\bibliographystyle{plain}

\end{document}